\theoremstyle{plain}
\newtheorem{thm}{Theorem}[section]
\newtheorem{prop}[thm]{Proposition}
\newtheorem{lemma}[thm]{Lemma}
\newtheorem{cor}[thm]{Corollary}
\newtheorem{pro}[thm]{Problem}
\theoremstyle{definition}
\newtheorem{ex}[thm]{Example}
\newtheorem{defi}[thm]{Definition}
\newtheorem{rem}[thm]{Remark}
\newcommand{\pa}[2]{\ensuremath{\stackrel{#1}{#2}}}
\newcommand{\mc}[1]{\ensuremath{\mathcal{#1}}}
\newcommand{\mb}[1]{\ensuremath{\mathbb{#1}}}
\newcommand{\ra}{\rightarrow}
\newcommand{\xra}{\xrightarrow}
\newcommand{\mono}{\hookrightarrow}
\newcommand{\ol}[1]{\ensuremath{\overline{#1}}}
\newcommand{\wh}[1]{\ensuremath{\widehat{#1}}}
\newcommand{\ul}[1]{\ensuremath{\underline{#1}}}
\newcommand{\pgl}[1]{\ensuremath{\mb{P}GL(#1)}}
\newcommand{\mker}{\ensuremath{{\rm Ker}}}
\newcommand{\mdim}{\ensuremath{{\rm dim}}}
\newcommand{\stab}{\ensuremath{{\rm Stab}}}
\newcommand{\PP}{\mathbb{P}}
\newcommand{\CC}{\mathbb{C}}
\begin{document}

\title{Dense $\mb{P}GL$-orbits in products of Grassmannians}
\author[I. Coskun]{Izzet Coskun}
\author[M. Hadian]{Majid Hadian}
\address{University of Illinois at Chicago, Department of Mathematics, Stat. \& CS, 851 S Morgan St, Chicago IL 60607}
\email{coskun@math.uic.edu}
\email{hadian@math.uic.edu}
\author[D. Zakharov]{Dmitry Zakharov}
\address{Courant Institute of Mathematical Sciences, 251 Mercer Street, New York, NY 10012}
\email{dvzakharov@gmail.com}
\date{}
\thanks{During the preparation of this article the first author  was partially supported by the NSF CAREER grant DMS-0950951535, and an Alfred P. Sloan Foundation Fellowship}
\subjclass[2010]{Primary: 14L30, 14M15, 14M17 Secondary: 14L35, 51N30}
\keywords{Grassmannians, $\pgl{n}$ actions, dense orbits}

\maketitle


\begin{abstract}

In this paper, we find some necessary and sufficient conditions on the dimension vector $\ul{\bf{d}} = (d_1, \dots, d_k; n)$ so that the diagonal action of $\pgl{n}$ on $\prod_{i=1}^k Gr(d_i;n)$ has a dense orbit. Consequently, we obtain some algorithms for finding dense and sparse dimension vectors and classify dense dimension vectors with small length or size. We also classify dimension vectors where $|d_i - d_j|< 3$ for all $i,j$ generalizing a theorem of Popov \cite{p}.

\end{abstract}

\tableofcontents


\section{Introduction}\label{sec-intro}

Let $\{ p_1, p_2, p_3 \}$ and $\{ q_1, q_2, q_3 \}$ be two ordered sets of distinct points on $\mb{P}^1$. Then there exists a unique M\"obius transformation $M \in \pgl{2}$ such that $M(p_i) = q_i$ for $1 \leq i \leq 3$. Hence, the diagonal action of $\pgl{2}$ on $\mb{P}^1 \times \mb{P}^1 \times \mb{P}^1$ has a dense orbit consisting of distinct triples. More generally, given two ordered sets $\{ p_1, \dots, p_{n+2} \}$ and $\{ q_1, \dots, q_{n+2} \}$ of $n+2$ points in general linear position in $\mb{P}^n$, there exists a unique element $M \in \pgl{n+1}$ such that $M(p_i) = q_i$ for $1 \leq i \leq n+2$ (see \cite[Section 1.6]{har92}). In this paper, we consider a natural generalization of this classical fact.

Let $V$ be an $n$-dimensional vector space. Then any ordered set $S = \{ U_1, \dots, U_k \}$ of linear subspaces of $V$ corresponds to a point in $\prod_{i=1}^k Gr(d_i; n)$, where $d_i$ denotes the dimension of $U_i$ for $1 \leq i \leq k$. Therefore, the $GL(n)$-action on $V$ induces a $\pgl{n}$ action on $\prod_{i=1}^k Gr(d_i;n)$. 

\begin{pro}\label{q:main}
For which dimension vectors $(d_1, \dots, d_k; n)$ does the diagonal action of $\pgl{n}$ have a dense orbit in $\prod_{i=1}^k Gr(d_i;n)$?
\end{pro}
Problem \ref{q:main} was posed to us by J\'anos Koll\'ar. This problem and several variations have a long history.  Let $X$ be an irreducible algebraic variety admitting a non-trivial action of a connected algebraic group $G$. The density of the diagonal action of $G$ on $X^m$  has been studied by Vladimir Popov in \cite{p}.  In particular, he characterized the values of $m$ with a dense orbit for the standard action of $G$ on $X=G/P$, where $P$ is a maximal parabolic subgroup and $G$ is a simple linear algebraic group.  Popov in \cite{p2} studied the relation of the problem to representation theory. Many authors have studied variants of the problem. For instance,  Magyar, Weyman and Zelevinsky have classified instances when the action of $G$ on the product has finitely many orbits \cite{MWZ1} (see also \cite{MWZ2} and \cite{perrin}, \cite{kimelfeld}, \cite{kv}, \cite{ponomareva} for related literature).

In this paper, we prove reduction lemmas that allow to reduce the density of a dimension vector to one with smaller ambient dimension under suitable assumptions. We work over an algebraically closed field of arbitrary characteristic. For many dimension vectors, our results give an efficient algorithm for checking the density of the $\pgl{n}$ action (see Section \ref{s:fq}).
Using our results, we can  characterize the dimension vectors of length $k \leq 4$ that have a dense $\pgl{n}$ orbit (Theorem \ref{t:sl}) and the dimension vectors with $d_i \leq 4$ (for all $i$) that have a dense $\pgl{n}$ orbit (Section \ref{sec-size}). We also characterize dense dimension vectors where $|d_i - d_j|<3$ (Theorem \ref{t:ed}). The latter generalizes  Popov's  \cite[Theorem 3]{p} in type $A$.

If $\pgl{n}$ acts with a dense orbit on $\prod_{i=1}^k Gr(d_i;n)$, then the dimension of $\pgl{n}$ has to be greater than or equal to the dimension of $\prod_{i=1}^k Gr(d_i;n)$. We thus obtain a necessary inequality
\begin{equation} \label{e:tdc}
\sum_{i=1}^k d_i (n-d_i) \leq n^2 -1.
\end{equation}
However, as the following example shows, this inequality is not sufficient.

\begin{ex}\label{ex-1}
Consider the dimension vector $(1,1,2,2;3)$. Geometrically, this dimension vector represents a configuration $(p_1,p_2,l_1,l_2)$ consisting of a pair of points $(p_1,p_2)$ and a pair of lines $(l_1,l_2)$ in $\mb{P}^2$. Note that we have
$$\mdim(\mb{P}^2 \times \mb{P}^2 \times \mb{P}^{2*} \times \mb{P}^{2*}) = 8 = \mdim(\pgl{3}).$$
However, $\pgl{3}$ does not act with a dense orbit. Briefly, the points $p_1$ and $p_2$ span a line $l$ in $\mb{P}^2$ and the lines $l_1$ and $l_2$ intersect $l$ in two points $q_1$ and $q_2$. The cross-ratio of the four points $p_1$, $p_2$, $q_1$, and $q_2$ on $l$ is an invariant of the $\pgl{3}$ action. Furthermore, by fixing $p_1$, $p_2$, and $l_1$ and varying $l_2$, we can get every cross-ratio. Hence, all orbits of the $\pgl{3}$-action in this case have codimension at least $1$.
\end{ex}

There are several things to notice about Example \ref{ex-1}. First, it can be generalized to the dimension vector $(1,1,n-1,n-1;n)$. Geometrically, this dimension vector represents a configuration $(p_1, p_2, H_1, H_2)$ consisting of a pair of points $(p_1, p_2)$ and a pair of hyperplanes $(H_1, H_2)$ in $\mb{P}^{n-1}$. Again, the hyperplanes $H_1$ and $H_2$ intersect the line $l$ spanned by the points $p_1$ and $p_2$ in two points $q_1$ and $q_2$ and the cross-ratio of the four points $p_1$, $p_2$, $q_1$, and $q_2$ on $l$ is an invariant of the $\pgl{n}$ action. Therefore, $\pgl{n}$ does not act with a dense orbit in this case. On the other hand, $\mdim(\mb{P}^{n-1} \times \mb{P}^{n-1} \times \mb{P}^{n-1*} \times \mb{P}^{n-1*}) = 4(n-1)$ can be arbitrarily smaller than $\mdim(\pgl{n}) = n^2 -1$.

More importantly,  in all the above examples there is a smaller configuration of linear spaces (the points $p_1$, $p_2$, $q_1$, and $q_2$ on $l$) obtained by taking spans and intersections of the original linear spaces, that trivially cannot be dense, as it fails the inequality \eqref{e:tdc}.  This smaller configuration of linear spaces is the obstruction for the density of the original dimension vector. In this paper, we will produce many classes of examples which show that this phenomenon is typical. In fact, we expect that whenever $\pgl{n}$ fails to act with a dense orbit on $\prod_{i=1}^k Gr(d_i;n)$, there is a configuration of vector spaces, obtained by repeatedly taking spans and intersections of the original ones, which does not satisfy the inequality \eqref{e:tdc} and accounts for this failure.

Knowing the density of a dimension vector has many applications. For the applications we specialize our base field to $\CC$. Let $\lambda_1, \dots, \lambda_d$ be nonzero dominant characters of the maximal torus $T$ in the semisimple group $G$. Then $(\lambda_1, \dots, \lambda_d)$ is called {\em primitive} if for every non-negative $d$-tuple of integers $(n_1, \dots, n_d)$, the Littlewood-Richardson coefficient $c_{n_1 \lambda_1, \dots, n_d \lambda_d}^0\leq 1$. Popov in \cite[Theorem 1]{p2} proves that if $G$ has an open orbit on $G/P_{\lambda_1} \times \cdots \times G/P_{\lambda_d}$, then $(\lambda_1, \dots, \lambda_d)$ is primitive. Hence, for each vector that we prove that $\pgl{n}$ acts with dense orbit, we get a strong bound on Littlewood-Richardson coefficients.

Knowing the density also has important geometric applications. First, it allows one to choose convenient coordinates. Many geometric problems, such as enumerative problems and interpolation problems, become simpler to solve if the constraints have special coordinates. For example, it is easy to see that the unique quadric surface in $\PP^3$ that contains the three lines $x=y=0, z=w=0, x-z=y-w=0$ is $xw-yz=0$. Since $(2,2,2; 4)$ is dense and these three lines belong to the dense orbit (see proof of Theorem \ref{t:sl}), we conclude that  three general lines in $\PP^3$ impose independent conditions on quadrics in $\PP^3$ and there is a unique quadric surface containing  three general lines (see \cite{har92}). More generally,  $(k,k,k; 2k)$ is dense and $x_1 = \cdots = x_{k} = 0$, $x_{k+1} = \cdots = x_{2k}=0$ and $x_1-x_{k+1} = \cdots = x_k - x_{2k} =0$ is in the dense orbit of $\pgl{2k}$. It is then easy to see that there is a unique Segre image of $\PP^1 \times \PP^{k-1}$ containing these three $\PP^{k-1}$'s in $\PP^{2k-1}$ (see \cite{har92}). This simple classical calculation is the basis for the study of the genus zero Gromov-Witten invariants of Grassmannians (see \cite{coskun:LR}).

Second, knowing the density of  dimension vectors  allows one to determine automorphism groups of blowups of Grassmannians.  For example, the dimension vector $(2,2,2,2;5)$ is dense (see Theorem \ref{t:sl}). Hence,  any 4 general points in $G(2,5)$  can be taken to any other 4 general points by an action of $\pgl{5}$. By Lemma \ref{l:ml}, if a 4-tuple of points is in the dense orbit of $\pgl{5}$, then any ordering of the 4 points is also in the dense orbit. Hence, there is an $\mathfrak{S}_4$-symmetry of the blowup of $G(2,5)$ at 4 general points. More interestingly, the blowup $X$ of $G(2,5)$ at a general $\PP^3$ section under the Pl\"{u}cker embedding has an $\mathfrak{S}_5$-symmetry (arising from the $\mathfrak{S}_4$-symmetry) that plays an important role in the Kawamata-Morrison cone conjecture for the log Calabi-Yau variety $X$ (see \cite{coskun:artie}).

\smallskip

\noindent{\bf Organization of the paper:} In \S \ref{sec-setup}, we will prove a useful criterion for checking density in terms of dimensions of stabilizer groups. In \S \ref{sec-numerical}, we will collect some numerical lemmas. In \S \ref{s:rt}, we will prove several lemmas that allow us to reduce checking the density of a dimension vector to  simpler dimension vectors. In \S \ref{sec-length}, we will characterize dense dimension vectors of length at most four. In \S \ref{sec-size}, we will characterize dense dimension vectors of size at most four. In \S \ref{s:ed}, we characterize the dense vectors with $|d_i - d_j| < 3$. Finally, in \S \ref{s:fq}, we  discuss several additional examples and further questions.
\smallskip

\noindent{\bf Acknowledgments:} We would like to thank J\'{a}nos Koll\'{a}r for bringing the question to our attention. We are grateful to Clay Cordova, Samuel Grushevsky, Joe Harris, Brendan Hassett, J\'{a}nos Koll\'{a}r, Vladimir L. Popov and Ravi Vakil for many enlightening conversations. We thank the referee for invaluable suggestions and pointing us to many  useful references.


\section{Setup and the main lemma}\label{sec-setup}

In this section, we prove a basic criterion for the density of the $\pgl{n}$ action in terms of stabilizers.

Let $(U_1, \dots, U_k)$ be a configuration of linear subspaces of an $n$-dimensional vector space $V$. The corresponding dimension vector will be denoted by $\ul{\bf{d}} = (d_1, \dots, d_k; n)$, where $d_i$ is the dimension of $U_i$ for all $i$. The number of subspaces $k$ is called the {\em length} of the dimension vector $\ul{\bf{d}}$ and will be denoted by $l(\ul{\bf{d}})$. The value $\max_i{d_i}$ is called the {\em size} of $\ul{\bf{d}}$ and will be denoted by $| \ul{\bf{d}} |$.  The number $n$ is the {\em ambient dimension}, while the sum $\sum_{i=1}^k d_i$ is the {\it total dimension} and the difference $\sum_{i=1}^k d_i-n$ is the {\it excess dimension}. When convenient, we will express a dimension vector in exponential notation $\ul{\bf{d}} = (1^{e_1}, \dots, (n-1)^{e_{n-1}};n)$. Finally, the stabilizer of the point $(U_1, \dots, U_k) \in \prod_{i=1}^k Gr(d_i;n)$ will be denoted by $\stab(U_1, \dots, U_k)$.

\begin{defi}
We say that a dimension vector $\ul{\bf{d}} = (d_1, \dots, d_k;n)$ is  {\em dense} if the diagonal action of $\pgl{n}$ on $\prod_{i=1}^k Gr(d_i;n)$ has a Zariski dense orbit. Otherwise, $\ul{\bf{d}}$ is {\em sparse}. Furthermore, we say $\ul{\bf{d}}$ is {\em trivially sparse} if it does not satisfy the dimension inequality \eqref{e:tdc}.
\end{defi}

The following basic lemma is going to be the main tool in this article.

\begin{lemma} \label{l:ml}
Let $(U_1, \dots, U_k) \in \prod_{i=1}^k Gr(d_i;n)$ be a tuple of vector spaces with dimension vector $\ul{\bf{d}} = (d_1, \dots, d_k;n)$. The $\pgl{n}$ orbit of this point is dense in $\prod_{i=1}^k Gr(d_i;n)$ if and only if
$$\mdim(\stab(U_1, \dots, U_k)) = (n^2 - 1) - \sum_{i=1}^k d_i(n-d_i).$$
\end{lemma}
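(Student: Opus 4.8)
The plan is to rephrase density as a statement about the dimension of the orbit and then invoke the orbit–stabilizer dimension formula. Write $X = \prod_{i=1}^k Gr(d_i;n)$ and let $O \subseteq X$ be the $\pgl{n}$-orbit of the point $(U_1, \dots, U_k)$. Since each Grassmannian is irreducible, $X$ is irreducible, and $\dim X = \sum_{i=1}^k \dim Gr(d_i;n) = \sum_{i=1}^k d_i(n-d_i)$.

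First I would recall that the orbit of a point under an action of an algebraic group is a locally closed subvariety of $X$, and in particular is irreducible and open in its closure $\ol{O}$; hence $\dim O = \dim \ol{O}$. Because $\ol{O}$ is an irreducible closed subvariety of the irreducible variety $X$, we have $\ol{O} = X$ if and only if $\dim \ol{O} = \dim X$. Therefore $O$ is dense in $X$ if and only if $\dim O = \sum_{i=1}^k d_i(n-d_i)$.

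Next I would compute $\dim O$ via the orbit morphism $\mu \colon \pgl{n} \to X$, $g \mapsto g \cdot (U_1, \dots, U_k)$, whose image is $O$. The fibers of $\mu$ are exactly the cosets $g \cdot \stab(U_1, \dots, U_k)$, each of dimension $\mdim(\stab(U_1, \dots, U_k))$. Applying the fiber-dimension theorem to $\mu$ gives $\mdim(\pgl{n}) = \dim O + \mdim(\stab(U_1, \dots, U_k))$, so that $\dim O = (n^2 - 1) - \mdim(\stab(U_1, \dots, U_k))$. Combining this with the previous paragraph, $O$ is dense if and only if $(n^2-1) - \mdim(\stab(U_1, \dots, U_k)) = \sum_{i=1}^k d_i(n-d_i)$, which rearranges to the asserted identity.

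The main point requiring care is that the argument be valid over an algebraically closed field of arbitrary characteristic. In positive characteristic the orbit map $\mu$ may be inseparable, so the induced bijection $\pgl{n}/\stab \to O$ need not be an isomorphism of varieties; however, I would emphasize that only the purely dimension-theoretic conclusion is needed, and the fiber-dimension theorem holds in all characteristics. I would also note that $\stab(U_1, \dots, U_k)$ is a closed subgroup of $\pgl{n}$, so $\mdim(\stab(U_1, \dots, U_k))$ is well defined, and that the locally closedness of orbits is a standard feature of algebraic group actions on varieties.
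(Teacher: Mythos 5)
Your proposal is correct and follows essentially the same route as the paper: density is equivalent to the orbit having full dimension, and the orbit dimension is computed as $\mdim(\pgl{n}) - \mdim(\stab(U_1,\dots,U_k))$ via the orbit--stabilizer relation. Your use of the fiber-dimension theorem (and the remark on inseparability in positive characteristic) merely makes explicit the dimension count that the paper justifies by citing the bijection of the orbit with $G/H$ and \cite[I.1.8]{bor91}.
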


\begin{proof}
If an algebraic group $G$ acts on an irreducible projective variety $X$,  the orbit $G.x$ of any point $x\in X$ under $G$ is open in its Zariski closure $\ol{G.x}$ \cite[I.1.8]{bor91}. On the other hand, the orbit $G.x$ is in bijection with $G/H$, where $H$ is the stabilizer of $x$. Consequently, the dimension of the Zariski closure of the orbit of $x$ is
$$\mdim(\ol{G.x}) = \mdim(G) - \mdim(H).$$
Hence, the orbit $G.x$ is dense in $X$ if and only if $\mdim(X) = \mdim(G) - \mdim(H)$. Specializing to the case $G = \pgl{n}$ and $X = \prod_{i=1}^k Gr(d_i;n)$, we obtain the lemma since $\mdim(\pgl{n}) = n^2 -1$ and
$$\mdim\left(\prod_{i=1}^k Gr(d_i;n)\right) = \sum_{i=1}^k d_i(n-d_i).$$
\end{proof}

\begin{rem}
Observe that $\mdim(\stab(U_1, \dots, U_k)) \geq (n^2 - 1) - \sum_{i=1}^k d_i(n-d_i)$. A dimension vector $\ul{\bf{d}}$ is dense if there is a $k$-tuple $(U_1, \dots, U_k) \in \prod_{i=1}^k G(d_i;n)$ where equality is achieved.
\end{rem}

Let us conclude this section with the following two evident but useful observations.

\begin{defi}
Let $\ul{\bf{d}} = (d_1, \dots, d_k; n)$ be a dimension vector. Then the {\em complement} of \ul{\bf{d}} is the dimension vector $\ul{\bf{d}}^c = (n-d_1, \dots, n-d_k;n)$.
\end{defi}

The following lemma is well-known (see \cite[Lemma 2 and Corollary 1, 2]{E}  and \cite[\S2]{ST}). We include the  proof for the reader's convenience.

\begin{lemma} \label{l:c}
A dimension vector $\ul{\bf{d}}$ is dense if and only if the complement dimension vector $\ul{\bf{d}}^c$ is dense.
\end{lemma}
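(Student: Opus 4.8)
The plan is to exhibit an explicit, natural bijection between configurations with dimension vector $\ul{\bf{d}}$ and configurations with dimension vector $\ul{\bf{d}}^c$ that is equivariant for the $\pgl{n}$ action, and then conclude via Lemma \ref{l:ml}. The key observation is that passing to orthogonal complements (equivalently, annihilators) of subspaces reverses dimensions: if $U_i \subseteq V$ has dimension $d_i$, then its annihilator $U_i^\perp \subseteq V^*$ has dimension $n - d_i$. This gives a map sending $(U_1, \dots, U_k) \in \prod_i Gr(d_i; n)$ to $(U_1^\perp, \dots, U_k^\perp) \in \prod_i Gr(n - d_i; n)$, where now the ambient space is the dual $V^*$. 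The map is a bijection since taking annihilators twice recovers the original subspace (in the finite-dimensional setting $U^{\perp\perp} = U$).

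The main step is to track how the group actions interact under this duality. The action of $GL(V)$ on $V$ induces the contragredient action $g \mapsto (g^{-1})^T$ of $GL(V)$ on $V^*$, and this descends to $\pgl{n}$. First I would verify that if $g \in GL(V)$ sends $U_i$ to $g U_i$, then the contragredient $(g^{-1})^T$ sends $U_i^\perp$ to $(g U_i)^\perp$; this is the standard compatibility $\langle (g^{-1})^T \varphi, g u \rangle = \langle \varphi, u \rangle$. Consequently the annihilator map intertwines the $\pgl{n}$ action on $\prod_i Gr(d_i; n)$ with the $\pgl{n}$ action on $\prod_i Gr(n - d_i; n)$, after composing with the isomorphism $\pgl{n} \to \pgl{n}$ given by $g \mapsto (g^{-1})^T$ (an automorphism of $\pgl{n}$). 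Because this is an automorphism, it preserves the notion of a dense orbit.

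From here the conclusion is immediate. The annihilator map is an isomorphism of varieties, and it carries the $\pgl{n}$ orbit of $(U_1, \dots, U_k)$ onto the $\pgl{n}$ orbit of $(U_1^\perp, \dots, U_k^\perp)$; in particular it sends the stabilizer $\stab(U_1, \dots, U_k)$ isomorphically onto $\stab(U_1^\perp, \dots, U_k^\perp)$, so the two stabilizers have equal dimension. Since the Grassmannians $Gr(d_i; n)$ and $Gr(n - d_i; n)$ have the same dimension $d_i(n - d_i)$, the numerical criterion of Lemma \ref{l:ml} takes the identical form for $\ul{\bf{d}}$ and $\ul{\bf{d}}^c$. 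Thus one is dense exactly when the other is.

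I do not expect a serious obstacle here: the only point requiring care is bookkeeping the contragredient action and confirming that $g \mapsto (g^{-1})^T$ is a well-defined automorphism of $\pgl{n}$ (rather than merely of $GL(n)$), which follows since it fixes the center of scalar matrices. Everything else is a formal consequence of the self-duality of the dimension of Grassmannians together with Lemma \ref{l:ml}, so the argument is essentially a translation statement rather than a computation.
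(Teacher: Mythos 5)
Your proposal is correct and follows essentially the same route as the paper: both pass to the dual space $V^*$ and use the dimension-reversing duality on subspaces (the paper phrases it as $U \mapsto (V/U)^*$, you as the annihilator $U^\perp$, which are canonically the same), noting that this isomorphism of products of Grassmannians is compatible with the $\pgl{n}$ action up to the contragredient automorphism. Your write-up simply makes explicit the equivariance bookkeeping that the paper leaves implicit; the detour through Lemma \ref{l:ml} is harmless but unnecessary, since an equivariant isomorphism carries dense orbits to dense orbits directly.
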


\begin{proof}
Consider the ambient vector space $V$ and its dual $V^*$ with the dual $\pgl{n}$ action. Taking quotient spaces and passing to the dual defines an isomorphism $\prod_{i=1}^k Gr(d_i;n)$ and $\prod_{i=1}^k Gr(n-d_i;n)$, which respects the $\pgl{n}$ action. Therefore, $\pgl{n}$ has a dense orbit on one if and only if  it does on the other.
\end{proof}

\begin{defi}
We say that a dimension vector $\ul{\bf{d}} = (d_1, \dots, d_k;n)$ {\em dominates} a dimension vector $\ul{\bf{d}}' = (d'_1, \dots, d'_{k'};n)$ if for every $1 \leq d \leq n-1$ the number of times that $d$ appears in $\ul{\bf{d}}$ is greater than or equal to the number of times it appears in $\ul{\bf{d}}'$.
\end{defi}

The following is immediate.

\begin{lemma}\label{2.7}
Let $\ul{\bf{d}}$ and $\ul{\bf{d}}'$ be two dimension vectors such that $\ul{\bf{d}}$ dominates $\ul{\bf{d}}'$. Then if $\ul{\bf{d}}'$ is sparse, so is $\ul{\bf{d}}$. Equivalently, if $\ul{\bf{d}}$ is dense, so is $\ul{\bf{d}}'$.
\end{lemma}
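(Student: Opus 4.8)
The plan is to prove the contrapositive: assuming $\ul{\bf{d}}$ is dense, I would show that $\ul{\bf{d}}'$ is dense. The key structural observation is that domination means precisely that $\ul{\bf{d}}'$ can be realized as a \emph{subtuple} of $\ul{\bf{d}}$, so that forgetting the extra subspaces gives a $\pgl{n}$-equivariant surjection of products, and density can be transported along it.

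First I would reduce to a clean indexing. Since the variety $\prod_{i=1}^k Gr(d_i;n)$ and its diagonal $\pgl{n}$-action are canonically isomorphic under any permutation of the factors, density depends only on the multiset of dimensions; in particular I may reorder freely. The domination hypothesis says that for each $1 \le d \le n-1$ the dimension $d$ occurs at least as often in $\ul{\bf{d}}$ as in $\ul{\bf{d}}'$, so after reordering I may assume $\ul{\bf{d}} = (d'_1, \dots, d'_{k'}, d_{k'+1}, \dots, d_k; n)$, i.e.\ the entries of $\ul{\bf{d}}'$ occur among those of $\ul{\bf{d}}$. This identifies $\prod_{j=1}^{k'} Gr(d'_j;n)$ with the product over the first $k'$ factors of $\prod_{i=1}^k Gr(d_i;n)$.

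Next I would consider the projection $\pi\colon \prod_{i=1}^k Gr(d_i;n) \to \prod_{j=1}^{k'} Gr(d'_j;n)$ onto those first $k'$ factors. This $\pi$ is a surjective morphism of projective varieties, and it is $\pgl{n}$-equivariant because the action is diagonal. Writing $X$ and $Y$ for source and target and assuming $\ul{\bf{d}}$ dense, let $x \in X$ have dense orbit, so $\ol{\pgl{n}\cdot x} = X$. Equivariance gives $\pi(\pgl{n}\cdot x) = \pgl{n}\cdot \pi(x)$, and continuity of $\pi$ yields the chain $\ol{\pgl{n}\cdot \pi(x)} = \ol{\pi(\pgl{n}\cdot x)} \supseteq \pi(\ol{\pgl{n}\cdot x}) = \pi(X) = Y$, where the final equality uses surjectivity. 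Hence $\pgl{n}\cdot\pi(x)$ is dense in $Y$, so $\ul{\bf{d}}'$ is dense, which is the contrapositive of the desired implication.

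I do not expect a serious obstacle: the only two points needing care are the bookkeeping that domination genuinely realizes $\ul{\bf{d}}'$ as a subtuple of $\ul{\bf{d}}$ (handled by reordering, which is harmless for density), and the elementary topological fact that the image of a dense orbit under a surjective continuous equivariant map is again a dense orbit. As an alternative I could argue through Lemma \ref{l:ml} by comparing stabilizer dimensions of the full tuple and its subtuple, but the pushforward of orbits is cleaner and sidesteps any explicit stabilizer computation.
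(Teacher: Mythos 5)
Your proposal is correct, and it fills in exactly what the paper leaves unsaid: the paper states this lemma with no proof at all (``The following is immediate''), and your argument --- reordering so that $\ul{\bf{d}}'$ is a subtuple of $\ul{\bf{d}}$, then pushing a dense orbit forward along the $\pgl{n}$-equivariant surjective projection that forgets the extra factors, using $\pi(\ol{A}) \subseteq \ol{\pi(A)}$ --- is precisely the standard justification the authors had in mind. No gaps; the two points you flag (permutation invariance of density and the topological pushforward of dense orbits) are handled correctly.
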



\section{A numerical lemma}\label{sec-numerical}

In this section, we prove a numerical lemma which will be very useful in the next section in reducing the density problem of a dimension vector to a smaller one.

\begin{lemma} \label{l:nl}
Let $\ul{\bf{d}} = (d_1, \dots, d_k, d_{k+1}, d_{k+2}; n)$ be a dimension vector listed in increasing order $d_1 \leq d_2 \leq \cdots \leq d_{k+2}$ and with $| \ul{\bf{d}} | \leq \frac{n}{2}$. Then either \ul{\bf{d}} is trivially sparse or $\sum_{i=1}^k d_i < n$.
\end{lemma}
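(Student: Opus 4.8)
The plan is to argue by contraposition: assuming $\sum_{i=1}^k d_i \geq n$, I will show that $\ul{\bf{d}}$ violates the inequality \eqref{e:tdc} and is therefore trivially sparse. Since the left-hand side of \eqref{e:tdc} is an integer, being trivially sparse is equivalent to
$$\sum_{i=1}^{k+2} d_i(n - d_i) \geq n^2,$$
and this is the inequality I aim to establish. Throughout I write $S = \sum_{i=1}^k d_i$ and use that the hypothesis $|\ul{\bf{d}}| \leq \frac{n}{2}$ forces $d_i \leq \frac{n}{2}$ for every $i$, together with the ordering $d_1 \leq \cdots \leq d_{k+2}$.

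First I would isolate the contribution of the two largest entries $d_{k+1}, d_{k+2}$ from that of the first $k$ entries. For the first block I would write $\sum_{i=1}^k d_i(n - d_i) = nS - \sum_{i=1}^k d_i^2$ and bound $\sum_{i=1}^k d_i^2 \leq d_k \sum_{i=1}^k d_i = d_k S$, using that $d_i \leq d_k$ for $i \leq k$. This yields the lower bound $\sum_{i=1}^k d_i(n - d_i) \geq (n - d_k) S$.

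Next, for the two largest entries I would exploit that the parabola $f(x) = x(n - x)$ is increasing on $[0, \frac{n}{2}]$. Since $d_k \leq d_{k+1} \leq d_{k+2} \leq \frac{n}{2}$, both $f(d_{k+1})$ and $f(d_{k+2})$ are at least $f(d_k) = d_k(n - d_k)$, so the last two terms contribute at least $2 d_k(n - d_k)$. Combining the two blocks and using $S \geq n$ and $n - d_k > 0$ gives
$$\sum_{i=1}^{k+2} d_i(n - d_i) \geq (n - d_k) S + 2 d_k(n - d_k) = (n - d_k)(S + 2 d_k) \geq (n - d_k)(n + 2 d_k).$$
Finally, expanding the right-hand side as $n^2 + d_k(n - 2 d_k)$ and noting that $d_k \geq 0$ and $n - 2 d_k \geq 0$ (again from $|\ul{\bf{d}}| \leq \frac{n}{2}$) shows that the whole expression is at least $n^2$, as desired.

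I do not anticipate a serious obstacle, as the content is elementary. The one place demanding care is choosing the intermediate quantities so that everything factors cleanly: keeping $d_k$ as a free parameter and bounding $\sum d_i^2$ by $d_k S$ (rather than by the cruder $\frac{n}{2} S$) is exactly what makes the two blocks combine into the single factored form $(n - d_k)(S + 2 d_k)$, after which the size hypothesis $d_k \leq \frac{n}{2}$ delivers the clean bound $n^2$. As a sanity check, the all-ones vector $(1, \dots, 1; n)$ of length $n + 2$ nearly saturates the estimate, confirming that little slack remains in the bounds.
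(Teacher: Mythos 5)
Your proof is correct, and it takes a genuinely different route from the paper's. The paper proceeds by an extremal reduction: using that $x \mapsto x(n-x)$ is increasing on $[1,\frac{n}{2}]$, it first decreases entries so that $\sum_{i=1}^k d_i = n$ exactly and $d_k = d_{k+1} = d_{k+2} = |\ul{\bf{d}}|$, then applies a smoothing step (replacing two intermediate values $a \leq b$ by $a-1$, $b+1$, which strictly decreases $\sum_i d_i(n-d_i)$) to reduce to configurations consisting of $1$'s, copies of $|\ul{\bf{d}}|$, and at most one intermediate value, where the inequality $\sum_{i=1}^{k+2} d_i(n-d_i) \geq n^2$ is finally verified by an explicit computation split into two cases. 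You instead give a direct two-block estimate: $\sum_{i=1}^k d_i(n-d_i) = nS - \sum_{i=1}^k d_i^2 \geq (n-d_k)S$ since $\sum_{i=1}^k d_i^2 \leq d_k S$, while each of the last two terms is at least $d_k(n-d_k)$ by the same monotonicity of $x(n-x)$ on $[0,\frac{n}{2}]$; combining gives $(n-d_k)(S+2d_k) \geq (n-d_k)(n+2d_k) = n^2 + d_k(n-2d_k) \geq n^2$, where the needed signs $n-d_k > 0$ and $n-2d_k \geq 0$ both follow from the size hypothesis. Every step checks out, the contrapositive framing is the right logical structure, and $\geq n^2$ does negate \eqref{e:tdc}. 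Your argument buys brevity and robustness: no normalization of the sum to exactly $n$, no equalization of the top entries, and no case analysis. What the paper's approach buys is structural information: it identifies the near-extremal configurations (entries pushed to $1$ or to $|\ul{\bf{d}}|$) where the inequality is tightest, which is in the spirit of the classification arguments in the later sections. As a proof of the lemma as stated, yours is shorter and equally rigorous.
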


\begin{proof}
First, if all $d_i$ are  between $1$ and $\frac{n}{2}$, then $d_i(n-d_i)$ is a strictly increasing function of $d_i$. Hence, by decreasing the $d_i$'s if necessary, it suffices to show that if $\sum_{i=1}^k d_i = n$, then $\sum_{i=1}^{k+2} d_i(n-d_i) \geq n^2$. Second, by decreasing $d_{k+1}$ and $d_{k+2}$ if necessary, we can assume that $d_k = d_{k+1} = d_{k+2} = | \ul{\bf{d}} |$. Finally, if there are at least two dimension values $a \leq b$ strictly between $1$ and $| \ul{\bf{d}} |$, then replacing $a$ and $b$ by $a-1$ and $b+1$, respectively, changes the sum $\sum_{i=1}^{k+2} d_i(n-d_i)$ by
$$(a-1)(n-a+1) + (b+1)(n-b-1) - a(n-a) - b(n-b) = 2(a-b) - 2,$$
which is a negative number. Thus, we may assume that  $\ul{\bf{d}}$ consists of $r$-many $1$'s, $(s+2)$-many $| \ul{\bf{d}} |$'s (with $s \geq 1$), and at most one number $a$ between $1$ and $| \ul{\bf{d}} |$. Let $\epsilon \in \{ 0, 1 \}$ be the number of times $a$  appears in $\ul{\bf{d}}$.

Now, assuming $\sum_{i=1}^k d_i = r + \epsilon a + s| \ul{\bf{d}} |= n$, we have $\sum_{i=1}^{k+2} d_i = n + 2 | \ul{\bf{d}} |$, and thus
$$\sum_{i=1}^{k+2} d_i (n-d_i) = n^2 + 2n|\ul{\bf{d}}| - \sum_{i=1}^{k+2}d_i^2 = n^2 + (s-2) |\ul{\bf{d}}|^2 + \epsilon a (2|\ul{\bf{d}}|-a) + r(2|\ul{\bf{d}}|-1).$$
Since $| \ul{\bf{d}} |>a > 1$, if $s\geq 2$, then the right hand side is evidently at least $n^2$. On the other hand, if $s=1$, $r+ \epsilon a = n - | \ul{\bf{d}} | \geq | \ul{\bf{d}} |$ and $$-| \ul{\bf{d}} |^2 + (\epsilon a + r) | \ul{\bf{d}} |+ \epsilon a(| \ul{\bf{d}} |-a) + r(| \ul{\bf{d}} |-1) \geq 0.$$  We thus conclude that $\sum_{i=1}^{k+2} d_i(n - d_i) \geq n^2$ as desired.
\end{proof}


\section{Reduction techniques} \label{s:rt}

In this section, we prove a series of lemmas that reduce the density/sparsity problem for certain dimension vectors to the same problem for smaller ones.

\begin{lemma} \label{l:2}
The dimension vector $\ul{\bf{d}} = (d_1, \dots, d_k;n)$ is dense if $\sum_{i=1}^k d_i \leq n$.
\end{lemma}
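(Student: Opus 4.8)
The plan is to use Lemma~\ref{l:ml} and exhibit a single configuration $(U_1, \dots, U_k)$ whose stabilizer in $\pgl{n}$ has exactly the minimal possible dimension. Since $\sum_{i=1}^k d_i \leq n$, the intuition is that the subspaces $U_i$ can be chosen to be ``independent,'' i.e. to span a subspace of dimension $\sum d_i$ inside $V$ (a direct sum), so that the configuration is as generic as possible and the stabilizer is no larger than it must be. First I would pick a basis $e_1, \dots, e_n$ of $V$ and set $U_i = \langle e_{1+\sum_{j<i} d_j}, \dots, e_{\sum_{j \le i} d_j}\rangle$, so that the $U_i$ are spanned by disjoint blocks of basis vectors and $U_1 \oplus \cdots \oplus U_k$ is a direct sum.

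The key computation is then to determine $\mdim(\stab(U_1, \dots, U_k))$, where the stabilizer sits inside $\pgl{n}$ but is most easily computed first in $GL(n)$. An element $g \in GL(n)$ fixes each $U_i$ (as a subspace) exactly when its matrix, written in block form according to the decomposition $V = U_1 \oplus \cdots \oplus U_k \oplus W$ where $W$ is a complement of dimension $n - \sum d_i \ge 0$, preserves each block $U_i$. I would count the dimension of this stabilizer in $GL(n)$ directly: the block $U_i$ must map into itself, but the off-diagonal blocks from $U_i$ or $W$ into other subspaces are unconstrained in precisely the pattern that fixing each $U_i$ dictates. Concretely, $g$ preserves $U_i$ iff the columns indexed by $U_i$ have zero entries in the rows indexed by the other $U_j$; the entries in the $W$-rows and all of the $W$-columns are free. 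Carrying out this bookkeeping, the dimension of the $GL(n)$-stabilizer works out to $n^2 - \sum_{i=1}^k d_i(n - d_i)$, and passing to $\pgl{n}$ subtracts $1$ for the scalars (scalars lie in the stabilizer, so the projectivized stabilizer has dimension one less), giving exactly the value $(n^2 - 1) - \sum_{i=1}^k d_i(n-d_i)$ required by Lemma~\ref{l:ml}.

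I expect the main obstacle to be the honest dimension count of the off-diagonal blocks: one must verify that the only constraints imposed by fixing each $U_i$ are that, for each $i$, the linear map sends $U_i$ into $U_i$ (contributing freedom $d_i^2$ on the diagonal block) while all maps into or out of the complement $W$ remain free, and that these constraints are exactly complementary to the $\sum d_i(n-d_i)$ figure. The cleanest way to organize this is to compare the number of \emph{forbidden} matrix entries --- precisely those from a column indexed by some $U_i$ into a row indexed by a different $U_j$ --- against $\sum_{i=1}^k d_i(n-d_i)$ and confirm they do not agree in general, so I would instead argue via the orbit-stabilizer relation: the $GL(n)$-orbit of $(U_1,\dots,U_k)$ has dimension $\sum d_i(n-d_i)$ (it is the full product, or a dense subset of it, since the $U_i$ are independent and a suitable $g$ can move them to any generic configuration), whence the stabilizer has dimension $n^2 - \sum d_i(n-d_i)$ by the orbit--stabilizer theorem. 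This routes the argument back through Lemma~\ref{l:ml} and avoids the delicate entry-by-entry count; the substance reduces to showing the independent configuration lies in the dense orbit, which is where the hypothesis $\sum d_i \le n$ is essential.
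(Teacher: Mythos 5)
Your configuration is the same one the paper uses, but your description of its stabilizer is wrong, and that error is what derails the attempt. If $g$ preserves $U_i$, then $g(v)\in U_i$ for every $v\in U_i$, so the $U_i$-columns of $g$ must vanish in \emph{all} rows outside $U_i$ --- in the rows of the other $U_j$ \emph{and} in the $W$-rows. You instead declare the $W$-row entries of the $U_i$-columns to be free, and this is exactly why your forbidden-entry count comes out to $\sum_i d_i(m-d_i)$, where $m=\sum_j d_j$, rather than $\sum_i d_i(n-d_i)$; the mismatch you then observe (and which makes your second paragraph inconsistent with your third) is an artifact of this error, not a flaw in the approach. With the correct pattern --- $U_i$-columns supported in the $U_i$-rows, $W$-columns genuinely free --- the stabilizer in $GL(n)$ is the set of invertible matrices in a linear space of dimension $\sum_i d_i^2 + n(n-m) = n^2 - \sum_i d_i(n-d_i)$, so its image in $\pgl{n}$ has dimension exactly $(n^2-1)-\sum_i d_i(n-d_i)$, and Lemma \ref{l:ml} finishes the proof. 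In other words, the direct count you abandoned actually closes the argument, with no reduction needed.

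The fallback you substitute for it is not a proof as stated: asserting that the orbit ``is the full product, or a dense subset of it, since \dots a suitable $g$ can move them to any generic configuration'' is precisely the density statement to be proved, and you concede at the end that the substance reduces to this unproved claim. It can be made rigorous: for a generic tuple the $U_i'$ are independent (here is where $\sum_i d_i\le n$ enters), so choosing bases of the $U_i'$ and extending their union to a basis of $V$ yields $g\in GL(n)$ with $g(U_i)=U_i'$; since independence is a nonempty open condition on the product of Grassmannians, the orbit contains a dense open set. Note that once this is done you need neither the orbit--stabilizer relation nor Lemma \ref{l:ml}; density follows outright. For comparison, the paper's proof sidesteps the complement $W$ entirely: it first reduces to $\sum_i d_i=n$ by adjoining a subspace of dimension $n-\sum_i d_i$ and invoking Lemma \ref{2.7}, after which the stabilizer is visibly the block-diagonal group, of dimension $\sum_i d_i^2-1$.
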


\begin{proof}
By Lemma \ref{2.7} and adding a linear space of dimension $n - \sum_{i=1}^k d_i$, we may assume that $\sum_{i=1}^k d_i = n$.  Let $X$ be the ambient vector space with basis $\mc{B} = \{ e_1, \dots, e_n \}$. For any $1 \leq i \leq k$, let $\alpha_i := \sum_{j=1}^{i-1} d_i$ and consider the subspace $V_i = \langle e_{\alpha_i +1}, \dots, e_{\alpha_{i+1}} \rangle$ of $X$. Then, in the basis $\mc{B}$, the stabilizer of the configuration $(V_1, \dots, V_k; X)$  consists of block diagonal matrices with blocks of sizes $d_1, \dots, d_k$. Therefore, we have
$\mdim(\stab(V_1, \dots, V_k; X)) = \sum_{i=1}^k d_i^2 -1 = n^2 - 1 - \sum_{i=1}^k d_i(n-d_i).$ By Lemma \ref{l:ml}, we conclude that \ul{\bf{d}} is dense.
\end{proof}

\begin{lemma} \label{l:3}
Let $\ul{\bf{d}} = (a_1, \dots, a_r, b_1, \dots, b_s;n)$ be a dimension vector such that $\sum_{i=1}^r a_i = n-k < n$ and $\sum_{j=1}^s (n-b_j) \leq n-k$. Then \ul{\bf{d}} is dense if $\ul{\bf{d}}' = (a_1, \dots, a_r, b_1 - k, \dots, b_s - k; n - k)$ is dense.
\end{lemma}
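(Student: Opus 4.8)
The plan is to prove density by exhibiting an explicit configuration of type $\ul{\bf{d}}$ built from a dense one of type $\ul{\bf{d}}'$ and then applying the stabilizer criterion of Lemma \ref{l:ml}. The two hypotheses are precisely what is needed to produce a distinguished $k$-dimensional subspace inside a generic $\ul{\bf{d}}$-configuration: since $\sum_i a_i = n-k$ the spaces $A_i$ generically span an $(n-k)$-dimensional subspace $W$, while $\sum_j (n-b_j)\le n-k$ guarantees that $\bigcap_j B_j$ generically has dimension at least $k$. I would therefore fix once and for all a decomposition $V = W \oplus L$ with $\mdim W = n-k$ and $\mdim L = k$, identify $W \cong V/L$, and realize the reduced problem on $W$. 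Concretely, I start with a configuration $(\ol{A}_1,\dots,\ol{A}_r,\ol{B}_1,\dots,\ol{B}_s)$ in $W$ lying in the dense $\pgl{n-k}$-orbit for $\ul{\bf{d}}'$, and lift it to $V$ by setting $A_i = \ol{A}_i \subseteq W$ and $B_j = \ol{B}_j \oplus L$; then $\mdim A_i = a_i$ and $\mdim B_j = (b_j-k)+k = b_j$, so this is a configuration of type $\ul{\bf{d}}$. A preliminary observation I would record is that, because the quantities $\mdim\big(\sum_i \ol{A}_i\big)$ and $\mdim\big(\bigcap_j \ol{B}_j\big)$ are $\pgl{n-k}$-invariant and hence constant on orbits, the dense orbit attains their generic values; thus on this orbit $\sum_i \ol{A}_i = W$ and $C := \bigcap_j \ol{B}_j$ has the expected dimension $(n-k) - \sum_j(n-b_j)$.

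The next step is to compute the stabilizer of the lifted configuration in $GL(V)$ using the block form of $g$ relative to $V = W \oplus L$. Since the $A_i$ span $W$, the condition $g(A_i)=A_i$ forces $g(W)=W$, so the lower-left block vanishes and $g = \bigl(\begin{smallmatrix} P & Q \\ 0 & S \end{smallmatrix}\bigr)$. The remaining incidence conditions then decouple cleanly: preservation of the $A_i$ and the $B_j$ translates into $P \in \stab(\ol{A}_1,\dots,\ol{A}_r,\ol{B}_1,\dots,\ol{B}_s)$ inside $GL(W)$, the off-diagonal block $Q \in \mhom(L,W)$ constrained only by $Q(L) \subseteq C$, and $S \in GL(L)$ entirely free. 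As these three conditions are independent, the stabilizer is a product and
\[
\mdim\stab(A_1,\dots,A_r,B_1,\dots,B_s) = \mdim\stab(\ol{A}_1,\dots,\ol{B}_s) + k\cdot \mdim C + k^2,
\]
the two stabilizers being taken in $GL(V)$ and $GL(W)$ respectively.

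Finally I would feed in the density of $\ul{\bf{d}}'$. Writing $M = \sum_i d_i(n-d_i)$ for the dimension of the ambient product of Grassmannians for $\ul{\bf{d}}$ and $M'$ for the analogous quantity for $\ul{\bf{d}}'$, Lemma \ref{l:ml} (in its $GL$ form, differing from the $\pgl{}$ statement only by the central scalar) gives $\mdim\stab(\ol{A}_1,\dots,\ol{B}_s) = (n-k)^2 - M'$. Substituting this together with $\mdim C = (n-k)-\sum_j(n-b_j)$ into the displayed formula, a direct computation using $\sum_i a_i = n-k$ shows the total equals $n^2 - M$; the bookkeeping rests on the identity $M - M' = k(n-k) + k\sum_j(n-b_j)$ together with $n^2 - (n-k)^2 - k^2 = 2k(n-k)$. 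This is exactly the equality in Lemma \ref{l:ml} characterizing the dense orbit for $\ul{\bf{d}}$, so $\ul{\bf{d}}$ is dense. I expect the main obstacle to be the justification that the dense orbit for $\ul{\bf{d}}'$ simultaneously achieves the generic spanning of $W$ by the $\ol{A}_i$ and the generic dimension of $C = \bigcap_j \ol{B}_j$; both are handled by the invariance-on-orbits argument above, and getting $\mdim C$ correct is what makes the numerical identity close up.
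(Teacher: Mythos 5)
Your proof is correct, but it runs in the opposite direction from the paper's. The paper starts with a \emph{generic} configuration $(V_1,\dots,V_r,U_1,\dots,U_s;X)$ of type $\ul{\bf{d}}$, produces from it a configuration of type $\ul{\bf{d}}'$ on $W=V_1+\dots+V_r$ by setting $T_j=W\cap U_j$, and studies the restriction homomorphism $f:\stab(V_\bullet,U_\bullet;X)\to\stab(V_\bullet,T_\bullet;W)$; the kernel of $f$ is computed explicitly in a suitable basis to have dimension $k\cdot\mdim(Y)$ with $Y=\cap_j U_j$, and the resulting upper bound on $\mdim\stab$ matches the expected dimension, so Lemma \ref{l:ml} applies. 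You instead go from $\ul{\bf{d}}'$ up to $\ul{\bf{d}}$: you fix a splitting $V=W\oplus L$, take a dense-orbit configuration on $W$, and lift it by $A_i=\ol{A}_i$, $B_j=\ol{B}_j\oplus L$, then compute the stabilizer of this special (non-generic-looking) witness exactly as a product $\stab_{GL(W)}\times\mhom(L,C)\times GL(L)$; your block-matrix analysis, the identity $M-M'=k(n-k)+k\sum_j(n-b_j)$, and the $GL$-versus-$\pgl{}$ bookkeeping are all correct, and Lemma \ref{l:ml} legitimately applies to any single point, so density follows. Both arguments need the same genericity input (that on the dense orbit the $\ol{A}_i$ span and $\cap_j\ol{B}_j$ has expected dimension, which you justify by invariance of these dimensions on orbits plus openness of the dense orbit; the paper glosses the mirror-image claim with ``Observe that this construction yields a generic configuration''). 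What your route buys is an exact stabilizer computation rather than an inequality, and an explicit point of the dense orbit, which is of independent interest given the paper's applications. What the paper's route buys is reusability: the restriction-homomorphism template, combined with the kernel-triviality Lemma \ref{l:5}, is what powers the later reductions (Lemmas \ref{l:6}--\ref{l:9}), and it also makes the converse direction (the remark after Lemma \ref{l:10}) immediate, whereas your lifting construction would need separate adaptation for those.
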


\begin{proof}
Let $(V_1, \dots, V_r, U_1, \dots, U_s; X)$ be a generic configuration of vector spaces corresponding to the dimension vector \ul{\bf{d}}. Set $W := V_1 + \dots + V_r$ and $T_j := W \cap U_j$, $1 \leq j \leq s$. By our numerical assumptions, we have $\mdim(W) = n-k$ and $\mdim(T_j) = b_j - k$, for all $j$. Observe that this construction yields a generic configuration with dimension vector $\ul{\bf{d}}'$. Every element $M$ of $\pgl{n}$ that stabilizes the configuration $(V_1, \dots, V_r, U_1, \dots, U_s; X)$, preserves the subspace $W$ and the restriction of $M$ to $W$ stabilizes the configuration $(V_1, \dots, V_r, T_1, \dots, T_s; W)$. Hence, we get a group homomorphism
$$f : \stab(V_1, \dots, V_r, U_1, \dots, U_s; X) \ra \stab(V_1, \dots, V_r, T_1, \dots, T_s; W).$$
Set $Y = \cap_{j=1}^s U_j$ and note that $\mdim(Y) = n - \sum_{j=1}^s (n-b_j) \geq k$. We may choose a basis $\mc{B} = \{ e_1, \dots, e_n \}$ for $X$ such that $W = \langle e_1, \dots, e_{n-k} \rangle,~~ Y = \langle e_{n - \mdim(Y) + 1}, \dots, e_n \rangle.$
Then, with respect to the basis \mc{B}, the kernel of $f$ consists of matrices of the form
$$\left( \begin{array}{ccc}
I_{n - \mdim(Y)} & 0 & 0 \\
0 & I_{\mdim(Y) - k} & A \\
0 & 0 & B
\end{array} \right)$$
where $A$ is a $(\mdim(Y) - k) \times k$ matrix and $B$ is a $k \times k$ matrix. Hence, $\mdim(\mker(f)) = \mdim(Y) \times k$. The dimension vector $\ul{\bf{d}}'$ is dense by our assumption. Therefore,
$$\mdim(\stab(V_1, \dots, V_r, T_1, \dots, T_s; W)) = (n-k)^2 - 1 - \sum_{i=1}^r a_i(n-k-a_i) - \sum_{j=1}^s (b_j - k)(n-b_j).$$
The lemma  follows from Lemma \ref{l:ml} since
$$\mdim(\stab(U_1, \dots, U_r, V_1, \dots, V_s; X)) \leq \mdim(\stab(V_1, \dots, V_r, T_1, \dots, T_s; W)) + \mdim(\mker(f))$$
$$= n^2 - 1 - \sum_{i=1}^r a_i(n-a_i) - \sum_{j=1}^s b_j(n-b_j).$$
\end{proof}

\begin{lemma} \label{l:4}
Let $\ul{\bf{d}}=(a_{11}, \dots, a_{1s_1}, a_{21}, \dots, a_{2s_2}, \dots, a_{r1}, \dots, a_{rs_r}; n)$ be a dimension vector such that $\sum_{j=1}^{s_i} a_{ij} \leq n$ for all $1 \leq i \leq r$.   Then $\ul{\bf{d}}$ is sparse if the dimension vector  $(\sum_{j=1}^{s_1} a_{1j}, \dots, \sum_{j=1}^{s_r} a_{rj}; n)$ is sparse.
\end{lemma}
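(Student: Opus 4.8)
The plan is to prove the contrapositive: assuming $\ul{\bf{d}}$ is dense, I will show that the collapsed vector $\ul{\bf{d}}' = (\sum_{j} a_{1j}, \dots, \sum_j a_{rj}; n)$ is dense. The geometric bridge is the \emph{span map}. For a tuple of subspaces $(U_{ij})$ with $\mdim(U_{ij}) = a_{ij}$, the hypothesis $\sum_{j} a_{ij} \leq n$ guarantees that, generically, $W_i := \sum_{j=1}^{s_i} U_{ij}$ has dimension exactly $\sum_j a_{ij}$. Assembling the $W_i$ produces a rational map
$$\sigma : \prod_{i=1}^r \prod_{j=1}^{s_i} Gr(a_{ij};n) \dashrightarrow \prod_{i=1}^r Gr\Bigl(\textstyle\sum_{j=1}^{s_i} a_{ij};\, n\Bigr), \qquad (U_{ij}) \mapsto (W_i),$$
defined on the open locus $\Omega$ where every $W_i$ attains its maximal dimension. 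The strategy is to push the dense orbit forward along $\sigma$.

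First I would record the elementary properties of $\sigma$. It is $\pgl{n}$-equivariant, since applying $M \in \pgl{n}$ to each $U_{ij}$ sends $W_i$ to $M W_i$. The locus $\Omega$ is $\pgl{n}$-invariant, and it is nonempty (hence dense), because one can always place the $U_{ij}$ within a group in direct sum. Next I would verify that $\sigma$ is dominant; in fact it is surjective: given any target point $(W_1,\dots,W_r)$ with $\mdim(W_i) = \sum_j a_{ij}$, choosing a basis of each $W_i$ and partitioning it into blocks of sizes $a_{i1},\dots,a_{is_i}$ produces a preimage lying in $\Omega$.

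Finally I would push the dense orbit forward. Let $O = \pgl{n}\cdot x$ be the dense orbit in the source guaranteed by the density of $\ul{\bf{d}}$. Since $\Omega$ is open, dense, and $\pgl{n}$-invariant, the orbit $O$ meets $\Omega$ and is therefore contained in it, so $x \in \Omega$ and $O$ is dense in $\Omega$. By equivariance, $\sigma(O) = \pgl{n}\cdot\sigma(x)$ is a single $\pgl{n}$-orbit in the target. Continuity of $\sigma$ on $\Omega$ together with the density of $O$ in $\Omega$ gives $\sigma(\Omega) \subseteq \overline{\sigma(O)}$, and dominance of $\sigma$ then forces $\overline{\sigma(O)}$ to be the whole target. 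Thus $\sigma(O)$ is a dense orbit, so $\ul{\bf{d}}'$ is dense, which is exactly the contrapositive of the lemma.

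The main obstacle is the bookkeeping confirming that $\sigma$ is a genuine morphism on $\Omega$ with the claimed full image, and pinning down the elementary but essential topological facts that a $\pgl{n}$-invariant open dense set contains any dense orbit and that a dominant map carries dense sets to dense sets. One might hope to argue instead via Lemma \ref{l:ml} and the stabilizer inclusion $\stab((U_{ij})) \subseteq \stab((W_i))$ coming from the fact that any $M$ fixing all $U_{ij}$ fixes each $W_i$; however, this inclusion only bounds $\mdim(\stab((W_i)))$ from below, which is the wrong direction for deducing density of $\ul{\bf{d}}'$. The equivariant-map formulation avoids this difficulty and is the cleaner route.
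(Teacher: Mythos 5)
Your proof is correct, but it takes a genuinely different route from the paper. You prove the contrapositive by pushing the dense orbit forward along the $\pgl{n}$-equivariant span map $\sigma : (U_{ij}) \mapsto (W_i)$, using that $\sigma$ is a morphism on the invariant open locus $\Omega$ where each span has expected dimension, that $\sigma$ is surjective onto the target, and the topological fact that a continuous dominant map sends a dense subset of $\Omega$ to a dense subset of the target; all of these steps check out, and no induction is needed since you collapse all groups at once. The paper instead proceeds by induction on $r$ and $s_r$, reducing to merging two subspaces $U, W$ of dimensions $b, c$ into $U+W$, and then runs precisely the stabilizer argument you dismiss at the end --- but it repairs the ``wrong direction'' problem you identify: from the inclusion $\stab(V_1,\dots,V_t,U,W;X) \subseteq \stab(V_1,\dots,V_t,U+W;X)$ it constructs an injection
$$f : \stab(V_1, \dots, V_t, U+W; X) / \stab(V_1, \dots, V_t, U, W; X) \mono Gr(b,U+W) \times Gr(c,U+W),$$
$M \mapsto (M_{|U+W}U,\, M_{|U+W}W)$, which yields the upper bound $\mdim(\stab(\ul{\bf{d}}')) \leq \mdim(\stab(\ul{\bf{d}})) + 2bc$; since $2bc$ is exactly the difference between the dimensions of the two products of Grassmannians, Lemma \ref{l:ml} finishes the proof. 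The paper's approach buys a quantitative statement about stabilizer dimensions in the spirit of its main tool (Lemma \ref{l:ml}), while yours isolates a cleaner and more general principle --- a dominant $G$-equivariant map carries a dense orbit to a dense orbit --- that avoids dimension counting entirely.
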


\begin{proof}
By induction on $r$ and on $s_r$, it suffices to prove that the dimension vector $\ul{\bf{d}} = (a_1, \dots, a_t, b,c; n)$ is sparse if $\ul{\bf{d}}' = (a_1, \dots, a_t, b+c; n)$ is sparse. By Lemma \ref{l:ml}, this would follow from the inequality
\begin{equation} \label{e:ine}
\mdim(\stab(\ul{\bf{d}}')) \leq \mdim(\stab(\ul{\bf{d}})) + 2bc.
\end{equation}
Let $(V_1, \dots, V_t, U, W; X)$ be a generic configuration of vector spaces corresponding to $\ul{\bf{d}}$. Then, the configuration $(V_1, \dots, V_t, U + W; X)$ corresponds to $\ul{\bf{d}}'$. Every element $M$ of $\pgl{n}$ that stabilizes $(V_1, \dots, V_t, U, W; X)$, also stabilizes $(V_1, \dots, V_t, U+W; X)$. Therefore, $\stab(V_1, \dots, V_t, U, W; X)$ is a subgroup of $\stab(V_1, \dots, V_t, U+W; X)$. The map
$$f : \stab(V_1, \dots, V_t, U+W; X) / \stab(V_1, \dots, V_t, U, W; X) \mono Gr(b,U+W) \times Gr(c,U+W),$$
which sends an element $M$ to $(M_{|U+W}U, M_{|U+W}W)$ is injective. This proves the desired inequality \eqref{e:ine} since $\mdim(Gr(b,U+W)) = \mdim(Gr(c, U+W)) = bc$.
\end{proof}

This result allows us to improve slightly on Lemma \ref{l:2}:
\begin{lemma} \label{l:4.5}
The dimension vector $\ul{\bf{d}} = (d_1, \dots, d_k;n)$ is dense if $\sum_{i=1}^k d_i \leq n+1$.
\end{lemma}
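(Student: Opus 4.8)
The goal is to upgrade Lemma \ref{l:2}, which handles the case $\sum_{i=1}^k d_i \leq n$, to the slightly larger bound $\sum_{i=1}^k d_i \leq n+1$. The plan is to reduce the new case directly to Lemma \ref{l:2} using the sparsity-reduction result of Lemma \ref{l:4} together with the complement duality of Lemma \ref{l:c}. The only genuinely new situation is when $\sum_{i=1}^k d_i = n+1$, since the case $\sum_{i=1}^k d_i \leq n$ is already covered. So I would fix $\ul{\bf{d}} = (d_1, \dots, d_k; n)$ with total dimension exactly $n+1$ and aim to show density.

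First I would dispose of trivial sub-cases by domination (Lemma \ref{2.7}): if some $d_i = n$ or if $k = 1$ the statement is immediate, so I may assume all $d_i \leq n-1$ and $k \geq 2$. The key idea is to peel off the excess of $1$. Since $\sum d_i = n+1 > n$, I would like to combine two of the spaces or, dually, intersect them, so as to land in the regime $\sum d_i \leq n$ where Lemma \ref{l:2} applies. The cleanest route is to pass to the complement $\ul{\bf{d}}^c = (n-d_1, \dots, n-d_k; n)$, whose total dimension is $kn - (n+1) = (k-1)n - 1$. By Lemma \ref{l:c}, density of $\ul{\bf{d}}$ is equivalent to density of $\ul{\bf{d}}^c$. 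I expect the favorable case to be when two of the original dimensions are small (say two $1$'s among the $d_i$), because then one can merge them into a single space of dimension $2$ and reduce the length while lowering the total dimension to $n$; here Lemma \ref{l:4} (run backwards, i.e. using that density of the merged vector forces density of the split one is \emph{not} what Lemma \ref{l:4} gives, so one must instead argue that splitting preserves density, which is the direction I must be careful about).

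Because Lemma \ref{l:4} only propagates \emph{sparsity} upward from the merged vector to the split one, I cannot use it to prove density of $\ul{\bf{d}}$ from density of a merged vector. So the correct approach is a direct stabilizer computation mirroring the proof of Lemma \ref{l:2}. I would choose an explicit generic configuration realizing $\ul{\bf{d}}$: take a basis $e_1, \dots, e_n$ of the ambient space $X$ and assign to the spaces $U_1, \dots, U_k$ consecutive blocks of basis vectors that overlap in exactly a one-dimensional subspace, reflecting the single unit of excess. Concretely, since $\sum d_i = n+1$, any generic placement of the $U_i$ will be as transverse as possible: the spaces will be in general position subject only to $\dim(U_1 + \cdots + U_k) = n$, meaning there is a single linear relation among them. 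I would then compute $\mdim(\stab(U_1, \dots, U_k))$ for this configuration and check it equals $(n^2 - 1) - \sum_{i=1}^k d_i(n-d_i)$, which by Lemma \ref{l:ml} gives density.

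The hard part will be making the stabilizer computation clean in the excess-one case, since the block-diagonal picture of Lemma \ref{l:2} no longer applies verbatim: the overlapping one-dimensional relation forces the stabilizing matrices to respect the flag of the single common direction, and I must verify the off-diagonal freedom contributes exactly the right count. An alternative, and perhaps slicker, plan is to invoke Lemma \ref{l:3}: with $\sum d_i = n+1$, choosing $k$ appropriately one arranges a subset of the $a_i$ summing to $n-1$ and the remaining $b_j$ with $\sum (n - b_j) \leq n-1$, reducing to a vector of ambient dimension $n-1$ and total dimension at most $(n-1)$, which is dense by Lemma \ref{l:2}; the bookkeeping to verify the two numerical hypotheses of Lemma \ref{l:3} is exactly the step where the $+1$ gets absorbed. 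I would try the Lemma \ref{l:3} route first, as it offloads the delicate stabilizer dimension count onto the already-proved reduction lemma, and fall back on the explicit construction only if the numerical hypotheses cannot be uniformly arranged for all such $\ul{\bf{d}}$.
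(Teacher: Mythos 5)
There is a genuine gap, and it is precisely at the point where you reject Lemma \ref{l:4}. Your reading of its direction is correct (merged sparse $\Rightarrow$ split sparse, equivalently split dense $\Rightarrow$ merged dense), but you only considered casting $\ul{\bf{d}}$ in the role of the \emph{split} vector and merging its entries, and on that basis declared the lemma unusable. Cast the roles the other way: split each $d_i$ into $d_i$ copies of $1$, so that the split vector is $(1^r;n)$ with $r=\sum_{i=1}^k d_i\leq n+1$ and $\ul{\bf{d}}$ itself is the \emph{merged} vector; the grouping hypothesis of Lemma \ref{l:4} is just $d_i\leq n$, which holds trivially. Since $(1^r;n)$ is dense for $r\leq n+1$ --- the classical fact about points in general linear position in $\mb{P}^{n-1}$ recalled in \S\ref{sec-intro} --- the contrapositive of Lemma \ref{l:4} immediately gives density of $\ul{\bf{d}}$. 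This one-line argument is exactly the paper's proof; your proposal considered the right tool and discarded it.

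Neither of your fallback routes closes the gap. The direct stabilizer computation is only sketched: you acknowledge that the excess-one configuration breaks the block-diagonal picture but never carry out the count, so nothing is proved there. The Lemma \ref{l:3} route contains a concrete bookkeeping error: if $\sum_i d_i=n+1$ and you designate $b_1,\dots,b_s$ as the spaces to be cut down, the hypothesis $\sum_{j=1}^s(n-b_j)\leq\sum_i a_i$ forces $(s-1)n\leq 1$, hence $s=1$; and with a single $b_1\geq 2$ (so that the lemma's $k=b_1-1\geq 1$) the reduced vector is $(a_1,\dots,a_r,1;\,n-b_1+1)$, whose total dimension is again (new ambient dimension) $+\,1$, \emph{not} at most $n-1$ as you claim. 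So Lemma \ref{l:2} never becomes applicable. One can repair this by iterating the reduction, but the chain terminates when every entry equals $1$, i.e.\ at $(1^k;k-1)$, which still has excess one and whose density is precisely the classical general-position fact about $(1^{r};r-1)$ --- the very ingredient your plan never supplies. In short, any version of your argument still needs the density of $(1^{n+1};n)$ as input, and once that is granted, the paper's route via Lemma \ref{l:4} finishes in one step.
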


\begin{proof} Indeed, we know that the dimension vector $(1^r;n)$ is dense for $r\leq n+1$ (see \S \ref{sec-intro} and \cite[Section 1.6]{har92}), so the dimension vector $\ul{\bf{d}} = (d_1, \dots, d_k;n)$ is dense when $\sum_{i=1}^k d_i \leq n+1$ by Lemma \ref{l:4}.
\end{proof}

\begin{lemma} \label{l:5}
Suppose that an element $M$ of $\pgl{n}$ stabilizes a generic configuration $$(V_1, \dots, V_r; X)$$  corresponding to a dimension vector $\ul{\bf{d}} = (a_1, \dots, a_r; n)$ with $\sum_{i=1}^r a_i = n-k \leq n$. Suppose moreover that $M$ preserves a generic subspace $U$ of $X$ with $\mdim(U) \geq |\ul{\bf{d}}| + k$ and acts on it as the identity. Then $M$ is the identity element.
\end{lemma}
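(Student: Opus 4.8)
The plan is to reformulate the statement in terms of a linear representative of $M$ and then reduce everything to a single projection computation. Since $M \in \pgl{n}$ acts as the identity on $U$, I would choose a lift $\wt{M} \in GL(X)$ whose restriction to $U$ is literally the identity (triviality of the action on $U$ forces $\wt{M}|_U$ to be scalar, which I rescale away), so that proving $\wt{M} = \mi$ becomes equivalent to proving that $M$ is the identity of $\pgl{n}$. Because the configuration is generic and $\sum_{i=1}^r a_i = n-k \leq n$, the subspaces $V_1, \dots, V_r$ are in direct sum, so $W := V_1 + \cdots + V_r = V_1 \oplus \cdots \oplus V_r$ has dimension $n-k$. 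As $\wt{M}$ stabilizes each $V_i$, it preserves $W$, and its restriction $\wt{M}|_W$ is block diagonal with respect to this decomposition; write $\wt{M}|_W = D_1 \oplus \cdots \oplus D_r$ with $D_i \in GL(V_i)$.

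The heart of the argument is to show that each $D_i$ is the identity. First I would pass from $U$ to $U_0 := U \cap W$. Since $U$ is generic of dimension at least $|\ul{\bf{d}}| + k$ and $\mdim W = n-k$, the intersection has the expected dimension $\mdim U_0 = \mdim U - k \geq |\ul{\bf{d}}|$, it is a generic subspace of $W$ of that dimension, and $\wt{M}$ fixes it pointwise. Now for $w \in U_0$ write $w = w_1 + \cdots + w_r$ with $w_j \in V_j$. Then $w = \wt{M} w = D_1 w_1 + \cdots + D_r w_r$, and comparing components in the direct sum forces $D_j w_j = w_j$ for every $j$. Letting $p_i \colon W \to V_i$ denote the projection along $\bigoplus_{j \neq i} V_j$, this says precisely that $D_i$ fixes $p_i(U_0)$ pointwise. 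Since $U_0$ is a generic subspace of $W$ of dimension at least $|\ul{\bf{d}}| \geq a_i = \mdim V_i$, its projection to $V_i$ is surjective, i.e.\ $p_i(U_0) = V_i$; hence $D_i = \mi$ on $V_i$. Therefore $\wt{M}|_W = \mi$.

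Finally, a dimension count shows $U + W = X$: for generic $U$ one has $\mdim(U+W) = \mdim U + (n-k) - \mdim(U \cap W) = \mdim U + (n-k) - (\mdim U - k) = n$. Since $\wt{M}$ is the identity on both $U$ and $W$, it is the identity on $U + W = X$, so $\wt{M} = \mi$ and $M$ is the identity of $\pgl{n}$, as desired. The only delicate points are the genericity claims—that $U \cap W$ has the expected dimension and is itself generic in $W$, so that each coordinate projection $p_i|_{U_0}$ is onto—but these are standard facts about generic subspaces of the relevant dimensions, and the inequality $\mdim U \geq |\ul{\bf{d}}| + k$ is exactly what guarantees both $\mdim U_0 \geq |\ul{\bf{d}}|$ and $U + W = X$. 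I expect this verification of genericity to be the main (though routine) obstacle, while the algebraic core is the one-line projection computation above.
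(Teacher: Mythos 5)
Your proof is correct and takes essentially the same route as the paper's: the paper's adapted basis, in which each matrix $A_i$ is normalized to $(I_{a_i}\ *)$, encodes exactly your claim that $U_0 = U\cap W$ projects onto each $V_i$, its deduction that $M(e_i)=e_i$ for $i\leq n-k$ is your $D_i=\mathrm{Id}$, and its inclusion of $e_{n-k+1},\dots,e_n$ in $U$ is your dimension count $U+W=X$. The only differences are presentational: you work coordinate-free with projections and an explicit scalar-normalized lift of $M$ to $GL(X)$, whereas the paper runs the identical argument as a matrix computation in an adapted basis.
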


\begin{proof}
For every $1 \leq i \leq r$, set $\alpha_i := \sum_{j=1}^i a_i$, and choose a basis $\mc{B} = \{ e_1, \dots, e_n \}$ for $X$ such that
$$ V_i = \langle e_{\alpha_{i-1}+1}, \dots, e_{\alpha_i} \rangle, \ \ \forall 1 \leq i \leq r,$$
and that $U = \langle f_1, \dots, f_{\mdim(U)-k}, e_{n-k+1}, \dots, e_n \rangle$, with $f_j \in \langle e_1, \dots, e_{n-k} \rangle$. We can express the vectors $f_j$, $1 \leq j \leq \mdim(U) - k$, in terms of the vectors $e_i$, $1 \leq i \leq n-k$, in the  form
$$(f_1, \dots, f_{\mdim(U)-k}) = (E_1, \dots, E_r) \left( \begin{array}{ccc}
A_1 \\
\vdots \\
A_r
\end{array} \right),$$
where $E_i = (e_{\alpha_{i-1}+1}, \dots, e_{\alpha_i})$ and $A_i$ is an $a_i \times (\mdim(U) - k)$ matrix. Since $U$ is assumed to be a generic subspace, each $A_i$ has an invertible full minor. Thus, after suitable change of basis for each  $V_i$, we may assume that for all $1 \leq i \leq r$, the matrix $A_i$ has the form $A_i = (I_{a_i} ~ *)$. Then, for an element $M \in \stab(V_1, \dots, V_r; X)$, $M(f_j) = f_j$ for all $1 \leq j \leq \mdim(U) - k$, implies that $M(e_i) = e_i$ for all $1 \leq i \leq n-k$. Therefore, if $M$ stabilizes the configuration $(V_1, \dots, V_r; X)$ and acts as identity on $U$, $M$ has to be the identity element.
\end{proof}

\begin{lemma} \label{l:6}
Consider the dimension vector $\ul{\bf{d}} = (a_1, \dots, a_r, b, b; n)$ with $b + \sum_{i=1}^r a_i = n$ and $b \leq \frac{n}{2}$. Then \ul{\bf{d}} is dense if the dimension vector $\ul{\bf{d}}' = (a_1, \dots, a_r, b; n-b)$ is dense.
\end{lemma}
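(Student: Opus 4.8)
The plan is to imitate the reduction of Lemma \ref{l:3}: I will realize $\ul{\bf{d}}'$ inside a configuration for $\ul{\bf{d}}$ by restricting to the span of the $V_i$, and then control the two stabilizers through the restriction homomorphism and Lemma \ref{l:ml}. I would start with a generic configuration $(V_1, \dots, V_r, U_1, U_2; X)$ realizing $\ul{\bf{d}}$, with $\mdim V_i = a_i$ and $\mdim U_1 = \mdim U_2 = b$. Since $\sum_{i=1}^r a_i = n - b$, the span $W := V_1 + \cdots + V_r$ is generically $(n-b)$-dimensional; and since $\mdim(U_1 + U_2) = 2b$ (using $b \leq \frac{n}{2}$), a dimension count gives $\mdim((U_1 + U_2) \cap W) = 2b + (n-b) - n = b$. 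Setting $T := (U_1 + U_2) \cap W$, the tuple $(V_1, \dots, V_r, T; W)$ is then a configuration with dimension vector $\ul{\bf{d}}' = (a_1, \dots, a_r, b; n - b)$.

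Every $M$ in $\stab(V_1, \dots, V_r, U_1, U_2; X)$ preserves $W$ and preserves $U_1 + U_2$, hence preserves $T$, so restriction to $W$ yields a homomorphism
$$f : \stab(V_1, \dots, V_r, U_1, U_2; X) \ra \stab(V_1, \dots, V_r, T; W).$$
The crux is that $f$ is injective. An element $M$ of $\mker(f)$ acts as the identity on $W$ while stabilizing $U_1$ and $U_2$, and I would conclude $M = \mi$ by invoking Lemma \ref{l:5} for the sub-configuration $(U_1, U_2; X)$: here the relevant $k$ equals $n - 2b \geq 0$, the size is $b$, and $W$ is a generic subspace with $\mdim W = n - b = b + k$, so the hypothesis of Lemma \ref{l:5} (that $W$ have dimension at least the size plus $k$) holds with equality. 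Equivalently, writing $X = W \oplus Z$ and each $U_i$ (which meets $W$ trivially) as the graph of a map $\phi_i : Z \ra W$, stabilization of $U_1$ and $U_2$ forces $(\phi_1 - \phi_2)(C - \mi) = 0$ for the $Z$-block $C$ of $M$; since $T$ is the image of $\phi_1 - \phi_2$ and has dimension $b = \mdim Z$, the map $\phi_1 - \phi_2$ is injective, whence $C = \mi$ and then $M = \mi$.

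Injectivity of $f$ gives $\mdim(\stab(V_1, \dots, V_r, U_1, U_2; X)) \leq \mdim(\stab(V_1, \dots, V_r, T; W))$. Since $\ul{\bf{d}}'$ is dense, Lemma \ref{l:ml} evaluates the right-hand side as $((n-b)^2 - 1) - \sum_{i=1}^r a_i(n - b - a_i) - b(n - 2b)$, and a direct computation shows this equals $(n^2 - 1) - \sum_{i=1}^r a_i(n - a_i) - 2b(n - b)$, the change in ambient dimension cancelling the loss of one $b$-subspace exactly. Together with the reverse inequality noted after Lemma \ref{l:ml}, this forces equality in Lemma \ref{l:ml} for $\ul{\bf{d}}$, so $\ul{\bf{d}}$ is dense.

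The step I expect to require the most care is the genericity bookkeeping: I must ensure that the configuration $(V_1, \dots, V_r, T; W)$ produced from a generic $\ul{\bf{d}}$-configuration actually lies in the dense orbit of $\ul{\bf{d}}'$ (so that Lemma \ref{l:ml} applies to it), and that $W$ is suitably generic relative to $(U_1, U_2)$ as Lemma \ref{l:5} demands. If proving dominance of the construction directly is awkward, I would instead run the argument backwards on a single well-chosen point: fix a representative of the dense orbit of $\ul{\bf{d}}'$, embed its ambient space as $W \subseteq X$, and choose $U_1, U_2$ as generic $b$-planes spanning a $2b$-plane that meets $W$ exactly in $T$. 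This suffices, since by the remark after Lemma \ref{l:ml} density only requires one configuration attaining the minimal stabilizer dimension.
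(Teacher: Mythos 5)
Your proposal is correct and follows essentially the same route as the paper: the same construction $W = V_1 + \cdots + V_r$, $T = (U_1+U_2)\cap W$, the same restriction homomorphism $f$, injectivity via Lemma \ref{l:5} applied to the sub-configuration $(U_1, U_2; X)$ with $k = n-2b$, and the same dimension count via Lemma \ref{l:ml}. Your extra block-matrix verification of injectivity and your attention to the genericity of the induced $\ul{\bf{d}}'$-configuration (which the paper simply asserts) are sound but do not change the argument.
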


\begin{proof}
Let $(V_1, \dots, V_r, U_1, U_2; X)$ be a generic configuration of vector spaces corresponding to $\ul{\bf{d}}$. Set $W := V_1 + \dots + V_r$ and $U' := W \cap (U_1 + U_2)$. Then $\mdim(W) = n-b$ and $\mdim(U') = b$. Observe that this construction yields a generic configuration with dimension vector $\ul{\bf{d}}'$. Consider the map induced by restriction
$$f : \stab(V_1, \dots, V_r, U_1, U_2; X) \ra \stab(V_1, \dots, V_r, U'; W).$$
If the dimension vector $\ul{\bf{d}}'$ is dense, then by Lemma \ref{l:ml},
$$\mdim(\stab(V_1, \dots, V_r, U'; W)) = (n-b)^2 - 1 - \sum_{i=1}^r a_i(n-b-a_i) - b(n-2b)$$
$$= n^2 - 1 - \sum_{i=1}^r a_i(n-a_i) - 2b(n-b).$$
By another application of Lemma \ref{l:ml}, it suffices to prove that the map $f$ is injective. An element $M \in \mker(f)$ preserves $U_1$ and $U_2$ and acts as the identity on $W$, and thus has to be the identity element by Lemma \ref{l:5}.
\end{proof}

\begin{lemma} \label{l:7}
Let $\ul{\bf{d}} = (a_1, \dots, a_r, b; n)$ be a dimension vector with $\sum_{i=1}^r a_i = n$, $|\ul{\bf{d}}| = b$, and $a_i + b \leq n$ for all $1 \leq i \leq r$. Then $\ul{\bf{d}}$ is dense if the dimension vector $\ul{\bf{d}}' = (a_1, \dots, a_r; b)$ is dense.
\end{lemma}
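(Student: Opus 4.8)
The plan is to mimic the reduction lemmas above: pass to the smaller ambient space $U$ of dimension $b$, construct there an auxiliary configuration, build a homomorphism of stabilizers whose injectivity follows from Lemma~\ref{l:5}, and close with a dimension count via Lemma~\ref{l:ml}. Since $\sum_{i=1}^r a_i = n$, a generic configuration $(V_1, \dots, V_r;X)$ corresponding to $(a_1,\dots,a_r;n)$ is a direct sum decomposition $X = V_1 \oplus \cdots \oplus V_r$. Set $W_i := \sum_{j\neq i} V_j$, so that $\mdim(W_i) = n - a_i$. For a generic $b$-dimensional subspace $U$, I would put $T_i := U \cap W_i$; because $b = |\ul{\bf{d}}| \geq a_i$, generic transversality gives $\mdim(T_i) = b + (n-a_i) - n = b - a_i$. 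Thus $(T_1, \dots, T_r; U)$ has dimension vector $(b-a_1, \dots, b-a_r; b)$, which is exactly the complement $(\ul{\bf{d}}')^c$ of $\ul{\bf{d}}' = (a_1,\dots,a_r;b)$.

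Next I would construct the comparison map. Any $M \in \stab(V_1, \dots, V_r, U; X)$ fixes each $V_i$, hence each $W_i$, and therefore each $T_i = U \cap W_i$, as well as $U$ itself. Restricting such an $M$ to $U$ yields a group homomorphism
$$f : \stab(V_1, \dots, V_r, U; X) \ra \stab(T_1, \dots, T_r; U).$$
An element $M \in \mker(f)$ stabilizes the configuration $(V_1, \dots, V_r; X)$ with $\sum_{i=1}^r a_i = n$ (that is, $k=0$ in the notation of Lemma~\ref{l:5}) and acts as the identity on the generic subspace $U$, whose dimension $b$ satisfies $b \geq \max_i a_i$, i.e. $\mdim(U) \geq |(a_1,\dots,a_r;n)| + k$. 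Lemma~\ref{l:5} then forces $M$ to be the identity, so $f$ is injective.

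The dimension count follows. If $\ul{\bf{d}}'$ is dense, then by Lemma~\ref{l:c} so is its complement $(\ul{\bf{d}}')^c$, and hence for the generic configuration $(T_1, \dots, T_r; U)$ Lemma~\ref{l:ml} gives $\mdim(\stab(T_1, \dots, T_r; U)) = (b^2 - 1) - \sum_{i=1}^r (b-a_i)a_i$. Using $\sum_{i=1}^r a_i = n$, a short computation rewrites this as $(n^2 - 1) - \sum_{i=1}^r a_i(n-a_i) - b(n-b)$. Injectivity of $f$ bounds $\mdim(\stab(V_1, \dots, V_r, U; X))$ above by this same quantity, while the Remark following Lemma~\ref{l:ml} supplies the reverse inequality; equality together with Lemma~\ref{l:ml} yields the density of $\ul{\bf{d}}$.

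The step I expect to be the main obstacle is verifying that the induced tuple $(T_1, \dots, T_r; U)$ is genuinely \emph{generic} in $\prod_i Gr(b-a_i; b)$, since only then does Lemma~\ref{l:ml} compute the stabilizer dimension of the complement on the nose. This is where I would use the hypothesis $a_i + b \leq n$, i.e. $b \leq n - a_i = \mdim(W_i)$: it guarantees that each $U \cap W_i$ is a transverse intersection of the expected dimension and, as $U$ varies over generic $b$-planes with the $V_i$ fixed, that the resulting subspaces $T_i$ sweep out a dense subset of the relevant product of Grassmannians. Making this genericity assertion precise (rather than merely asserting it, as in the analogous Lemmas~\ref{l:3} and~\ref{l:6}) is the one place requiring care.
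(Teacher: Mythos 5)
Your proposal is correct and follows essentially the same route as the paper's proof: the same subspaces $W_i = \sum_{j\neq i} V_j$ and $U_i = U \cap W_i$ (your $T_i$), the same restriction homomorphism into $\stab(U_1,\dots,U_r;U)$, injectivity via Lemma \ref{l:5}, and the identical dimension count using Lemma \ref{l:c} and Lemma \ref{l:ml}. The genericity of the induced configuration $(T_1,\dots,T_r;U)$, which you rightly flag as the point needing care, is exactly the step the paper passes over with an implicit "this construction yields a generic configuration."
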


\begin{proof}
Let $(V_1, \dots, V_r, U; X)$ be a generic configuration of vector spaces corresponding to $\ul{\bf{d}}$. For any $1 \leq i \leq r$, set $W_i := V_1 + \dots + \wh{V}_i + \dots + V_r$ and $U_i := W_i \cap U$. Then $\mdim(W_i) = n-a_i$, $\mdim(U_i) = b-a_i$, and we have a homomorphism induced by restriction
$$f : \stab(V_1, \dots, V_r, U; X) \ra \stab(U_1, \dots, U_r; U).$$
By Lemma \ref{l:5}, $f$ has trivial kernel. On the other hand, if $\ul{\bf{d}}'$ is dense, so is its complement $\ul{\bf{d}}'^c = (b-a_1, \dots, b-a_r; b)$, and thus we have:
$$\mdim(\stab(U_1, \dots, U_r; U)) = b^2 -1 - \sum_{i=1}^r a_i(b-a_i)$$
$$= n^2 - 1 - \sum_{i=1}^r a_i(n-a_i) - b(n-b).$$
Therefore, by Lemma \ref{l:ml}, $\ul{\bf{d}}$ is dense as well.
\end{proof}

\begin{lemma} \label{l:8}
Let $\ul{\bf{d}} = (a_1, \dots, a_r, b_1, b_2; n)$ be a dimension vector with $\sum_{i=1}^r a_i = n-k < n$, $k \leq b_1, b_2$, and $b_1 + b_2 = n$. Then $\ul{\bf{d}}$ is dense if the dimension vector $\ul{\bf{d}}' = (a_1, \dots, a_r, b_1 - k, b_2 - k; n-k)$ is dense.
\end{lemma}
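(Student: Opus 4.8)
The plan is to follow the strategy of Lemma \ref{l:6}, but keeping \emph{both} $U_1$ and $U_2$ in the reduced configuration. First I would take a generic configuration $(V_1, \dots, V_r, U_1, U_2; X)$ with dimension vector $\ul{\bf{d}}$ and set $W := V_1 + \dots + V_r$ and $T_j := W \cap U_j$ for $j = 1, 2$. By the numerical hypotheses $\sum_{i=1}^r a_i = n - k$ and $b_j \geq k$, a generic configuration satisfies $\mdim(W) = n-k$ and $\mdim(T_j) = (n-k) + b_j - n = b_j - k \geq 0$, so that $(V_1, \dots, V_r, T_1, T_2; W)$ is a generic configuration with dimension vector $\ul{\bf{d}}'$. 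Restriction to $W$ then defines a homomorphism
$$f : \stab(V_1, \dots, V_r, U_1, U_2; X) \ra \stab(V_1, \dots, V_r, T_1, T_2; W),$$
since any $M$ stabilizing the configuration on $X$ preserves $W$, each $V_i$, and each $T_j = W \cap U_j$.

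Next I would carry out the dimension bookkeeping. Assuming $\ul{\bf{d}}'$ is dense, Lemma \ref{l:ml} applied to the generic $\ul{\bf{d}}'$-configuration gives an explicit value for $\mdim(\stab(V_1, \dots, V_r, T_1, T_2; W))$; using $(n-k)-(b_1-k) = b_2$ and $(n-k)-(b_2-k) = b_1$ and expanding, this value simplifies to exactly
$$(n^2 - 1) - \sum_{i=1}^r a_i(n-a_i) - b_1(n-b_1) - b_2(n-b_2),$$
which is precisely the target dimension for $\stab(V_1, \dots, V_r, U_1, U_2; X)$ in Lemma \ref{l:ml}. Since the reverse inequality $\mdim(\stab) \geq (n^2-1) - \sum_i d_i(n-d_i)$ always holds, it therefore suffices to prove that $f$ is injective, i.e.\ that $\mker(f)$ is trivial.

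The injectivity of $f$ is the crux of the argument. After rescaling a representative, an element $M \in \mker(f)$ fixes $W$ pointwise and preserves both $U_1$ and $U_2$. Because $b_1 + b_2 = n$, a generic choice gives $U_1 \cap U_2 = 0$ and hence $X = U_1 \oplus U_2$, with $M$ block diagonal for this decomposition. I would write $U_j = T_j \oplus C_j$ with $\mdim(C_j) = k$, so that $X = T_1 \oplus C_1 \oplus T_2 \oplus C_2$; since $M$ fixes $T_1$ and $T_2$ pointwise, it remains only to control $C_1$ and $C_2$. The key point is that for a generic configuration the image $W_0$ of $W$ in $X/(T_1 \oplus T_2) \cong C_1 \oplus C_2$ is a $k$-dimensional subspace meeting each of $C_1$ and $C_2$ trivially, hence the graph of an isomorphism $C_1 \to C_2$. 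Choosing bases $c_1, \dots, c_k$ of $C_1$ and $c_1', \dots, c_k'$ of $C_2$ adapted to this graph, $W$ contains vectors $c_\ell + c_\ell' + \tau_\ell$ with $\tau_\ell \in T_1 \oplus T_2$. Applying $M$ and using $M\tau_\ell = \tau_\ell$ gives $Mc_\ell + Mc_\ell' = c_\ell + c_\ell'$; since $Mc_\ell, c_\ell \in U_1$ and $Mc_\ell', c_\ell' \in U_2$, the direct sum decomposition forces $Mc_\ell = c_\ell$ and $Mc_\ell' = c_\ell'$. Thus $M$ fixes a full basis of $X$ and is the identity. (This is the analogue, in the present situation, of Lemma \ref{l:5}.) Combining injectivity of $f$ with the dimension computation and Lemma \ref{l:ml} then shows that $\ul{\bf{d}}$ is dense.

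I expect the main obstacle to be exactly this injectivity step, and more precisely the verification that a generic configuration places $W$ in general position relative to the splitting $X = U_1 \oplus U_2$ (namely that $W_0$ is a graph). All of the genericity conditions used are Zariski-open and nonempty, so they hold for a generic configuration; the hypotheses $b_j \geq k$ (ensuring $T_j$ has the expected nonnegative dimension) and $b_1 + b_2 = n$ (ensuring $X = U_1 \oplus U_2$) are precisely what makes the argument go through. Note also that the inequality $\sum_j (n-b_j) \leq n-k$ required by Lemma \ref{l:3} fails here, since it reads $n \leq n-k$, which is why this separate reduction is needed.
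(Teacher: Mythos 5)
Your proof is correct and follows the paper's argument almost step for step: the same subspaces $W = V_1 + \cdots + V_r$ and $T_j = W \cap U_j$, the same restriction homomorphism $f$, and the same dimension count showing that density of $\ul{\bf{d}}'$ together with injectivity of $f$ forces $\stab(V_1, \dots, V_r, U_1, U_2; X)$ to have the expected dimension, whence $\ul{\bf{d}}$ is dense by Lemma \ref{l:ml}. The only divergence is at the injectivity step, which you correctly identify as the crux: the paper disposes of it in one line by invoking Lemma \ref{l:5}, applied to the generic configuration $(U_1, U_2; X)$ (whose dimensions sum to $n$, so the ``$k$'' of that lemma is $0$) and the generic subspace $W$ of dimension $n-k \geq \max(b_1, b_2)$ on which any $M \in \mker(f)$ acts as the identity; you instead re-prove triviality of the kernel by hand, using $X = T_1 \oplus C_1 \oplus T_2 \oplus C_2$ and the fact that for a generic configuration $W$ maps onto the graph of an isomorphism $C_1 \to C_2$ in $X/(T_1 \oplus T_2)$. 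Your direct argument is sound and is in effect a self-contained special case of the proof of Lemma \ref{l:5}, so nothing is lost beyond brevity; conversely, recognizing that Lemma \ref{l:5} applies verbatim here would have saved you the genericity verification you flag as the main obstacle.
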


\begin{proof}
Let $(V_1, \dots, V_r, U_1, U_2; X)$ be a generic configuration of vector spaces corresponding to \ul{\bf{d}} and set $W := V_1 + \dots + V_r$ and $U_i' := W \cap U_i$ for $i = 1,2$. Then $\mdim(W) = n-k$, $\mdim(U_i') = b_i - k$ for $i = 1,2$. Observe that this construction yields a generic configuration with dimension vector $\ul{\bf{d}}'$. We have the homomorphism induced by restriction
$$f : \stab(V_1, \dots, V_r, U_1, U_2; X) \ra \stab(V_1, \dots, V_r, U_1', U_2'; W).$$
Since any element $M \in \mker(f)$ preserves $U_1$ and $U_2$ and acts as identity on $W$, by  Lemma \ref{l:5}, $f$ is injective. Therefore, if $\ul{\bf{d}}'$ is dense, we have
$$\mdim(\stab(V_1, \dots, V_r, U_1, U_2; X)) \leq \mdim(\stab(V_1, \dots, V_r, U_1', U_2'; W))$$
$$= (n-k)^2 - 1 - \sum_{i=1}^r a_i(n-k-a_i) - \sum_{j=1}^2 (b_j-k)(n-b_j)$$
$$= n^2 - 1 - \sum_{i=1}^r a_i(n-a_i) - \sum_{j=1}^2 b_j(n-b_j).$$
This, together with Lemma \ref{l:ml}, implies that the dimension vector $\ul{\bf{d}}$ is dense.
\end{proof}

\begin{lemma} \label{l:9}
Let $\ul{\bf{d}} = (a_1, \dots, a_r, b_1, b_2; n)$ be a dimension vector with $\sum_{i=1}^r a_i = n-k < n$, $k \leq b_1, b_2$, and $b_1 + b_2 < n$. Set $m = b_1 + b_2 - k$. Then  $\ul{\bf{d}}$ is dense if the dimension vector $\ul{\bf{d}}' = (a_1, \dots, a_r, b_1, b_2; m)$ is dense.
\end{lemma}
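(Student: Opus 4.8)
The plan is to follow the template of Lemmas \ref{l:3}--\ref{l:8}: choose a generic configuration $(V_1,\dots,V_r,U_1,U_2;X)$ with dimension vector $\ul{\bf{d}}$, produce from it a configuration realizing $\ul{\bf{d}}'$ in an $m$-dimensional space, and compare stabilizers through Lemma \ref{l:ml}. Set $W:=V_1+\dots+V_r$ and $Y:=U_1+U_2$. By genericity $\mdim(W)=n-k$, and since $b_1+b_2<n$ we have $U_1\cap U_2=0$, so $\mdim(Y)=b_1+b_2=m+k$; moreover $W+Y=X$ because $(n-k)+(m+k)\geq n$, whence $Z:=W\cap Y$ has $\mdim(Z)=(n-k)+(m+k)-n=m$. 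Thus $Z$ is the natural $m$-dimensional ambient space for $\ul{\bf{d}}'$. Note that, for $\ul{\bf{d}}'$ to be a legitimate dimension vector, we must have $a_i\leq m$ and $b_j\leq m$; the latter is automatic from $k\leq b_1,b_2$, since $m=b_{\max}+(b_{\min}-k)\geq b_{\max}$.

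First I would show that the restriction map $\rho:\stab(V_1,\dots,V_r,U_1,U_2;X)\to \pgl{Z}$, $M\mapsto M|_Z$, is well defined and injective. It is well defined because any $M$ in the stabilizer preserves $W=\sum V_i$ and $Y=U_1+U_2$, hence $Z=W\cap Y$. For injectivity, suppose $M|_Z=\mi$. Since $\sum a_i=n-k=\mdim(W)$, genericity gives $W=\bigoplus_i V_i$, so $M|_W$ stabilizes the configuration $(V_1,\dots,V_r;W)$ and fixes the generic subspace $Z\subseteq W$ of dimension $m\geq a_{\max}$ pointwise; by Lemma \ref{l:5} (applied inside $W$, where the excess parameter is $0$) we get $M|_W=\mi$. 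Likewise $M|_Y$ stabilizes $(U_1,U_2;Y)$ and fixes the generic subspace $Z\subseteq Y$ of dimension $m\geq b_{\max}$ pointwise, so Lemma \ref{l:5} gives $M|_Y=\mi$. As $X=W+Y$, we conclude $M=\mi$. In particular $\mdim(\stab(\ul{\bf{d}}))=\mdim(\operatorname{Im}\rho)$.

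Next I would cut out a configuration of type $(\ul{\bf{d}}')^c$ inside $Z$ and show that $\operatorname{Im}\rho$ preserves it. Because $M$ preserves each $U_j$ and $Y=U_1\oplus U_2$, the restriction $M|_Y$ is block diagonal, so the projection $\pi_j:Z\hookrightarrow Y\twoheadrightarrow U_j$ (along $U_{j'}$) is $\rho$-equivariant; its kernel $Z\cap U_{j'}=W\cap U_{j'}$ is $\rho$-invariant of dimension $m-b_j$. Similarly, using $W=\bigoplus_i V_i$, the projection $\sigma_i:Z\hookrightarrow W\twoheadrightarrow V_i$ is $\rho$-equivariant with $\rho$-invariant kernel $Z\cap(\sum_{i'\neq i}V_{i'})$ of dimension $m-a_i$. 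The subspaces $\{\ker\sigma_i\}_i\cup\{\ker\pi_j\}_j$ therefore form a configuration in $Z$ with dimension vector $(m-a_1,\dots,m-a_r,m-b_1,m-b_2;m)=(\ul{\bf{d}}')^c$ that is stabilized by $\operatorname{Im}\rho$. Hence $\mdim(\stab(\ul{\bf{d}}))\leq \mdim$ of the stabilizer of this $(\ul{\bf{d}}')^c$-configuration. Since $\ul{\bf{d}}'$ is dense, so is $(\ul{\bf{d}}')^c$ by Lemma \ref{l:c}; if the induced configuration is itself generic, its stabilizer attains the minimal dimension $m^2-1-\sum_i a_i(m-a_i)-\sum_j b_j(m-b_j)$. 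The numerical identity $\sum_i a_i+b_1+b_2=(n-k)+(m+k)=n+m$ shows this minimal value equals $n^2-1-\sum_i a_i(n-a_i)-\sum_j b_j(n-b_j)$. Combined with the inequality in the Remark after Lemma \ref{l:ml}, this forces equality, so by Lemma \ref{l:ml} the vector $\ul{\bf{d}}$ is dense.

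The main obstacle is this last step: one must verify that, for a generic configuration of type $\ul{\bf{d}}$, the induced configuration of kernels $\{\ker\sigma_i,\ker\pi_j\}$ inside $Z$ is in general position, so that its stabilizer really does have the minimal dimension predicted by the density of $(\ul{\bf{d}}')^c$. A non-generic induced configuration would only yield a weaker upper bound on $\mdim(\stab(\ul{\bf{d}}))$. Establishing this genericity --- equivalently, that the operations of taking spans, intersecting with $Z$, and projecting carry a generic $\ul{\bf{d}}$-configuration to a generic $(\ul{\bf{d}}')^c$-configuration --- together with the routine genericity of $Z$ inside $W$ and $Y$ needed to invoke Lemma \ref{l:5}, is where the real work lies.
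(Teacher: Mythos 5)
Your proof is correct and is essentially the paper's own argument, telescoped into one step: the paper restricts the stabilizer first to $W=V_1+\cdots+V_r$ and then to $T=W\cap(U_1+U_2)$ (your $Z$), and the induced subspaces $V_i'=T\cap\bigl(\sum_{i'\neq i}V_{i'}\bigr)$ and $U_j'=W\cap U_j$ are exactly your $\ker\sigma_i$ and $\ker\pi_j$, recognized as a configuration of type $(\ul{\bf{d}}')^c$ whose stabilizer bounds $\stab(\ul{\bf{d}})$ via Lemma \ref{l:5}-injectivity and the same numerical identity $\sum_i a_i + b_1+b_2 = n+m$. The genericity you single out as ``the real work'' is precisely the step the paper disposes of by assertion (``Then $(V_1',\dots,V_r',U_1',U_2';T)$ is a generic configuration of vector spaces corresponding to the complement of the dimension vector $\ul{\bf{d}}'$''), so your writeup matches the paper's proof in both structure and level of rigor.
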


\begin{proof}
We prove the assertion in two steps. Let $(V_1, \dots, V_r, U_1, U_2; X)$ be a generic configuration of vector spaces corresponding to \ul{\bf{d}} and set $W := V_1 + \dots + V_r$ and $U := U_1 + U_2$. Also, let $T = U \cap W$ and $U_i' = U_i \cap W$ for $i = 1,2$. Note that $\mdim(W) = n-k$ and $\mdim(T) = b_1 + b_2 - k = m$. Any element in the stabilizer of $(V_1, \dots, V_r, U_1, U_2; X)$ stabilizes $W$, $T$, $U_1'$, and $U_2'$, and thus we get a homomorphism:
$$f : \stab(V_1, \dots, V_r, U_1, U_2; X) \ra \stab(V_1, \dots, V_r, U_1', U_2', T; W).$$
The new twist in this argument is that we are considering a configuration of vector spaces which are not in general position, as $T$ contains $U_1'$ and $U_2'$.

Lemma \ref{l:5} implies that $f$ has trivial kernel. Now assume that $\stab(V_1, \dots, V_r, U_1', U_2', T; W)$ has the expected dimension. Since $T$ contains $U_1'$ and $U_2'$, this expected dimension is
\begin{equation}\label{eqn-l9}
(n-k)^2 - 1 - \sum_{i=1}^r a_i(n-k-a_i) - m(n-k-m) - \sum_{j=1}^2 (b_j - k)(m+k-b_j).
\end{equation}
Using the equalities $m = b_1 + b_2 - k$ and $\sum_{i=1}^r a_i = n-k$, the expression (\ref{eqn-l9}) equals
$$n^2 - 1 - \sum_{i=1}^r a_i(n-a_i) - \sum_{j=1}^2 b_j(n-b_j).$$
Then Lemma \ref{l:ml} would imply that $\ul{\bf{d}}$ is dense.

Hence, it suffices to prove that $\stab(V_1, \dots, V_r, U_1', U_2', T; W)$ has the expected dimension. For every $1 \leq i \leq r$, let $W_i := V_1 + \dots + \wh{V}_i + \dots + V_r$ and $V_i' := T \cap W_i$. Then $(V_1', \dots, V_r', U_1', U_2'; T)$ is a generic configuration of vector spaces corresponding to the complement of the dimension vector $\ul{\bf{d}}'$. Restricting from $W$ to $T$, induces a homomorphism
$$g : \stab(V_1, \dots, V_r, U_1', U_2', T; W) \ra \stab(V_1', \dots, V_r', U_1', U_2'; T),$$
whose kernel is trivial by Lemma \ref{l:5}. If $\ul{\bf{d}}'$ is dense, then so is its complement, and hence the dimension of the image of $g$ is at most
$$m^2 - 1 - \sum_{i=1}^r a_i(m-a_i) - \sum_{j=1}^2 (b_j - k)(m + k - b_j),$$
which using $m = b_1 + b_2 - k$ and $\sum_{i=1}^r a_i = n-k$ simplifies to
$$n^2 - 1 - \sum_{i=1}^r a_i(n-a_i) - \sum_{j=1}^2 b_j(n-b_j)$$ as desired.
\end{proof}

\begin{lemma} \label{l:10}
Consider the dimension vector $\ul{\bf{d}} = (a_1, \dots, a_r; n)$. Assume that there are $k$ elements $i_1, \dots, i_k \in \{ 1, \dots, r \}$ such that $\sum_{j=1}^k a_{i_j} = (k-1)n$ and let $\ul{\bf{d}}'$ be the dimension vector obtained from \ul{\bf{d}} after replacing $a_{i_j}$ with $b_{i_j} := \sum_{\pa{t=1}{t \neq j}}^k a_{i_t} - (k-2)n$. Then $\ul{\bf{d}}$ is dense if and only if $\ul{\bf{d}}'$ is dense.
\end{lemma}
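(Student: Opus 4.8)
The plan is to observe first that the replacement is nothing but \emph{partial complementation}. Since $\sum_{t=1}^k a_{i_t} = (k-1)n$, the new dimension is
\[
b_{i_j} = \Big(\sum_{t=1}^k a_{i_t}\Big) - a_{i_j} - (k-2)n = (k-1)n - a_{i_j} - (k-2)n = n - a_{i_j},
\]
so $\ul{\bf{d}}'$ is obtained from $\ul{\bf{d}}$ by replacing the $k$ chosen spaces by complementary-dimension ones, and these new dimensions satisfy $\sum_{j=1}^k (n-a_{i_j}) = n$. Write $c_j := a_{i_j}$ for the chosen dimensions. The key geometric idea is to encode a generic configuration of the chosen spaces by a direct sum decomposition of $V$: given $(U_1,\dots,U_k)$ with $\dim U_j = c_j$, set $A_j := \bigcap_{t\neq j}U_t$. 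For a generic configuration the $k-1$ spaces $\{U_t\}_{t\neq j}$ meet in the expected dimension $\sum_{t\neq j}c_t - (k-2)n = n-c_j$, so $\dim A_j = n-c_j$ and $\sum_j \dim A_j = n$.

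I would then show that generically $V = A_1 \oplus \cdots \oplus A_k$ and, conversely, that $U_j = \sum_{t\neq j}A_t$: the inclusion $A_t \subseteq U_j$ for $t\neq j$ is automatic (as $j$ is one of the indices $s\neq t$ defining $A_t$), and the dimension count $\sum_{t\neq j}(n-c_t) = c_j = \dim U_j$ forces equality once the $A_t$ are in direct sum. Thus on a dense open locus the assignments $(U_1,\dots,U_k)\mapsto (A_1,\dots,A_k)$ and $(A_1,\dots,A_k)\mapsto (\sum_{t\neq 1}A_t,\dots,\sum_{t\neq k}A_t)$ are mutually inverse, $\pgl{n}$-equivariant, and carry generic configurations of type $(c_1,\dots,c_k)$ to generic direct-sum decompositions with $\dim A_j = n-c_j$. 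Leaving the untouched spaces $a_i$ (with $i\notin\{i_1,\dots,i_k\}$) fixed, this produces a $\pgl{n}$-equivariant bijection between dense open subsets of $\prod_i Gr(d_i;n)$ and $\prod_i Gr(d_i';n)$.

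The crucial consequence is that corresponding configurations have \emph{literally the same} stabilizer in $\pgl{n}$: if $M$ fixes every $U_t$ then $M(A_j)=\bigcap_{t\neq j}M(U_t)=A_j$, and if $M$ fixes every $A_t$ then $M(U_j)=\sum_{t\neq j}M(A_t)=U_j$, while the untouched spaces belong to both configurations. Hence $\stab(U_1,\dots,U_k,\mathrm{rest}) = \stab(A_1,\dots,A_k,\mathrm{rest})$ as subgroups of $\pgl{n}$. Moreover the target quantity in Lemma \ref{l:ml} is unchanged, since $c_j(n-c_j) = (n-c_j)\big(n-(n-c_j)\big)$ gives $\sum_i d_i(n-d_i) = \sum_i d_i'(n-d_i')$. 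To finish: if $\ul{\bf{d}}$ is dense, choose a point of its dense orbit lying in the dense open locus above (both are dense open in an irreducible variety, hence meet); its stabilizer has the minimal dimension $(n^2-1)-\sum_i d_i(n-d_i)$, the corresponding type-$\ul{\bf{d}}'$ configuration has the same stabilizer dimension, and since this equals $(n^2-1)-\sum_i d_i'(n-d_i')$, Lemma \ref{l:ml} shows it lies in a dense orbit, so $\ul{\bf{d}}'$ is dense. The reverse implication is identical via the inverse map, yielding the equivalence.

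I expect the main obstacle to be the genericity bookkeeping of the second paragraph: confirming that for a generic configuration the intersections $A_j$ really have the expected dimension $n-c_j$ and are in direct sum position, and that the two assignments are mutually inverse on dense open loci. I plan to dispatch this by exhibiting one explicit configuration built from a fixed direct-sum decomposition $V = \bigoplus_j A_j$ (with $\dim A_j = n-c_j$) and setting $U_j := \sum_{t\neq j}A_t$; this single example simultaneously witnesses the expected intersection dimensions and the reconstruction identity $A_j = \bigcap_{t\neq j}U_t$, so the good locus is nonempty and therefore dense, while openness of the relevant dimension conditions is standard.
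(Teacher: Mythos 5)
Your proposal is correct and follows essentially the same route as the paper: replace the chosen spaces by the intersections $A_j = \bigcap_{t\neq j}U_t$, recover the originals as sums $U_j = \sum_{t\neq j}A_t$ via the numerical identity $\sum_{t\neq j}\dim A_t = \dim U_j$, note the stabilizers coincide and the target dimension counts agree, and conclude by Lemma \ref{l:ml}. Your write-up is in fact more careful than the paper's (which leaves the genericity bookkeeping and the stabilizer equality implicit), and your observation that the operation is partial complementation, $b_{i_j}=n-a_{i_j}$, is a clean simplification of the same arithmetic.
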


\begin{proof}
Let $(V_1, \dots, V_r; X)$ be a generic configuration of vector spaces corresponding to \ul{\bf{d}}. For every $1 \leq t \leq r$, set $U_t := V_t$ if $t \neq i_1, \dots, i_k$, and $U_t := \cap_{\pa{s=1}{s \neq j}}^k V_{i_s}$ if $t = i_j$ for some $1 \leq j \leq k$. Then, by our numerical assumptions, the configuration $(U_1, \dots, U_r; X)$ corresponds to the dimension vector $\ul{\bf{d}}'$ and  the original vector spaces $V_i$ can be recovered from the vector spaces $U_j$. Indeed, for any $1 \leq j \leq k$, we have
$$\sum_{\pa{t=1}{t \neq j}}^k b_{i_t} = a_{i_j} + (k-2) \sum_{t=1}^k a_{i_t} - (k-1)(k-2)n = a_{i_j}$$
and thus
$$V_{i_j} = \cup_{\pa{t=1}{t \neq j}}^k U_{i_t}.$$
This implies that the density/sparsity of $\ul{\bf{d}}$ is equivalent to that of $\ul{\bf{d}}'$.
\end{proof}

\begin{rem}
The converses of Lemmas \ref{l:3}, \ref{l:6}, \ref{l:7}, \ref{l:8} and \ref{l:9} are easily seen to hold. In each case, the generic subspaces with invariants $\ul{\bf{d}}'$ can be obtained via the construction in the proof. Hence, if $\ul{\bf{d}}'$ is not dense, $\ul{\bf{d}}$ is certainly not dense.
\end{rem}


\section{Dimension vectors with small length}\label{sec-length}

Our goal in this section is to characterize dense dimension vectors of small length using the reduction lemmas. Recall that the length of a dimension vector $\ul{\bf{d}} = (d_1, \dots, d_k; n)$ is defined to be the number $k$. Every dimension vector of length one is dense since the Grassmannian  $Gr(k, n)$ is a quotient of \pgl{n}. In the following result, we show that most dimension vectors with length at most four are dense. Invariants of vectors of length 4 have been studied in \cite{sw}. Here we give a short self-contained treatment which we will use in later sections.

\begin{thm} \label{t:sl}
Let \ul{\bf{d}} be a dimension vector with length $k \leq 4$. Then $\ul{\bf{d}}$ is sparse if and only if $k = 4$ and $\ul{\bf{d}} = (a, b, c, d; n)$ with $a+b+c+d = 2n$.
\end{thm}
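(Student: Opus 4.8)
The plan is to treat the two directions separately, using the complement symmetry (Lemma \ref{l:c}) to halve the casework. Throughout I normalize each vector so that every entry satisfies $1 \le d_i \le n-1$ (an entry $0$ or $n$ gives a trivial $Gr(d;n)$-factor and is discarded), and I organize everything around the \emph{defect} $\delta(\ul{\bf{d}}) := \sum_{i=1}^k d_i - 2n$. For length-four vectors the complement negates the defect, $\delta(\ul{\bf{d}}^c) = -\delta(\ul{\bf{d}})$, and the asserted sparse locus $k=4$, $\sum d_i = 2n$ is exactly $\delta = 0$; this is the invariant that governs the whole proof (for four subspaces it is the defect of the $\tilde D_4$ quiver, whose isotropic root $(1,1,1,1;2)$ produces the continuous modulus).

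For the sparse direction ($k=4$, $\delta=0$) I expect a clean one-lemma reduction. Order $d_1 \le d_2 \le d_3 \le d_4$. Since $d_3+d_4 \ge d_1+d_2$ and the two sums add to $2n$, the inequality $d_1+d_2 \ge n$ forces $d_1=d_2=d_3=d_4=m=n/2$, i.e. $\ul{\bf{d}}=(m,m,m,m;2m)$, which is trivially sparse: $\sum d_i^2 = n^2$ gives $\sum d_i(n-d_i)=n^2>n^2-1$, violating \eqref{e:tdc}. Otherwise $d_1+d_2<n$ and I apply Lemma \ref{l:3} with spanning \emph{a}-spaces $\{d_1,d_2\}$ and \emph{b}-spaces $\{d_3,d_4\}$: here $\sum a_i = d_1+d_2 = n-k$ while $(n-d_3)+(n-d_4)=2n-(d_3+d_4)=d_1+d_2=n-k$, so the hypotheses hold with equality. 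The output $(d_1,d_2,d_3-k,d_4-k;n-k)$ with $k=n-d_1-d_2\ge 1$ again has length $4$ and defect $0$ (the quantity total$-2\cdot$ambient is preserved), and its entries stay in $[1,(n-k)-1]$ because each $d_i\le n-1$ (in particular $d_4\le n-1$ gives $d_3-k\ge 1$). As the ambient dimension strictly drops, iterating terminates at an all-equal $(m,m,m,m;2m)$ or at $(1,1,1,1;2)$, both trivially sparse; the converse of Lemma \ref{l:3} (the Remark following Lemma \ref{l:10}) then propagates sparsity back to $\ul{\bf{d}}$. This recovers and generalizes Example \ref{ex-1} via the chain $(1,2,2,3;4)\to(1,1,2,2;3)\to(1,1,1,1;2)$.

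For the dense direction I argue by induction on $n$. The cases $k\le 2$ are immediate: a single Grassmannian is homogeneous, and a pair of subspaces has only finitely many orbits (indexed by $\dim(U_1\cap U_2)$), so a dense orbit exists; alternatively complement to $\sum d_i \le n$ and apply Lemma \ref{l:2}. For $k=3$ and for $k=4$ with $\delta\ne 0$, after discarding trivial factors and choosing the complement (Lemma \ref{l:c}) that makes the configuration most amenable, I apply whichever of Lemmas \ref{l:3}, \ref{l:8}, \ref{l:9} strictly lowers the ambient dimension: Lemma \ref{l:3} (with the smaller spaces forming the spanning \emph{a}-group and the larger ones as \emph{b}'s) when enough small spaces are present, and Lemmas \ref{l:8} or \ref{l:9} when exactly two spaces are large but together span at most the ambient space. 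The numerical Lemma \ref{l:nl} feeds this: in the regime where all $d_i\le n/2$ it certifies that the smallest spaces sum to less than $n$, supplying the required small \emph{a}-group while the two largest sum to at most $n$. The induction bottoms out at $\sum d_i \le n+1$, dense by Lemma \ref{l:4.5}.

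The hard part will be the combinatorial uniformity of the dense direction: unlike the $\delta=0$ case, where one grouping in Lemma \ref{l:3} does everything, here no single move works for all dimension profiles. Configurations with two large spaces, such as $(1,1,n-2,n-2;n)$ for $n\ge 5$, admit no admissible reduction on the $\sum d_i<2n$ side and must first be passed to their complement $(n-1,n-1,2,2;n)$ before Lemma \ref{l:3} applies; likewise the near-balanced $(t,t,t,t;n)$ with $t>n/2$ only reduces after complementing. I would therefore split the induction according to how many entries exceed $n/2$ (equivalently by the sign and magnitude of $\delta$), verify in each regime that at least one reduction lemma is admissible and strictly decreases $n$, and check that a degenerate output — a subspace that fills the reduced ambient space — is harmless, being a trivial $Gr(n';n')$ factor that drops out. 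Confirming that this process never creates a \emph{new} obstruction, which is precisely the assertion that for length $\le 4$ the only obstruction is the $\delta=0$ cross-ratio, is the real content of the theorem.
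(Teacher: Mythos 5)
Your sparse direction is complete and essentially reproduces the paper's argument: with the entries ordered, $d_1+d_2\ge n$ together with $\sum_i d_i=2n$ forces $(m,m,m,m;2m)$, which violates \eqref{e:tdc}; otherwise Lemma \ref{l:3} with $a$-group $\{d_1,d_2\}$ applies (its hypothesis $(n-d_3)+(n-d_4)\le d_1+d_2$ holds, with equality, precisely because the defect is zero), preserves the defect, strictly drops the ambient dimension, and the converse direction supplied by the Remark after Lemma \ref{l:10} carries sparsity back up the chain. This is the same reduction, with the same justification, as the paper's induction on the ambient dimension.

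The dense direction, however, contains a genuine gap: it is a strategy rather than a proof, as you in effect concede when you write that confirming the process ``never creates a new obstruction \dots is the real content of the theorem.'' Two verifications are missing, and they are exactly where the paper's work lies. First, admissibility and exhaustiveness: your preferred move (two smallest entries as $a$'s, two largest as $b$'s in Lemma \ref{l:3}) is \emph{never} available in the dense regime, since its hypothesis $(n-c)+(n-d)\le a+b$ is equivalent to $a+b+c+d\ge 2n$; so after complementing into the range $n<a+b+c+d<2n$ with $a+b=n-k$ one needs the finer trichotomy the paper sets up: $c<k$ (Lemma \ref{l:3} with three $a$'s), $c+d=n$ with $k\le c,d$ (Lemma \ref{l:8}), and $c+d<n$ with $k\le c,d$ (Lemma \ref{l:9} followed by complementation). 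Your sketch, which keys the choice of lemma to ``how many entries exceed $n/2$,'' misses cases such as $(1,1,3,3;7)$: all entries are at most $n/2$, yet $k=5>3$ makes Lemmas \ref{l:8} and \ref{l:9} inadmissible with your grouping, and only the $c<k$ branch works. Second, and more seriously, Lemmas \ref{l:3}, \ref{l:8}, \ref{l:9} transfer density only from $\ul{\bf{d}}'$ to $\ul{\bf{d}}$, so each reduction must be checked, by explicit arithmetic, to output a vector whose total dimension differs from twice its new ambient dimension; otherwise the induction hypothesis says nothing, since one may have reduced a putatively dense vector onto the sparse locus. The paper performs this computation in every case of its proof (for instance, after Lemma \ref{l:9} plus a complement the new total is $<2m-k<2m$); your proposal asserts that the sign of the defect governs everything but verifies defect bookkeeping only for the defect-zero grouping. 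Until both points are carried out case by case, the dense half of the theorem is unproven. (A smaller omission: the complement-invariant balanced case $(k,k,k;2k)$ of length three is not settled by your sketch; the paper handles it by an explicit stabilizer computation, though it can also be deduced from Lemma \ref{l:8} once one justifies discarding the degenerate factors in the reduced vector $(k,0,0;k)$.)
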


\begin{proof}
First, we show that all dimension vectors of length two and three are dense. Let $\ul{\bf{d}} = (a, b; n)$ be a dimension vector of length two. By taking the complement if necessary, we can assume that $a + b \leq n$. Then $\ul{\bf{d}}$ is dense by Lemma \ref{l:2}.

Now let $\ul{\bf{d}} = (a, b, c; n)$ be a dimension vector of length $3$. 
\begin{itemize}
\item{If $n = 2k$ and $a = b = c = k$,  let  $e_1, \dots, e_{2k}$ be a basis of $X$ and let $V_1= <e_i>_{i=1}^k$, $V_2=<e_i>_{i=k+1}^{2k}$ and $V_3= <e_i+e_{i+k}>_{i=1}^k$. Then $\stab(V_1, V_2, V_3; X)$ consists of matrices of the form
    $$M = \left( \begin{array}{cc}
    A & 0 \\
    0 & A
    \end{array} \right).$$
    Since the stabilizer has dimension $k^2-1$, Lemma \ref{l:ml} implies that $\ul{\bf{d}}$ is dense.}
\item{If $a$, $b$, and $c$ are not all equal to $n/2$, by taking the complement and rearranging if necessary, we can assume that $a+b = n-k < n$. If $c \leq k$, \ul{\bf{d}} is dense by Lemma \ref{l:2}. If $c > k$, then, by Lemma \ref{l:3}, the density of $\ul{\bf{d}}$  follows from the density of $(a, b, c-k; n-k)$. We are done since either we are in the first case and $\ul{\bf{d}}$ is dense, or we can inductively continue reducing the ambient dimension $n$.}
\end{itemize}

Finally, let  $\ul{\bf{d}} = (a, b, c, d; n)$ be a dimension vector of length four. We will show that $\ul{\bf{d}}$  is dense if and only if $a + b + c + d \neq 2n$. We begin by studying the case $a + b + c + d = 2n$.
\begin{itemize}
\item{First, the dimension vector $(k, k, k, k; 2k)$ is trivially sparse.}
\item{Next assume  that $a + b + c + d = 2n$, but  $a$, $b$, $c$, and $d$ are not all equal. Then, by taking the complement and rearranging if necessary, we can assume that $a + b < n$. By Lemma \ref{l:3}, the density of $\ul{\bf{d}}$ reduces to the density of $$\ul{\bf{d}}' = (a, b, a+b+c-n, a+b+d-n; a+b).$$ Since
    $a+b+(a+b+c-n)+(a+b+d-n) = 2(a+b),$
by induction on the dimension of the ambient space, $\ul{\bf{d}}'$ is sparse. Therefore,  $\ul{\bf{d}}$ is sparse.}
\end{itemize}

Now we suppose that $a + b + c + d \neq 2n$ and show by induction on $n$ that $\ul{\bf{d}}$ is dense. By taking the complement if necessary, we can assume that $a + b + c + d < 2n$.
\begin{itemize}
\item{If $a + b + c + d \leq n$, then \ul{\bf{d}} is dense by Lemma \ref{l:2}.}
\item{If sum of the two larger dimensions is bigger than $n$, we take the complement and rearrange so that $a + b + c + d > 2n$ and $a + b = n-k < n$. This implies that
    $$(n-c) + (n-d) = 2n - (c + d) < a + b = n-k.$$
    Thus, by Lemma \ref{l:3}, \ul{\bf{d}} is dense if $\ul{\bf{d}}' = (a, b, c-k, d-k; n-k)$ is dense. Since
    $$a + b + (c-k) + (d-k) = (a + b + c + d) - 2k > 2(n-k),$$
    by induction on $n$, $\ul{\bf{d}}'$ is dense and we are done. In the remaining cases, we may assume that $n < a + b + c + d < 2n$, $a + b = n-k < n$, and $c + d \leq n$ (suppose for simplicity that we have ordered the dimensions so that $a \leq b \leq c \leq d$).}
\item{If $c < k$, then $a + b + c = n - t < n$. Lemma \ref{l:3} implies that $\ul{\bf{d}}$ is dense if $\ul{\bf{d}}' = (a, b, c, d-t; n-t)$ is dense. Since
    $$a + b + c + (d-t) = (n-t) + (d - t) < 2(n-t),$$
  $\ul{\bf{d}}'$ is dense by induction. Therefore, in the following cases we may assume that $k \leq c,d$.}
\item{If $c + d = n$, then by Lemma \ref{l:8}, $\ul{\bf{d}}$ is dense if $\ul{\bf{d}}' = (a, b, c-k, d-k; n-k)$ is dense. Since
    $$a + b + (c-k) + (d - k) = (a + b + c + d) - 2k < 2(n-k),$$
    $\ul{\bf{d}}'$ is dense by induction.}
\item{Finally, assume that $c + d < n$. Then Lemma \ref{l:9} implies that $\ul{\bf{d}}$ is dense if $(a,b,c,d;m)$ is dense, where $m=c+d-k$. By taking the complement, the density of the latter is equivalent to the density of $\ul{\bf{d}}' = (m-a, m-b, c-k, d-k; m)$. Since
    $$(m-a) + (m-b) + (c-k) + (d-k) = 2m - (a+b) + (c+d) - 2k$$ $$ < 2m - (n-k) + n - 2k = 2m - k < 2m,$$
    by induction, we conclude that $\ul{\bf{d}}'$ is dense.}
\end{itemize}
\end{proof}

\begin{cor} \label{c:sl}
Let $\ul{\bf{d}} = (d_1, \dots, d_k; n)$ be a dimension vector such that there is a subsequence of $d_1, \dots, d_k$ with total sum $2n$. Then $\ul{\bf{d}}$ is sparse.
\end{cor}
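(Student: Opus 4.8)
The plan is to reduce Corollary \ref{c:sl} to the length-four case of Theorem \ref{t:sl}. Let $J \subseteq \{1, \dots, k\}$ be the index set of a subsequence with $\sum_{j \in J} d_j = 2n$, and let $\ul{\bf{e}} = (d_j)_{j \in J}$ be the corresponding dimension vector, in the same ambient space of dimension $n$. Since $\ul{\bf{d}}$ dominates $\ul{\bf{e}}$, Lemma \ref{2.7} shows that it suffices to prove $\ul{\bf{e}}$ is sparse. I may therefore assume from the start that $\ul{\bf{e}} = (e_1, \dots, e_m; n)$ satisfies $\sum_{i=1}^m e_i = 2n$, with $1 \le e_i \le n-1$ for every $i$.

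The core of the argument is to partition the entries of $\ul{\bf{e}}$ into exactly four nonempty groups $G_1, G_2, G_3, G_4$ with $g_t := \sum_{i \in G_t} e_i \le n-1$ for each $t$. Granting this, the hypothesis $g_t \le n$ of Lemma \ref{l:4} is satisfied, so that lemma reduces the sparsity of $\ul{\bf{e}}$ to the sparsity of the collapsed vector $(g_1, g_2, g_3, g_4; n)$. The latter has $g_1 + g_2 + g_3 + g_4 = 2n$, and each $g_t$ is a genuine dimension with $1 \le g_t \le n-1$, so it is sparse by Theorem \ref{t:sl}. Chaining the two reductions yields the corollary.

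The one real obstacle is thus the combinatorial grouping claim: a multiset of at least four positive integers, each at most $n-1$ and summing to $2n$, can be split into four parts each of sum at most $n-1$. I would establish this by a greedy, largest-first load balancing: list $e_1 \ge e_2 \ge \dots \ge e_m$ and place each entry into whichever current group has the smallest sum. If placing some $e_j = x$ overflowed its group, then the lightest group, and hence (since an empty group would accept $x$) all four groups, would already be nonempty with sum exceeding $(n-1)-x$; summing these four sums against the fact that only $2n - x$ has been placed gives $3x > 2n-4$. Since the four occupied groups together with $x$ account for at least five entries, each of size $\ge x > (2n-4)/3$, their total exceeds $5(2n-4)/3$, which is larger than $2n$ once $n \ge 5$, a contradiction; the cases $n \le 4$ are checked directly (using that the entries are integers, so $x$ is forced up to at least $2$ when $n=4$). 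Finally, the requirement that the subsequence contain at least four terms is exactly what supplies four nonempty groups: any $\ul{\bf{e}}$ with $\sum e_i = 2n$ and entries in $[1,n-1]$ has at least three entries, and a three-entry subsequence collapses only to a length-three vector, which Theorem \ref{t:sl} shows is always dense; so the content of the corollary lies precisely in the presence of four or more subspaces to collapse.
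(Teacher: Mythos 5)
Your proposal is correct and takes essentially the same route as the paper, whose entire proof is the one-line citation of Theorem \ref{t:sl} and Lemma \ref{l:4}: reduce to the subsequence (Lemma \ref{2.7}), collapse it into four groups (Lemma \ref{l:4}), and apply the length-four sparsity criterion of Theorem \ref{t:sl}; your greedy load-balancing argument simply supplies the partition into four parts of sum at most $n-1$ that the paper leaves implicit (and your strict inequality covers the boundary case $n=5$, where $5(2n-4)/3$ equals rather than exceeds $2n$). Your closing observation is also correct and is worth emphasizing: as literally stated the corollary fails when the only subsequence summing to $2n$ has three terms --- for instance $(2,2,2;3)$ and $(1,2,2,2;3)$ are dense by Theorem \ref{t:sl} even though $2+2+2=2n$ --- so the hypothesis must be read as requiring a subsequence of at least four terms, which is exactly the case your argument covers and the only case in which the statement is true.
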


\begin{proof}
This follows from Theorem \ref{t:sl} and Lemma \ref{l:4}.
\end{proof}


\section{Dimension vectors with small size}\label{sec-size}

 In this section we study dimension vectors of a given size, without any restriction on the length. We illustrate how the techniques we developed in Section \ref{s:rt} can be employed in classifying dense dimension vectors, and fully classify dense dimension vectors with size at most four. We begin by the following useful consequence of Lemma \ref{l:nl}.

\begin{prop} \label{p:r}
Let $\ul{\bf{d}} = (d_1, \dots, d_r; n)$ be a dimension vector of size $| \ul{\bf{d}} | = k$ with $2k \leq n$. Then \ul{\bf{d}} is either trivially sparse, or is dense by Lemma \ref{l:2}, or the density/sparsity problem for $\ul{\bf{d}}$ can be reduced to the density/sparsity problem for a dimension vector with smaller ambient dimension.
\end{prop}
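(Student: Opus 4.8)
The plan is to dispose of the two ``easy'' alternatives first and then show that in every remaining case one of the reduction lemmas of Section \ref{s:rt} applies. First I would observe that if $\sum_{i=1}^r d_i \leq n$ then $\ul{\bf{d}}$ is dense by Lemma \ref{l:2}, so I may assume $\sum_{i=1}^r d_i > n$; since each $d_i \leq k \leq n/2$, the inequality $\sum_{i=1}^r d_i > n \geq 2k$ already forces $r \geq 3$. I would also assume $\ul{\bf{d}}$ is not trivially sparse, since otherwise we are in the first alternative. Reordering so that $d_1 \leq \cdots \leq d_r$, the hypothesis $2k \leq n$ (i.e.\ $|\ul{\bf{d}}| \leq n/2$) lets me invoke Lemma \ref{l:nl}, applied with the two largest entries $d_{r-1}, d_r$ singled out, which yields the key numerical fact $\sum_{i=1}^{r-2} d_i < n$. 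Writing $\kappa := n - \sum_{i=1}^{r-2} d_i > 0$, my goal is to exhibit $\ul{\bf{d}}$ as an instance of Lemma \ref{l:3}, \ref{l:8}, or \ref{l:9}, all of which strictly lower the ambient dimension, and all of which give genuine density/sparsity equivalences by the Remark following Lemma \ref{l:10}.

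The argument then splits according to the size of $b_1 := d_{r-1}$ relative to $\kappa$, equivalently according to whether $\sum_{i=1}^{r-1} d_i$ is $< n$ or $\geq n$. In the first case, $\sum_{i=1}^{r-1} d_i < n$, I take $d_1, \dots, d_{r-1}$ as the ``small'' subspaces and the single largest entry $d_r$ as the only ``big'' one, and apply Lemma \ref{l:3} with $s = 1$: setting $\kappa' := n - \sum_{i=1}^{r-1} d_i > 0$, its hypothesis $n - d_r \leq n - \kappa'$ reads $d_r \geq \kappa'$, which is equivalent to $\sum_{i=1}^r d_i \geq n$ and hence holds. This reduces $\ul{\bf{d}}$ to a vector of ambient dimension $n - \kappa' < n$. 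In the second case, $\sum_{i=1}^{r-1} d_i \geq n$, one checks directly that $b_1 = d_{r-1} \geq \kappa$, and a fortiori $b_2 := d_r \geq b_1 \geq \kappa$, so that both of the two largest entries qualify as ``big'' subspaces over the small ones $d_1, \dots, d_{r-2}$, which sum to $n - \kappa$. Since $b_1 + b_2 \leq 2k \leq n$, I split once more: if $b_1 + b_2 = n$ I apply Lemma \ref{l:8}, and if $b_1 + b_2 < n$ I apply Lemma \ref{l:9} with $m = b_1 + b_2 - \kappa$, where $0 < \kappa \leq m < n$; in either subcase the ambient dimension drops.

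The real engine is Lemma \ref{l:nl}: once it guarantees $\sum_{i=1}^{r-2} d_i < n$, everything else is bookkeeping to match the precise hypotheses ($\kappa \leq b_1, b_2$ and $b_1 + b_2 \leq n$) of the appropriate reduction lemma. Accordingly, the step I expect to demand the most care is verifying these hypotheses uniformly, in particular the dichotomy $b_1 \geq \kappa$ versus $b_1 < \kappa$ and the translation of each such inequality into a statement about partial sums of the $d_i$. The bound $b_1 + b_2 \leq n$ comes for free from $2k \leq n$, but it is exactly what forces us into Lemmas \ref{l:8}/\ref{l:9} (two big parts, where the co-sum condition of Lemma \ref{l:3} fails) rather than Lemma \ref{l:3} itself, so I would keep the one-big-part and two-big-part cases cleanly separated and confirm in each that the reduced entries stay in the admissible range.
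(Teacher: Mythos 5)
Your proposal is correct and follows essentially the same route as the paper: invoke Lemma \ref{l:nl} to get $\sum_{i=1}^{r-2} d_i < n$, dispose of the case $\sum_{i=1}^r d_i \leq n$ via Lemma \ref{l:2}, and otherwise reduce the ambient dimension via Lemma \ref{l:3} (one large entry) or Lemmas \ref{l:8}/\ref{l:9} (two large entries, split by whether $d_{r-1}+d_r$ equals or is less than $n$). Your explicit verification of the lemmas' hypotheses, and your appeal to the Remark after Lemma \ref{l:10} for the converse directions, only spell out details the paper leaves implicit.
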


\begin{proof}
Let $\ul{\bf{d}}$ be as in the statement of the proposition and suppose that it is not trivially sparse. Then, assuming that the $d_i$'s are arranged in increasing order, by Lemma \ref{l:nl}, we have
$$\sum_{i=1}^{r-2} d_i < n.$$
This leads to the following cases:
\begin{enumerate}
\item[(I)]{If $\sum_{i=1}^r d_i \leq n$, then \ul{\bf{d}} is dense by Lemma \ref{l:2}.}
\item[(II)]{If $\sum_{i=1}^{r-1} d_i = n-k < n$ and $k < d_r$, then we can apply Lemma \ref{l:3} to reduce the density problem for \ul{\bf{d}} to the density problem for $(d_1, \dots, d_{r-1}, d_r - k; n-k)$.}
\item[(III)]{If $\sum_{i=1}^{r-2} d_i = n-k < n$ and $k \leq d_{r-1}, d_r$, then by  Lemma \ref{l:8} or Lemma \ref{l:9} (depending on whether $d_{r-1} + d_r$ is equal to or less than $n$), we reduce the density problem of $\ul{\bf{d}}$ to one with smaller ambient dimension.}
\end{enumerate}
\end{proof}

Suppose  we want to determine if a dimension vector $\ul{\bf{d}} = (d_1, \dots, d_r; n)$ with $| \ul{\bf{d}} | \leq t$ is dense or sparse. By Proposition \ref{p:r}, if $n \geq 2 | \ul{\bf{d}} |$ and the problem is not trivially answered, this problem can be reduced to one with smaller ambient dimension. Lemmas \ref{l:2}, \ref{l:3}, \ref{l:8} and \ref{l:9} used in the proof of Proposition \ref{p:r} preserve the size of the dimension vectors. Hence, we may assume that $| \ul{\bf{d}}| < n < 2 | \ul{\bf{d}} |$. On the other hand, as the minimum value for $d_i(n-d_i)$, with $1 \leq d_i \leq n-1$, is $n-1$, if $r > n+1$ we have:
$$\sum_{i=1}^r d_i (n-d_i) > (n-1)(n+1) = n^2 - 1,$$
and hence $\ul{\bf{d}}$ is trivially sparse. Therefore, we may assume that $r \leq n+1$. Furthermore, if $r = n+1$, the same calculation shows that if any of $d_i$'s are not equal to $1$ or $n-1$, then $\ul{\bf{d}}$ is trivially sparse. Hence, the density/sparsity problem for dimension vectors of bounded size is reduced to a finite collection of low-dimensional cases. Here we will demonstrate how to use the results of the previous sections to classify all dense vectors with $| \ul{\bf{d}} | \leq 4$.

In the following, we will frequently use the exponential notation for dimension vectors (for example the dimension vector $(1,1,2,2,2,4;7)$ will be denoted as $(1^2, 2^3, 4; 7)$). We will also use the following simple observation.

\begin{lemma} \label{l:m}
For any $n \geq 2$, the dimension vector $(1^n, n-1; n)$ is dense.
\end{lemma}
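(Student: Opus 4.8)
The plan is to apply Lemma \ref{l:ml} directly, by exhibiting a single configuration of $n$ lines and one hyperplane whose stabilizer is trivial. First I would record the dimension count. Since $\mdim(Gr(1;n)) = \mdim(Gr(n-1;n)) = n-1$, the product $\prod_{i=1}^n Gr(1;n) \times Gr(n-1;n)$ has dimension $n(n-1) + (n-1) = (n-1)(n+1) = n^2 - 1 = \mdim(\pgl{n})$. Thus the inequality \eqref{e:tdc} holds with equality, and by Lemma \ref{l:ml} the vector $(1^n, n-1; n)$ is dense precisely when some configuration $(U_1, \dots, U_n, H)$, with the $U_i$ lines and $H$ a hyperplane, has a $0$-dimensional (equivalently, trivial) stabilizer in $\pgl{n}$.

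Second, I would write down the configuration explicitly. Fix a basis $e_1, \dots, e_n$ of the ambient space $X$, take $U_i = \langle e_i \rangle$ for $1 \leq i \leq n$, and take the hyperplane $H = \{ x \in X : x_1 + \cdots + x_n = 0 \}$, which is spanned by $e_1 - e_2, \dots, e_{n-1} - e_n$. A matrix $M$ preserving every coordinate line $\langle e_i \rangle$ must be diagonal, say $M = \mathrm{diag}(\lambda_1, \dots, \lambda_n)$; hence the stabilizer of the $n$ lines alone is the maximal torus, of dimension $n-1$. Imposing that $M$ also preserve $H$ gives $M(e_i - e_{i+1}) = \lambda_i e_i - \lambda_{i+1} e_{i+1}$, whose coordinate sum is $\lambda_i - \lambda_{i+1}$; membership in $H$ then forces $\lambda_i = \lambda_{i+1}$ for all $i$. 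Therefore all $\lambda_i$ coincide, $M$ is scalar, and $\stab(U_1, \dots, U_n, H)$ is trivial. Lemma \ref{l:ml} now yields density.

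There is no serious obstacle here; the only point requiring (routine) care is checking that the single hyperplane suffices to collapse the entire $(n-1)$-dimensional torus down to the scalars, which is precisely the computation above and is where the position of $H$ relative to the coordinate lines enters. I note that one can bypass the explicit computation entirely: the vector $(1^n, n-1; n)$ satisfies the hypotheses of Lemma \ref{l:7} with $a_i = 1$ and $b = n-1$, since $\sum_{i=1}^n a_i = n$, $|\ul{\bf{d}}| = n-1 = b$, and $a_i + b = n \leq n$. Hence its density reduces to that of $(1^n; n-1)$, which is dense by Lemma \ref{l:4.5} because $\sum_{i=1}^n 1 = n \leq (n-1) + 1$. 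Either route completes the proof.
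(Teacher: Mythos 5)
Your main argument is precisely the paper's proof: the same configuration $V_i = \langle e_i \rangle$, $H = \{x : x_1 + \cdots + x_n = 0\}$, followed by Lemma \ref{l:ml}, with the triviality of the stabilizer (which the paper dismisses as ``easy to check'') verified correctly via the diagonal-matrix and coordinate-sum computation. Your alternative route --- Lemma \ref{l:7} with $a_i = 1$, $b = n-1$ reducing to $(1^n; n-1)$, which is dense by Lemma \ref{l:4.5} --- is also valid and non-circular, since both of those lemmas are established before this one and independently of it; the only thing it gives up relative to the direct computation is an explicit representative of the dense orbit.
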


\begin{proof}
Let $X$ be an $n$-dimensional vector space with basis $\mc{B} = \langle e_1, \dots, e_n \rangle$. Consider the configuration $(V_1, \dots, V_n, U; X)$ of vector spaces with $V_i = \langle e_i \rangle$ for $1 \leq i \leq n$, and $U = \{ (x_1, \dots, x_n) : \sum_{i=1}^n x_i = 0 \}$ (the coordinates are with respect to the basis \mc{B}). Then $(V_1, \dots, V_n, U; X)$ corresponds to the dimension vector $(1^n, n-1; n)$ and it is easy to check that $\stab(V_1, \dots, V_n, U; X)$ is trivial. Hence, we are done by Lemma \ref{l:ml}.
\end{proof}

We now begin the classification of dimension vectors of size  at most four.

\vspace{5mm}
{\noindent \large{$| \ul{\bf{d}} | = 1$:}} This case is trivial. The dimension vector has the form $\ul{\bf{d}} = (1^r; n)$, which is trivially sparse if $r > n+1$ and is dense otherwise (see \S \ref{sec-intro} and \cite[Section 1.6]{har92}).

\vspace{5mm}
{\noindent \large{$| \ul{\bf{d}} | = 2$:}} We first consider the case $n=3$, where Proposition \ref{p:r} does not apply. In this case $r\leq 4$, and by Theorem \ref{t:sl} all vectors of the form $(1^a,2^b;3)$ with $a+b\leq 4 $ are dense except for $(1^2,2^2;3)$.

Now assume that $n\geq 4$. A dimension vector $(1^a,2^b;n)$ is dense if $a+2b\leq n+1$ by Lemma \ref{l:4.5} and trivially sparse if $a+2b\geq n+4$ by Lemma \ref{l:nl}. If $a+2b=n+2$, then we check that $(1^a,2^b;n)$ is trivially sparse unless $a\leq 3$. If this holds, we use either Lemma \ref{l:8} (if $b\geq 2$ and $n=4$) or Lemma \ref{l:9} (if $b=1$ or $b\geq 2$ and $n\geq 5$) to reduce to the dimension vector $(1^a;2)$, which is dense if $a\leq 3$. If $a+2b=n+3$, then $(1^a,2^b;n)$ is trivially sparse unless $a\leq 5-n$, so $n\leq 5$ and we get only two vectors $(1,2^3;4)$ and $(2^4;5)$, both of which are dense by Theorem \ref{t:sl}.

Putting all this together, we obtain the following list of all dense vectors of size 2, listed by excess dimension:

\begin{itemize}

\item $(1^a,2^b;n)$ with $a+2b\leq n+1$.

\item $(1^a,2^b;n)$ with $a+2b=n+2$ and $a\leq 3$.

\item Finitely many vectors with $a+2b\geq n+3$: $(2^3;3)$, $(1,2^3;3)$, $(2^4;3)$, $(1,2^3;4)$ and $(2^4;5)$.
\end{itemize}

\vspace{5mm}
{\noindent \large{$| \ul{\bf{d}} | = 3$:}} We first consider the cases $n=4$ and $n=5$ where Proposition \ref{p:r} does not apply. We have the following cases:
\begin{itemize}
\item{$n=4$, $r \leq 4$: By Theorem \ref{t:sl}, all dimension vectors are dense except for $(1^2,3^2;4)$ and $(1,2^2,3;4)$.}
\item{$n=4$, $r=5$: Since $r = n+1$, $\ul{\bf{d}}$ is trivially sparse unless it is of the form $(1^a, 3^b; 4)$ with $a+b = 5$. On the other hand, if $2 \leq a,b$, then $\ul{\bf{d}}$ dominates $(1^2,3^2;4)$ and hence is sparse. The remaining cases are $(1^4,3;4)$ (dense by Lemma \ref{l:m}), $(1,3^4; 4)$ (dense by taking the complement), and $(3^5; 4)$ (dense by taking the complement).}
\item{$n=5$, $r \leq 4$: Theorem \ref{t:sl} implies that all dimension vectors are dense except for $(1,3^3;5)$ and $(2^2,3^2;5)$.}
\item{$n=5$, $r = 5$: All dimension vectors in this case are trivially sparse except for the following three, which can be reduced to smaller dimension vectors that are considered in the previous cases:
    $$(1^4,3;5) \xra{\text{Lemma \ref{l:3}}} (1^4, 2; 4) ~{\text{(trivially sparse)}},$$
    $$(1^3,2,3;5) \xra{\text{Lemma \ref{l:8}}} (1^4; 3) ~{\text{(dense)}},$$
    $$(1^3, 3^2; 5) \xra{\text{complement}} (2^2, 4^3; 5) \xra{\text{Lemma \ref{l:3}}} (2^2, 3^3; 4) ~{\text{(trivially sparse)}}.$$}
\item{$n=5$, $r=6$: All dimension vectors in this case are trivially sparse.}
\end{itemize}

We now assume that $n\geq 6$. A dimension vector $(1^a,2^b,3^c;n)$ is dense if $a+2b+3c\leq n+1$ by Lemma \ref{l:4.5} and trivially sparse if $a+2b+3c\geq n+6$ by Lemma \ref{l:nl}. Hence we need to consider the following cases:
\begin{itemize}
\item $a+2b+3c=n+2$. Here the dimension vector $(1^a,2^b,3^c;n)$ is not trivially sparse if $a\leq 3c+3$. In this case we apply Lemma \ref{l:3} to reduce to the dimension vector $(1^a,2^{b+1},3^{c-1};n-1)$, and hence by induction to the vector $(1^a,2^{b+c};n-c)$, which is dense if and only if $a\leq 3$.

\item $a+2b+3c=n+3$. In this case the dimension vector $(1^a,2^b,3^c;n)$ is not trivially sparse if $a+b\leq 4$. If this holds, then we use Lemma \ref{l:8} (when $c\geq 2$ and $n=6$) or Lemma \ref{l:9} (when $c=1$ or $n\geq 7$) to reduce to the dimension vector $(1^a,2^b;3)$, which is dense if $a+b\leq 4$ and $(a,b)\neq (2,2)$.

\item $a+2b+3c=n+4$. The dimension vector $(1^a,2^b,3^c;n)$ is not trivially sparse if $3a+4b+3c\leq 15$, so there are only finitely many possibilities. If the length $r=a+b+c\leq 4$, then the only possibilities are $(1,3^3;6)$, $(2^2,3^2;6)$ $(2,3^3;7)$, $(3^4;8)$, which are all dense by Theorem \ref{t:sl}. If $a+b+c=5$ then $b=0$, giving the three vectors $(1^2,3^3;7)$, $(1,3^4;9)$ and $(3^5;11)$.  By Lemma \ref{l:9} these vectors reduce to $(1^2,3^3;4)$, $(1,3^4;4)$ and $(3^5;4)$, respectively, so $(1^2,3^3;7)$ is sparse and $(1,3^4;9)$ and $(3^5;11)$ are dense.

\item $a+2b+3c=n+5$. In this case the dimension vector $(1^a,2^b,3^c;n)$ is trivially sparse unless $a+b\leq 7-n$, so the only possibilities are $(2,3^3;6)$ and $(3^4;7)$, which are dense by Theorem \ref{t:sl}.
\end{itemize}

Putting all this together, we obtain a complete list of dense vectors of size 3, listed by excess dimension.
\begin{itemize}
\item $(1^a,2^b,3^c;n)$ with $a+2b+3c\leq n+1$.
\item $(1^a,2^b,3^c;n)$ with $a+2b+3c=n+2$ and $a\leq 3$.
\item $(1^a,2^b,3^c;n)$ with $a+2b+3c=n+3$, $a+b\leq 4$ and $(a,b)\neq (2,2)$.
\item Finitely many vectors with $a+2b+3c\geq n+4$: $(2,3^2;4)$, $(2^3,3;4)$, $(1,2,3^2;4)$, $(3^3;4)$, $(1,3^3;4)$, $(2,3^3;4)$, $(3^4;4)$, $(1,3^4;4)$, $(2^3,3;5)$, $(1,2,3^2;5)$, $(3^3;5)$, $(1,3^3;5)$, $(2,3^3;5)$, $(3^4;5)$, $(1,3^3;6)$, $(2^2,3^2;6)$, $(2,3^3;6)$, $(2,3^3;7)$, $(3^4;8)$, $(1,3^4;9)$ and $(3^5;11)$.
\end{itemize}

Before proceeding further, we generalize the method that we used above. For a dimension vector of the form $(1^{e_1},\ldots,k^{e_k};n)$ with excess dimension $\sum_{i=1}^k ie_i-n\leq k$, we can reduce to a vector of smaller size.

\begin{thm} \label{th:l+1}Let $ \ul{\bf{d}}=(1^{e_1},\ldots,k^{e_k},n)$ be a dimension vector with total dimension $\sum_{i=1}^k ie_i=n+l+1$, where $l<k$ and $e_k>0$. Then $\ul{\bf{d}}$ is dense if and only if the dimension vector $(1^{e_1},\ldots,l^{e_l};l+1)$ is dense.

\end{thm}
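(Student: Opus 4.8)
The plan is to prove both implications at once by transforming $\ul{\bf{d}}$ into the target vector $(1^{e_1},\ldots,l^{e_l};l+1)$ through a chain of reductions, each of which is an equivalence (the converses of the reduction lemmas hold by the remark following Lemma \ref{l:10}). Write $S=\sum_{i=1}^{l} i e_i$ for the total dimension contributed by the subspaces of dimension at most $l$, and $N=\sum_{i=l+1}^{k} e_i$ for the number of subspaces of dimension strictly greater than $l$; note $N\geq 1$ since $e_k>0$, and that the excess dimension of $\ul{\bf{d}}$ equals $l+1$. I would first drive every ``large'' subspace (of dimension $>l+1$) down to dimension exactly $l+1$ without disturbing the small subspaces, and then collapse the ambient dimension to $l+1$, at which point every dimension-$(l+1)$ subspace becomes the whole space and disappears.

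For the first phase I would apply Lemma \ref{l:3} one large subspace at a time, always reducing the one of largest dimension first. If a subspace has dimension $d>l+1$, take it as the single space $b_1=d$ and all remaining subspaces as the $a_i$. Since the excess dimension is preserved at $l+1$, the total dimension equals $m+l+1$, where $m$ is the current ambient dimension, so $\sum_i a_i = m-(d-l-1)$ and Lemma \ref{l:3} applies with parameter $k'=d-l-1\geq 1$; its genericity hypothesis $\sum_j (m-b_j)\leq m-k'$ reduces to $0\leq l+1$. The effect is to replace $d$ by $d-k'=l+1$ and lower the ambient dimension by $k'$, leaving all other dimensions fixed; a direct check shows the excess dimension stays equal to $l+1$ throughout. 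Iterating, I arrive at the vector $\ul{\bf{d}}^*=(1^{e_1},\ldots,l^{e_l},(l+1)^N;n^*)$ of excess $l+1$, where $n^*=S+(l+1)(N-1)$.

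In the second phase I would collapse $\ul{\bf{d}}^*$ to ambient dimension $l+1$. When $N\geq 2$ and $n^*>2(l+1)$, choosing two of the dimension-$(l+1)$ subspaces as $b_1,b_2$ and everything else as the $a_i$ forces the deficiency $\sum_i a_i=n^*-(l+1)$ automatically, so Lemma \ref{l:9} applies and lowers the ambient dimension to exactly $b_1+b_2-(l+1)=l+1$ while keeping all subspace dimensions. In the resulting vector $(1^{e_1},\ldots,l^{e_l},(l+1)^N;l+1)$ every dimension-$(l+1)$ subspace equals the ambient space and may be deleted, leaving precisely $(1^{e_1},\ldots,l^{e_l};l+1)$. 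The boundary value $n^*=2(l+1)$ is handled identically using Lemma \ref{l:8} in place of Lemma \ref{l:9} (the two paired subspaces then drop to dimension $0$ and are likewise deleted).

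The main obstacle is the set of degenerate cases in which the pairing lemmas do not apply, namely $N=1$ together with the low-ambient situations $n^*\leq 2(l+1)$ (which a short computation confines to $N=2$, $S<l+1$); here the second phase must be argued separately. When $N=1$ the first phase already yields $(1^{e_1},\ldots,l^{e_l},l+1;S)$ with the small subspaces spanning the ambient space, and Lemma \ref{l:7} then deletes the single dimension-$(l+1)$ subspace and produces the target provided $S\geq 2l+1$; the finitely many configurations with $S<2l+1$ (equivalently, with very few small subspaces) fall under Lemma \ref{l:2}, Theorem \ref{t:sl}, or complementation via Lemma \ref{l:c}. I expect that verifying that these residual cases are exhaustively covered --- and, throughout the first phase, that the shrinking ambient space never drops below the dimension of a remaining large subspace (which can only fail in the trivial situation $S=0$) --- will be the most delicate bookkeeping in the argument.
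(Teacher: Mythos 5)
Your overall route is the same as the paper's: cap every dimension at $l+1$ by repeated use of Lemma \ref{l:3} (preserving excess dimension $l+1$), then collapse the ambient dimension to $l+1$ by applying Lemma \ref{l:9}, or Lemma \ref{l:8} at the boundary, to a pair of $(l+1)$-dimensional subspaces, discarding components that become $0$ or the whole space; your thresholds $n^*>2(l+1)$ and $n^*=2(l+1)$ are in fact the correct ones. The problem is the degenerate cases, where your proposal has two genuine gaps. First, the case $N=2$, $S\le l$ is flagged in your parenthetical but never argued anywhere. Second, for $N=1$ you rely on Lemma \ref{l:7}, which needs $\max_i a_i+(l+1)\le S$, and you assert that the leftover configurations ``fall under Lemma \ref{l:2}, Theorem \ref{t:sl}, or complementation via Lemma \ref{l:c}.'' That assertion is false: take $l=5$ and the vector $(1^5,5,6;10)$. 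It has length $7$, so Theorem \ref{t:sl} does not apply; its total dimension is $16$, so Lemma \ref{l:2} does not apply; its complement $(9^5,5,4;10)$ is no better; and Lemma \ref{l:7} fails since $5+6>10$. Yet the theorem requires you to prove this vector dense, because its target $(1^5,5;6)$ is dense (it is dominated by $(1^6,5;6)$, which is dense by Lemma \ref{l:m}, so Lemma \ref{2.7} applies).

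The missing idea --- and the device the paper itself uses in its $f=1$ case --- is to apply Lemma \ref{l:9} to a \emph{mixed} pair: take $b_1=l+1$ and $b_2$ equal to a small subspace of dimension $j$ with $(l+1)+j$ strictly less than the current ambient dimension. Then $k=j$, the new ambient dimension is $(l+1)+j-j=l+1$, and after deleting components equal to the whole space one lands exactly on the target $(1^{e_1},\ldots,l^{e_l};l+1)$. Applied to $(1^5,5,6;10)$ with $b_1=6$, $b_2=1$, this yields $(1^5,5;6)$ at once, and the same pairing disposes of your untreated family $N=2$, $S\le l$, e.g.\ $(1^5,6^2;11)$ reduces to $(1^5;6)$. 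Only after this tool is in place do the truly residual configurations (such as $(2,3,4;5)$ or $(1^{l+2},l+1;l+2)$) become a short list settled by Theorem \ref{t:sl} and Lemma \ref{l:m}. Without the mixed pairing, infinitely many cases (as $l$ varies) of the theorem remain unproved, so the proposal as written is incomplete.
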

\begin{proof}
First assume that $k\geq l+2$. In this case we can repeatedly apply Lemma \ref{l:3} to replace each such $k$ with $l+1$, without changing the excess dimension. Hence we can assume that $k=l+1$, and we have reduced $\ul{\bf{d}}$ to the dimension vector $(1^{e_1},\ldots,l^{e_l}, (l+1)^{f};m)$, where $f=\sum_{i=l+1}^{k} e_i>0$ and $\sum_{i=1}^l ie_i+(l+1)f=m+l+1$. If $f\geq 2$ and $m=2l+4$, then Lemma \ref{l:8} reduces to the dimension vector $(1^{e_1},\ldots, l^{e_l};l+1)$ (we drop extra components of dimensions $0$ and $l+1$). If $f\geq 2$ and $m\geq 2l+5$, then Lemma \ref{l:9} also reduces to the dimension vector $(1^{e_1},\ldots, l^{e_l};l+1)$. Finally, if $f=1$, then either $\ul{\bf{d}}$ has no more than two components (in which case the theorem holds trivially), or we can use Lemma \ref{l:9} to reduce to the dimension vector $(1^{e_1},\ldots,l^{e_l};l+1)$, which completes the proof.

\end{proof}

It follows that we've reduced the classification problem of dense dimension vectors $\ul{\bf{d}}=(a_1,\ldots,a_r;n)$ of size $|\ul{\bf{d}}|=l$ and total dimension $\sum_{i=1}^{r} a_i\leq n+l+1$ to the classification of all dense vectors in ambient dimension $l+1$. The following lemma shows that there is only a finite number of additional cases to consider.

\begin{lemma} \label{l:fi}
For a given $l$, there are finitely many dense dimension vectors $(1^{e_1},\ldots,l^{e_l};n)$ having excess dimension $\sum_{i=1}^l i e_i-n\geq l+1$.
\end{lemma}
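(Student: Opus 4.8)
The plan is to show that the single necessary inequality \eqref{e:tdc} already forces both the ambient dimension $n$ and all the exponents $e_i$ to be bounded in terms of $l$, once the excess dimension is at least $l+1$. Since a dense vector is in particular not trivially sparse, any dense $\ul{\bf{d}} = (1^{e_1}, \dots, l^{e_l}; n)$ satisfies
$$\sum_{i=1}^l e_i\, i(n-i) \leq n^2 - 1.$$
The whole argument will be an elementary manipulation of this one inequality together with the excess hypothesis.

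First I would rewrite the left-hand side in terms of the total dimension $T := \sum_{i=1}^l i e_i$. Expanding $i(n-i)=in-i^2$ gives $\sum_i e_i\, i(n-i) = nT - \sum_i i^2 e_i$, so the inequality becomes $n(T-n) \leq \sum_i i^2 e_i - 1$. Since $i \leq l$ for every index $i$ with $e_i > 0$, I bound $\sum_i i^2 e_i \leq l\sum_i i e_i = lT$, which rearranges to $T(n-l) \leq n^2 - 1$.

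Next I would feed in the excess hypothesis $T - n \geq l+1$, i.e. $T \geq n + l + 1$. Assuming $n > l$ (the case $n \leq l$ already bounds $n$), I may divide by $n-l$ to obtain $n + l + 1 \leq T \leq (n^2-1)/(n-l)$, hence $(n+l+1)(n-l) \leq n^2 - 1$. Expanding the left-hand side as $n^2 - l^2 + n - l$ and cancelling $n^2$ yields $n \leq l^2 + l - 1$, so $n$ is bounded purely in terms of $l$. Finally, with $n$ bounded, I would bound the total number of spaces: since $i(n-i) \geq 1$ for every admissible dimension $1 \leq i \leq n-1$, the same inequality \eqref{e:tdc} gives $\sum_i e_i \leq \sum_i e_i\, i(n-i) \leq n^2 - 1$, so each $e_i$ is bounded as well. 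As there are finitely many choices of $n \leq l^2 + l - 1$ together with bounded exponents $e_1, \dots, e_l$, only finitely many such dense vectors exist.

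I do not expect a genuine obstacle, since the argument reduces entirely to bookkeeping with \eqref{e:tdc}; the only point needing minor care is the case split $n \leq l$ versus $n > l$ when dividing by $n-l$. Conceptually, the key phenomenon is that an excess of at least $l+1$ prevents the linear term $nT$ from keeping pace with the quadratic growth of $n^2$ — precisely the balance that \emph{does} persist for the infinite families of small excess treated by Theorem \ref{th:l+1}, which is why that regime must be excluded here.
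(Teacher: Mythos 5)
Your proof is correct, and it takes a genuinely different (and more self-contained) route than the paper's. Both arguments start from the fact that a dense vector must satisfy \eqref{e:tdc}, and both expand $\sum_i e_i\, i(n-i) = nT - \sum_i i^2 e_i$ with $T = \sum_i i e_i$, but they diverge after that. The paper first reduces to $n \geq 2l$ (invoking finiteness of dense vectors in each fixed ambient dimension), then uses Lemma \ref{l:nl} to show that excess $\geq 2l+1$ forces trivial sparsity, and finally, for each of the finitely many admissible excess values $k+1$ with $l \leq k < 2l$, rewrites \eqref{e:tdc} as $\sum_{i=1}^l (k+1-i)\,i\,e_i \leq k(k+2)$; since the coefficients $(k+1-i)i$ are positive (this is where $k \geq l \geq i$ enters), each $e_i$ is bounded, and $n = T-(k+1)$ is then determined by the exponents. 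You instead compress everything into one inequality: bounding $\sum_i i^2 e_i \leq lT$ gives $T(n-l) \leq n^2-1$, which together with $T \geq n+l+1$ forces $n \leq l^2+l-1$; the finiteness per fixed ambient dimension (which the paper also uses, just at the opposite end of its proof) then finishes. You thereby avoid Lemma \ref{l:nl} and the stratification by excess entirely, and you obtain an explicit bound on the ambient dimension, which the paper never states. What the paper's version buys in exchange is a sharper tool for enumeration: the weighted inequality $\sum_i (k+1-i)\,i\,e_i \leq k(k+2)$ is what the authors mean when they say, in the size-four classification, that the remaining vectors ``can be found by solving the inequality in Lemma \ref{l:fi}''---it pins down the exponents far more tightly than the crude bound $\sum_i e_i \leq n^2-1$ ranged over all $n \leq l^2+l-1$.
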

\begin{proof}
We can assume that $n\geq 2l$, since there are finitely many dense dimension vectors in a given ambient dimension. Assume that $\sum_{i=1}^l i e_i= n+k+1$ where $k\geq l$. By Lemma \ref{l:nl}, if $k \geq 2l$, the dimension vector is trivially sparse. Hence, it suffices to prove that there are finitely many dense dimension vectors for each $l \leq k < 2l$. The dimension vector $(1^{e_1},\ldots,l^{e_l};n)$ is trivially sparse unless
$$
\displaystyle\sum_{i=1}^l e_i i(n-i)\leq n^2-1.
$$
Using $\sum_{i=1}^l i e_i= n+k+1$, we can reexpress this inequality as $$n(n+k+1) - \sum_{i=1}^l i^2 e_i = n^2 - (k+1)^2 + \sum_{i=1}^l (k+1-i)ie_i \leq n^2-1.$$ Hence, $$\sum_{i=1}^l  (k+1-i)i e_i \leq k(k+2).$$ Since the coefficient of each $e_i$ is positive, there are finitely many such dense dimension vectors.
\end{proof}

We now give the full classification of dense dimension vectors of size $4$.

{\noindent \large{$| \ul{\bf{d}} | = 4$:}} We first consider all cases for which Proposition \ref{p:r} does not apply, namely when the ambient dimension $n$ is $5$,  $6$, or $7$. Then we consider all possibilities for the length $r$ of \ul{\bf{d}}.
\begin{itemize}
\item{$n=5$, $r \leq 4$: Theorem \ref{t:sl} implies that in this case all dimension vectors are dense except for $(1^2,4^2;5)$, $(1,2,3,4;5)$, and $(2^3,4;5)$.}
\item{$n=5$, $r=5$: We consider those dimension vectors that are not trivially sparse (up to taking complement) one at a time in the following list and show how they can be reduced to smaller vectors.
    $$(1^4,4;5) \xra{\text{Lemma \ref{l:3}}} (1^4, 3;4) ~{\text{(dense)}},$$
    $$(1^3, 2, 4; 5) \xra{\text{complement}} (1,3,4^3;5) \xra{\text{Lemma \ref{l:3}}} (1, 3^4; 4) ~{\text{(dense)}},$$
    $$(1^3, 3, 4; 5), (1^2, 2^2, 4; 5) \xra{\text{Lemma \ref{l:4}}} (1^2,4^2; 5) ~\text{(sparse)},$$
    $$(1^2, a, 4^2; 5), ~\text{for}~ 1 \leq i \leq 4, ~\text{dominate}~ (1^2,4^2;5) ~\text{(sparse)},$$
    $$(1^2,2,3,4;5) \xra{\text{Lemma \ref{l:3}}} (1^2,2^2,3;4) ~{\text{(trivially sparse)}},$$
    $$(1^2,3^2,4;5) \xra{\text{Lemma \ref{l:10}}} (1^3, 2^2; 5) ~\text{(dense)},$$}
\item{$n=5$, $r=6$: All dimension vectors in this case are trivially sparse unless they have the form $(1^a, 4^b; 5)$ for $a+b=6$. Also, if $2 \leq a,b$, then the resulting dimension vector dominates $(1^2,4^2; 5)$ and hence is sparse. Thus the only dense dimension vectors in this case are $(1^5, 4; 5)$ (by Lemma \ref{l:m}), $(1, 4^5; 5)$, and $(4^6; 5)$ (consider the complement).}
\item{$n=6$, $r \leq 4$: By Theorem \ref{t:sl}, all dimension vectors in this case are dense except for $(1,3,4^2;6)$, $(2^2,4^2;6)$, and $(2,3^2,4;6)$.}
\item{$n=6$, $r=5$: Here is a list of dimension vectors that are not trivially sparse (up to taking complement) and how to reduce them to smaller ones which are considered in previous cases.
    $$(1^4, 4; 6) \xra{\text{Lemma \ref{l:3}}} (1^4, 2; 4) ~{\text{(trivially sparse)}},$$
    $$(1^3, 2, 4; 6) \xra{\text{Lemma \ref{l:3}}} (1^3, 2, 3; 5) ~{\text{(dense by the case $(|\ul{\bf{d}}|, n, r) = (3,5,5)$)}},$$
    $$(1^3, 3, 4; 6) \xra{\text{complement}} (2,3, 5^3; 6) \xra{\text{Lemma \ref{l:3}}} (2,3, 4^3; 5) ~{\text{(dense by taking complement)}},$$
    $$(1^3, 4^2; 6) \xra{\text{complement}} (2^2, 5^3; 6) \xra{\text{Lemma \ref{l:3}}} (2^2, 3^3; 4) ~{\text{(trivially sparse)}},$$
    $$(1^2,2^2,4;6) \xra{\text{Lemma \ref{l:8}}} (1^2,2^2;4) ~{\text{(dense by Theorem \ref{t:sl})}},$$
    $$(1^2, 2, 3, 4; 6) \xra{\text{complement}} (2,3,4,5^2; 6) \xra{\text{Lemma \ref{l:3}}} (2,3^2, 4^2; 5) ~{\text{(trivially sparse)}},$$
    $$(1^2, 2, 3, 4; 6), (1^2, 3^2, 4; 6) ~\text{(sparse by Corollary \ref{c:sl})},$$
    $$(1^2, 3, 4^2; 6) \xra{\text{Lemma \ref{l:3}}} (1^2, 3^3; 5) ~{\text{(trivially sparse)}},$$
    $$(1^2, 4^3; 6) \xra{\text{Lemma \ref{l:10}}} (1^2, 2^3; 6) \xra{\text{Lemma \ref{l:6}}} (1^2,2^2;4) ~\text{(dense by Theorem \ref{t:sl})}.$$}
\item{$n=6$, $r=6$: All dimension vectors in this case are trivially sparse except for $(1^5, 4; 6)$ which can be reduced by Lemma \ref{l:3} to the smaller vector $(1^5, 3; 5)$.}
\item{$n=6$, $r=7$: All dimension vectors in this case are trivially sparse.}
\item{$n=7$, $r \leq 4$: We know by Theorem \ref{t:sl} that all dimension vectors in this case are dense except for $(2, 4, 4, 4; 7)$ and $(3, 3, 4, 4; 7)$.}
\item{$n=7$, $r=5$: Let $\ul{\bf{d}} = (d_1, \dots, d_4, 4; 7)$, with $1 \leq d_i \leq 4$, be a vector in this category. It can be easily seen that either $\sum_{i=1}^3 d_i < 7$ or \ul{\bf{d}} is trivially sparse. If $\sum_{i=1}^3 d_i < 7$, an argument analogous to the proof of Proposition \ref{p:r} implies that the dimension vector $\ul{\bf{d}}$ can be reduced to a smaller one.}
\item{$n=7$, $r=6$: All dimension vectors in this category are trivially sparse except for the following ones:
    $$(1^5, 4; 7) \xra{\text{Lemma \ref{l:3}}} (1^5, 2; 5) ~{\text{(trivially sparse)}},$$
    $$(1^4, 2, 4; 7) \xra{\text{Lemma \ref{l:3}}} (1^4, 2, 3; 6) ~{\text{(trivially sparse)}},$$
    $$(1^4, 3, 4; 7) \xra{\text{Lemma \ref{l:8}}} (1^5; 4) ~{\text{(dense)}},$$
    $$(1^4, 4^2; 7) \xra{\text{complement}} (3^2, 6^4; 7) \xra{\text{Lemma \ref{l:3}}} (3^2, 5^4; 6) ~{\text{(trivially sparse)}}.$$}
\item{$n=7$, $r=7$: All dimension vectors in this case are trivially sparse except for $(1^6, 4; 7)$ which can be reduced by Lemma \ref{l:3} to the smaller dimension vector $(1^6, 3; 6)$.}
\item{$n=7$, $r=8$: All dimension vectors in this category are trivially sparse by Lemma \ref{l:m}.}
\end{itemize}

We can now use Theorem \ref{th:l+1} and Lemma \ref{l:fi} to give a complete list of dense dimension vectors of size $4$, listed by excess dimension:
\begin{itemize}
\item $(1^a,2^b,3^c,4^d;n)$ with $a+2b+3c+4d\leq n+1$.
\item $(1^a,2^b,3^c,4^d;n)$ with $a+2b+3c+4d=n+2$ and $a\leq 3$.
\item $(1^a,2^b,3^c,4^d;n)$ with $a+2b+3c+4d=n+3$, $a+b\leq 4$ and $(a,b)\neq (2,2)$.
\item $(1^a,2^b,3^c,4^d;n)$ with $a+2b+3c+4d=n+4$ and such that $(1^a,2^b,3^c;4)$ is dense. This means that either $a+b+c\leq 3$, or $a+b+c=4$ and $a+2b+3c\neq 8$, or $a+c=5$ and $b=0$ with either $a\leq 1$ or $c\leq 1$.
\item A finite set of dimension vectors $(1^a,2^b,3^c,4^d;n)$ with $a+2b+3c+4d\geq n+5$. For $n\leq 7$ all these vectors are given above, and for $n\geq 8$ these vectors can be found by solving the inequality in Lemma \ref{l:fi} and using Proposition \ref{p:r} to reduce to known cases.
\end{itemize}

We summarize the results of this section. To classify all dense vectors of size $l$, it is necessary to first classify all dense vectors of ambient dimension up to $l+1$. These vectors generate infinite families of dense vectors of size $l$ by Theorem \ref{th:l+1}, having excess dimension at most $l$. There are finitely many dense vectors having excess dimension greater than $l$. For ambient dimensions between $l+1$ and $2l-1$ these need to be found by hand, and the remaining ones can be found by Lemma \ref{l:fi} and Proposition \ref{p:r}.


\section{The balanced case}\label{s:ed}

In this section, we study dimension vectors where $|d_i - d_j| < 3$. We first need a definition.
\begin{defi}
We say that a  dimension vector $\ul{\bf{d}} = (a_1, \dots, a_k; n)$ can be reduced to the  dimension vector $\ul{\bf{d}}'=(a_1', \dots, a_k'; n')$ by Lemma \ref{l:9} if there exists a sequence of vectors $$\ul{\bf{d}} = \ul{\bf{d}}_0  \rightarrow \ul{\bf{d}}_1 \rightarrow \cdots \rightarrow \ul{\bf{d}}_m = \ul{\bf{d}}',$$ where each vector is obtained from the previous one by either taking the complement or applying Lemma \ref{l:9}.
\end{defi}

\begin{thm}\label{t:ed}
Let $\ul{\bf{d}} = ((k-2)^a, (k-1)^b, k^c; n)$ be a dimension vector not satisfying $a+b+c=4$ and $2n = (a+b+c)k -2a-b$. Assume  $\ul{\bf{d}} $ cannot be reduced by Lemma \ref{l:9} to one of the dimension vectors  $(1^2, 2^2, 3^c; 3c+3)$, $(1^3,3^2;4)$, $(1^4,3;5)$, or $(1^3,3^2;5)$.
Then $\ul{\bf{d}}$ is dense if and only if it is not trivially sparse.
\end{thm}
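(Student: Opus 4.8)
The plan is to establish both directions of the equivalence, the only content being the "if" direction: every balanced vector $\ul{\bf{d}} = ((k-2)^a,(k-1)^b,k^c;n)$ that is not trivially sparse, is not of the excluded length-four type, and is not reducible by Lemma \ref{l:9} to one of the four named vectors, is dense. The "only if" direction is immediate, since a trivially sparse vector violates \eqref{e:tdc} and so cannot be dense by Lemma \ref{l:ml}. I would prove the "if" direction by strong induction on the ambient dimension $n$, using Lemma \ref{l:9} together with the complement operation of Lemma \ref{l:c} as the main engine. The decisive feature of Lemma \ref{l:9} is that it leaves the multiset of subspace dimensions unchanged — hence it preserves the band $\{k-2,k-1,k\}$, the multiplicities $(a,b,c)$, and the length $a+b+c$ — while strictly decreasing $n$. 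Thus the entire argument stays inside the one-parameter family of balanced vectors with fixed $(a,b,c,k)$ and varying $n$, and reduces to locating the value of $n$ at which density fails.

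First I would clear the base cases. If the total dimension satisfies $\sum_i d_i \le n+1$, then $\ul{\bf{d}}$ is dense by Lemma \ref{l:4.5}, which disposes of all vectors of excess dimension at most one. If the length $a+b+c\le 4$, then Theorem \ref{t:sl} applies directly: length at most three is always dense, and the only sparse length-four vectors are those of total dimension $2n$; since $(a+b+c)k-2a-b$ is exactly the total dimension of a balanced vector, these are precisely the vectors removed by the first exclusion. Because the reduction system preserves length, I may therefore assume throughout that $a+b+c\ge 5$ and that the excess is at least two.

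For the inductive step I would arrange the entries in increasing order so that Lemma \ref{l:nl} applies. When the size $|\ul{\bf{d}}|=k$ is at most $n/2$ and $\ul{\bf{d}}$ is not trivially sparse, Lemma \ref{l:nl} forces the sum of all but the two largest entries to be strictly less than $n$; this is exactly the input needed to split off two subspaces $b_1,b_2$ and invoke Lemma \ref{l:8} or Lemma \ref{l:9}, according as the two largest sum to $n$ or to less, reducing to a balanced vector of the same band in ambient dimension $m<n$. The complement $\ul{\bf{d}}^c$ has band $\{n-k,n-k+1,n-k+2\}$ and size $n-k+2$, so one of $\ul{\bf{d}},\ul{\bf{d}}^c$ has size at most $n/2$ precisely when $n\notin\{2k-3,2k-2,2k-1\}$. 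Outside this narrow window the reduction always lowers $n$; by the converses recorded in the Remark following Lemma \ref{l:10} each such step is an equivalence, and since reducibility to the four excluded vectors is transitive, the reduced vector again satisfies all hypotheses of the theorem, so the inductive hypothesis applies to it. The middle window $n\in\{2k-3,2k-2,2k-1\}$, where the configuration is essentially half-dimensional, I would treat by hand, either by a direct stabilizer computation via Lemma \ref{l:ml} (in the spirit of Lemma \ref{l:m} and the length-three argument in Theorem \ref{t:sl}) or by one of the specialized reductions \ref{l:6}, \ref{l:7}, \ref{l:10}; this is also where the genuine sparse exceptions live, all of which lie in the band $k=3$ and coincide with the sparse size-three and length-four vectors already isolated in Section \ref{sec-size}.

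The main obstacle is the bookkeeping that makes the induction close. I must show that whenever $\ul{\bf{d}}$ is not trivially sparse and not an excluded vector, some legal reduction carries it to a vector that is again not trivially sparse; equivalently, that the only way a not-trivially-sparse balanced vector can reduce to a trivially sparse one is by being reducible to one of the four named vectors. Concretely this entails verifying the numerical hypotheses of Lemmas \ref{l:8} and \ref{l:9} for a suitable partition — the conditions $k'\le b_1,b_2$, $b_1+b_2\le n$, and $a_i\le m$, none of which is automatic, since it genuinely matters which equal-dimension subspaces are designated as $b_1,b_2$ — and then tracking the defect $(n^2-1)-\sum_i d_i(n-d_i)$ across the reduction to confirm that non-trivial-sparsity is preserved away from the exceptions. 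The four excluded vectors are exactly the sparse configurations that no chain of complements and applications of Lemma \ref{l:9} can expose as trivially sparse: the cross-ratio obstruction of Example \ref{ex-1}, generalized to the family $(1^2,2^2,3^c;3c+3)$, and the vectors $(1^3,3^2;4)$, $(1^4,3;5)$, $(1^3,3^2;5)$, whose sparsity is visible only through Lemma \ref{l:3} (for instance $(1^4,3;5)$ reduces by Lemma \ref{l:3} to the trivially sparse $(1^4,2;4)$). Removing them by hypothesis is precisely what allows the reduction machinery to terminate only at dense base cases.
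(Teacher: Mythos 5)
Your overall scaffolding (induction on $n$, complements, base cases from Lemma \ref{l:4.5} and Theorem \ref{t:sl}, special treatment of the half-dimensional window) matches the paper, but the inductive engine you propose does not work, and what you defer as ``bookkeeping'' is in fact the core of the argument. You claim that Lemma \ref{l:nl} gives ``exactly the input needed'' to invoke Lemma \ref{l:8} or \ref{l:9}, and that the whole induction therefore stays inside the one-parameter family with $(a,b,c,k)$ fixed. Neither claim holds. Lemma \ref{l:9} needs the non-designated entries to sum to exactly $n-k'$ with $k'\le b_1,b_2$; for balanced vectors of small positive excess this fails for every choice of $b_1,b_2$. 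For example, for $(3^4;10)$ (not trivially sparse, excess $2$) the entries other than two largest sum to $6=n-4$, so $k'=4>3$, and Lemma \ref{l:9} is inapplicable; the same happens whenever $a(k-2)+b(k-1)+(c-1)k\le n$, i.e.\ whenever the excess is at most $k$ minus the smallest entry. These are precisely the cases the paper handles with entirely different tools: repeated applications of Lemma \ref{l:3}, which reduce to an \emph{equidimensional} vector $((k-r)^{a+b+c};\,\cdot\,)$ in smaller ambient dimension --- thereby leaving your fixed-$(a,b,c,k)$ family, which is why the paper's proof is organized as a three-stage induction (first $a=b=0$, then $a=0$, then $ac\neq 0$), each stage feeding the next; and, in the boundary case $a(k-2)+b(k-1)+(c-1)k=n$, applications of Lemma \ref{l:6} followed by Lemma \ref{l:7}, reducing to $((k-2)^a,(k-1)^b;k)$. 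Lemma \ref{l:9} is used only in the residual case where even after removing two entries the rest still sum to more than $n$.

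This structural error also misplaces the exceptions. You assert that the genuine sparse exceptions live in the middle window $n\in\{2k-3,2k-2,2k-1\}$, but the infinite family $(1^2,2^2,3^c;3c+3)$ has $k=3\le n/2$ for every $c\ge 1$ and never lies in that window; it arises exactly in the Lemma \ref{l:6}/\ref{l:7} boundary case just described, where the reduction terminates at the sparse vector $(1^2,2^2;3)$. So a proof that confines all exceptional behaviour to a hand-checked half-dimensional window, and runs everything else through Lemma \ref{l:9}, cannot be repaired by more careful case-tracking: the lemmas \ref{l:3}, \ref{l:6}, \ref{l:7} (and the change of band they entail) are indispensable, and identifying the exceptional family requires following those reductions, not Lemma \ref{l:9}. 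Your ``only if'' direction and the length-$\le 4$ and excess-$\le 1$ base cases are fine, as is the observation that the window $n\in\{2k-3,2k-2,2k-1\}$ is where neither $\ul{\bf{d}}$ nor $\ul{\bf{d}}^c$ has size at most $n/2$.
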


\begin{rem}
The exceptions listed in the theorem are sparse by Theorem \ref{t:sl} and  the case $| \ul{\bf{d}}|=3$ in \S \ref{sec-size}. When $a=b=0$, the theorem says that equidimensional vectors are dense if and only if they are not trivially sparse and we recover Popov's  \cite[Theorem 3]{p} in the case of $Gr(k,n)$. When $a=0$, the theorem says that dimension vectors with $|d_i - d_j| \leq 1$ are dense if and only if they are not trivially sparse and are not of the form $((k-1)^2, k^2; 2k-1)$.
\end{rem}

\begin{proof}
First, assume $a=b=0$ and $c \not= 0$. After taking complements if necessary, we may assume that $k \leq \frac{n}{2}$. Write $n = mk + r$ with $0 \leq r < k$. 

$\bullet$ If $c> m+2$ or if $c=m+2$ and $r=0$, then  $\ul{\bf{d}}$ is trivially sparse by Lemma \ref{l:nl}.  

$\bullet$ If $c = m+1$ and $r=0$,  then $m-2$ applications of Lemma \ref{l:6}, reduces the density to the density of $(k^3, 2k)$.  The latter is dense by Theorem \ref{t:sl}. 

$\bullet$ If $c=m+1$ and $r>0$, $m+1$ applications of Lemma \ref{l:3}  reduces the density to that of $((k-r)^{m+1}, m(k-r))$. The latter is dense by the previous case. Hence, if $c \leq m+1$, the vector is dense by Lemma \ref{2.7}.

$\bullet$ Finally, if $c= m+2$ and $r>0$, applying Lemma \ref{l:9} and taking complements reduces the density to the density of $((k-r)^{m+2}; 2k-r)$. An easy calculation shows that  this vector is trivially sparse if and only if the original vector is. Since $2k-r < n$, we reduce the problem to one with strictly smaller ambient dimension. By induction on $n$, we conclude that $(k^c; n)$ is dense if and only if it is not trivially sparse.

Next, suppose $a=0$, but $bc \not= 0$. If $n \not= 2k-1$,  after possibly taking complements, we may assume that $k \leq \frac{n}{2}$. If a dimension vector $\left(\left(\frac{n-1}{2}\right)^b, \left(\frac{n+1}{2}\right)^c; n\right)$ is not trivially sparse, then $$(b+c)\left(\frac{n^2}{4} - \frac{1}{4} \right) \leq n^2 -1, \ \ \  \mbox{hence} \ \ b+c \leq 4.$$  By Theorem \ref{t:sl}, the only such sparse dimension vectors  are $((k-1)^2, k^2; 2k-1)$. 

Now we may assume that $k \leq \frac{n}{2}$. 

$\bullet$ If $b(k-1) + ck \leq n+1$, then $\ul{\bf{d}}$ is dense  by Lemma \ref{l:4.5}. 

$\bullet$  If  $b(k-1) + (c-1)k + r = n$ for some $0 < r < k$, $c$ applications of Lemma \ref{l:3}, reduces the density of $\ul{\bf{d}}$ to the density of $((k-r)^c, (k-1)^b; n-cr)$. If $r=1$, we reduce to the equidimensional case. Otherwise, another $b$ applications of Lemma \ref{l:3} reduces the density of $\ul{\bf{d}}$ to the density of $\ul{\bf{d}}'=((k-r)^{b+c}; n-cr-b(r-1))$. Since $n-cr -b(r-1) = (b+c-1)(k-r)$, the vector is not trivially sparse.  By the equidimensional case, we conclude that $\ul{\bf{d}}$ is dense. 

$\bullet$  If $b(k-1) + (c-1)k =n$, an easy calculation shows that if $\ul{\bf{d}}$ is not trivially sparse, then $b \leq k+1$. By $(c-1)$ applications of Lemma \ref{l:6}, we may assume that $c=1$. By Lemma \ref{l:7}, we reduce the density of $\ul{\bf{d}}$ to the density of $((k-1)^b;k)$. The latter is dense since $b \leq k+1$.

$\bullet$ Finally, by Lemma \ref{l:nl}, we may  assume that either
\begin{enumerate}
\item $c \geq 2$ and $b(k-1) + (c-2)k + r = n$ for some $0 < r < k$; or
\item $c=1$ and $(b-1)(k-1) + r = n$ for some $0 < r < k-1$.
\end{enumerate}
By Lemma \ref{l:9} and taking the complement, the density of $\ul{\bf{d}}$ is equivalent to the density of  $((k-r)^c, (k-r+1)^b; 2k-r)$ in  Case (1) and $(k-r-1,(k-r)^b; 2k-r-1)$ in Case (2). Since $k \leq \frac{n}{2}$, both cases have strictly smaller ambient dimension. Moreover, these vectors are trivially sparse if and only if $\ul{\bf{d}}$ is trivially sparse. Furthermore, if $2(k-r) + 1 = 2k-r$ and $b=c=2$, we deduce that $r=1$ and $n= 2k-1$. Hence, the reduction produces a dimension vector where our induction hypotheses apply. By induction, we conclude that a dimension vector of the form $((k-1)^b, k^c; n)$ which is not trivially sparse and of the form $((k-1)^2, k^2; 2k-1)$ is dense.

Now we are ready to analyze the case $ac \not= 0$. Unless $\frac{n}{2}<k \leq \lceil \frac{n}{2} \rceil +1$,  after possibly taking the complement, we may assume $k \leq \frac{n}{2}$. We first need to analyze the cases where $\frac{n}{2}<k \leq \lceil \frac{n}{2} \rceil +1$. If $n$ is even and $k = \frac{n}{2} +1$,  we have that $$(a+c)\left(\frac{n^2}{4}-1\right) + b \frac{n^2}{4} \leq n^2 -1.$$ Hence, either $a+b+c\leq4$ or $n\leq 4$. By Theorem \ref{t:sl} and \S \ref{sec-size}, the only sparse dimension vectors which are not trivially sparse have the form $((k-1)^2, (k+1)^2; 2k)$, $(k-1, k^2, k+1; 2k)$,  $(1^3, 3^2; 4)$ or $(1^2, 3^3;4)$.  If $n$ is odd and $k = \frac{n+1}{2}$ or $\frac{n+3}{2}$, by passing to the complement if necessary, we may take $k = \frac{n+1}{2}$. Using the dimension estimate $$a\left(\frac{n^2}{4}-\frac{9}{4}\right) + (b+c) \left(\frac{n^2}{4}- \frac{1}{4}\right) \leq n^2 -1,$$ we conclude that either $a+b+c \leq 4$ or $n\leq 5$. By Theorem \ref{t:sl} and \S \ref{sec-size}, the only sparse dimension vectors which are not trivially sparse have the form $(k-2, k^3; 2k-1), ((k-2)^3, k; 2k-3)$,  $(1^4,3; 5)$, $(1^3, 3^2; 5)$, $(2, 4^4; 5)$ or $(2^2, 4^3; 5)$.

From now on, we may assume that $k \leq \frac{n}{2}$. By Theorem \ref{t:sl}, we may also assume that $a+b+c >4$. 

$\bullet$ (i)  If $a(k-2) + b(k-1) + ck \leq n+1$, then the vector is dense by Lemma \ref{l:4.5}. 

$\bullet$ (ii)  If $a (k-2) + b (k-1) + (c-1) k = n-r$ with $0 < r < k$, then by $c$ applications of Lemma \ref{l:3}, the density of $\ul{\bf{d}}$ reduces to the density of $((k-r)^c, (k-2)^a, (k-1)^b; n-cr)$.  If $r=1, 2$, then we are reduced to the equidimensional or the two-step case. Otherwise, $a+b$ additional applications of Lemma \ref{l:3}, reduces the density to the equidimensional case $((k-r)^{a+b+c}; n-cr-b(r-1)-a(r-2))$. In either case, a straightforward calculation shows that the vector is not trivially sparse. Therefore, the vector is dense by the cases $a=b=0$ or $a=0$ of the theorem. 

$\bullet$ (iii)  If  $a (k-2) + b (k-1) + (c-1) k = n$, by $(c-1)$ applications of Lemma \ref{l:6}, we may assume that $c=1$. By Lemma \ref{l:7}, we reduce the density of $\ul{\bf{d}}$ to the density of $((k-2)^a, (k-1)^b; k)$. The latter vector is trivially sparse if and only if $\ul{\bf{d}}$ is trivially sparse.  After taking the complement, the only vector of this form which is not dense is $(1^2, 2^2; 3)$ by \S \ref{sec-size}. We thus obtain a family of sparse vectors of the form $(1^2, 2^2, 3^c; 3c+3)$ and the complement $((3c)^c, (3c+1)^2, (3c+2)^2; 3c+3)$.

$\bullet$ (iv) Finally, by Lemma \ref{l:nl}, we have one of the following three cases:
\begin{enumerate}
\item $c\geq 2$, $a(k-2)+b(k-1) +(c-2)k = n+r$ for some $0<r<k$.
\item $c=1$, $b\geq 1$, $a(k-2)+(b-1)(k-1) = n+r$ for some $0 < r<k-1$.
\item $c=1$, $b=0$, $(a-1)(k-2) = n+r$ for some $0<r<k-2$.
\end{enumerate}
By Lemma \ref{l:9} and taking the complement, the density of $\ul{\bf{d}}$ is equivalent to the density of $\ul{\bf{d}}' = ((k'-2)^c, (k'-1)^b, (k')^a; n')$, where 
\begin{enumerate}
\item $\ul{\bf{d}}'=((k-r)^c, (k-r+1)^b, (k-r+2)^a; 2k-r),$
\item $\ul{\bf{d}}'=(k-r-1, (k-r)^b, (k-r+1)^a; 2k-r-1),$
\item  $\ul{\bf{d}}'= (k-2-r, (k-r)^a; 2k-r-2),$ respectively.
\end{enumerate}
An easy calculation shows that these vectors are trivially sparse if and only if the original vectors are trivially sparse. Recall we are assuming that $a+b+c> 4$. If $2k' > n'$, by the analysis above, $\ul{\bf{d}}' $ is one of $(1^3, 3^2; 4)$,  $(1^4, 3; 5)$, $(1^3, 3^2; 5)$ up to taking complements. Otherwise, we may assume that $2k' \leq n'$ and apply our inductive hypotheses. Note that $n' < n$. If  $\ul{\bf{d}}' $ is in case (i), (ii) or (iii), then it is dense except if it is of the form $(1^2, 2^2, 3^c; 3c+3)$. If $\ul{\bf{d}}' $ is in case (iv), we can apply Lemma \ref{l:9} to reduce the ambient dimension except when  $\ul{\bf{d}}' $ is one of the vectors $(1^3, 3^2; 4)$,  $(1^4, 3; 5)$, $(1^3, 3^2; 5)$ or their complements. Hence, any vector which is not trivially sparse and fails to be dense can be reduced to one of $(1^2, 2^2, 3^c; 3c+3)$, $(1^3, 3^2; 4)$,  $(1^4, 3; 5)$, $(1^3, 3^2; 5)$, by repeatedly applying Lemma \ref{l:9} and taking complements.   This concludes the proof of the theorem.
\end{proof}

\begin{cor}
Let $a_1 \leq \cdots \leq a_k$ be positive integers. Let $r$ be a non-negative integer such that $a_k -a_1 -1 \leq r$.  Let $n = \sum_{i=1}^{k-1} a_i + r$. Then the dimension vector  $\ul{\bf{d}}= (a_1, \dots, a_k; n)$ is dense. 
\end{cor}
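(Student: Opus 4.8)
The plan is to strip off the single largest space and reduce to a configuration that exactly fills the ambient space, where a stabilizer can be written down explicitly. Set $e := \sum_{i=1}^{k} a_i - n = a_k - r$ for the excess dimension; the hypothesis $a_k-a_1-1\le r$ says precisely that $e\le a_1+1$. If $e\le 1$ then $\sum_{i=1}^k a_i\le n+1$ and $\ul{\bf{d}}$ is dense by Lemma \ref{l:4.5}, so I may assume $e\ge 2$, whence $a_1\ge e-1\ge 1$. Since $\sum_{i=1}^{k-1}a_i=n-r<n$ and $n-a_k=n-e-r\le n-r$, Lemma \ref{l:3} applies with the first $k-1$ spaces as the spanning block and $a_k$ as the single remaining space (its parameter equal to $r$), reducing the density of $\ul{\bf{d}}$ to that of
$$\ul{\bf{d}}'=(a_1,\dots,a_{k-1},e;\,n'),\qquad n'=\sum_{i=1}^{k-1}a_i ,$$
a vector of the same excess $e$ whose first $k-1$ spaces now span the ambient space. (When $r=0$ this step is vacuous, and then $a_1\le a_i\le a_k\le a_1+1$ forces every entry of $\ul{\bf{d}}$ into $\{a_1,a_1+1\}$, so $\ul{\bf{d}}$ is balanced and Theorem \ref{t:ed} already applies.)

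Next I would prove $\ul{\bf{d}}'$ dense directly from Lemma \ref{l:ml}. Take $X=V_1\oplus\cdots\oplus V_{k-1}$ with $\dim V_i=a_i$, so that $\mathrm{Stab}(V_1,\dots,V_{k-1})$ is the Levi subgroup $L=\prod_{i=1}^{k-1}GL(V_i)$ modulo scalars, and let $U\subset X$ be a generic $e$-plane. Assuming $e\le a_1$, each coordinate projection $\pi_i\colon U\to V_i$ is injective for generic $U$, and an element $(M_1,\dots,M_{k-1})\in L$ preserves $U$ exactly when there is a single $g\in GL(U)$ with $M_i\,\pi_i=\pi_i\,g$ for all $i$, each $M_i$ being otherwise free on a complement of $\pi_i(U)$ in $V_i$. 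Counting the $g$ and the free blocks gives
$$\dim\mathrm{Stab}(V_1,\dots,V_{k-1},U)=e^2+\sum_{i=1}^{k-1}\bigl(a_i^2-e\,a_i\bigr)-1 ,$$
and since $\sum_{i=1}^{k-1}a_i=n'$ this equals $(n'^2-1)-\sum_{i=1}^{k-1}a_i(n'-a_i)-e(n'-e)$, the expected dimension. Hence $\ul{\bf{d}}'$, and therefore $\ul{\bf{d}}$, is dense.

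The main obstacle is the boundary case $e=a_1+1$, where $\pi_i$ drops rank on every minimal block (those with $a_i=a_1=e-1$) and the clean count fails. I would isolate it as follows: if \emph{every} block is minimal the reduced vector is again balanced and is covered by Theorem \ref{t:ed}; otherwise I would show the differential of the orbit map $L\to Gr(e,n')$ is still surjective at a generic $U$ provided the number of minimal blocks is at most $e$, the kernel lines $\ker\pi_i$ then being in general position in $U$, and that once this bound is exceeded the configuration is trivially sparse and hence outside the dense range. This interplay with trivial sparsity at $e=a_1+1$ is the delicate point, and it is exactly where the boundary hypothesis $r=a_k-a_1-1$ must be watched; everywhere else the reduction of the first two paragraphs, possibly iterated as an induction on $n$ (for $r>0$ the vector $\ul{\bf{d}}'$ has strictly smaller ambient dimension and again meets the hypotheses, with $a_{k-1}-e$ playing the role of the new $r$), settles the claim.
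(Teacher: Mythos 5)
Your proof is genuinely incomplete, and the gap sits exactly where you flagged it: the boundary case $e=a_1+1$ (equivalently $r=a_k-a_1-1$, including $r=0$ with $a_k=a_1+1$). For $e\le a_1$ your argument is correct and complete: the single application of Lemma \ref{l:3} followed by the explicit stabilizer count for $(a_1,\dots,a_{k-1},e;n')$ is valid (the count $e^2+\sum_i a_i(a_i-e)-1$ does equal the expected dimension), and this is a genuinely different route from the paper, which instead iterates Lemma \ref{l:3} until the vector is equidimensional or two-valued and then quotes Theorem \ref{t:ed}. But in the boundary case your argument is only a sketch (``I would show\dots''), and it cannot be completed, because the statement itself fails there. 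The error in your sub-case (a) (all blocks minimal) and in your $r=0$ remark is the same: Theorem \ref{t:ed} does not assert that balanced vectors are dense; it asserts they are dense \emph{if and only if} they are not trivially sparse, and the Corollary's hypotheses do not exclude trivial sparseness. Concretely, $(1^4,3;5)$ satisfies the hypotheses ($a_k-a_1-1=1=r$, $n=4+1$), your reduction sends it to $(1^4,2;4)$, which is trivially sparse ($16>15$), so Theorem \ref{t:ed} yields sparseness --- indeed $(1^4,3;5)$ is one of the sparse exceptions listed in Theorem \ref{t:ed} itself. Even more simply, $(1^4,2^2;6)$ satisfies the hypotheses with $r=0$ and is itself trivially sparse ($36>35$). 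The failure is not confined to trivially sparse inputs either: $(2^5,4;11)$ satisfies the hypotheses and is not trivially sparse ($118\le 120$), yet Lemma \ref{l:3} reduces it to $(2^5,3;10)$, which is trivially sparse ($101>99$), so $(2^5,4;11)$ is sparse by the Remark following Lemma \ref{l:10}.

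Your sub-case (c) is not a proof step but a concession: a configuration allowed by the hypotheses that turns out to be trivially sparse is not ``outside the dense range''; it is a counterexample to the Corollary (and, in fact, with enough minimal blocks and one large block the reduced vector need not even be trivially sparse, merely sparse, so the dichotomy in (b)/(c) is wrong as stated). For what it is worth, the paper's own proof contains the identical flaw: in the case $r=a_k-a_1-1>0$ it reduces to $(a_1^l,(a_1+1)^{k-l};(k-1)a_1+k-l-1)$ and declares it ``dense by Theorem \ref{t:ed}'' without checking trivial sparseness; that reduced vector is trivially sparse exactly when the number $l$ of entries equal to $a_1$ exceeds $a_1+2$, which is how the counterexamples above arise. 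So your argument does prove the Corollary for $r\ge a_k-a_1$, more self-containedly than the paper; in the remaining case $r=a_k-a_1-1$ the statement needs an extra hypothesis (for instance, at most $a_1+2$ of the $a_i$ equal to $a_1$), and neither your sketch nor the paper's own proof is valid as written.
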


\begin{proof}
If $r\geq a_k$, then $\ul{\bf{d}}$ is dense by Lemma \ref{l:2}.  We may assume $r< a_k$. If $a_k = a_1$ or $a_k = a_1 +1$, then the vector is dense by Theorem \ref{t:ed}. Otherwise, $r>0$. If, in addition, $r \geq a_k -a_1$,  $k$ applications of Lemma \ref{l:3} reduces the density of $\ul{\bf{d}}$ to the density of  the vector $((a_k-r)^k; (k-1)(a_k-r))$. Let $l$ be the number of integers in the sequence equal to $a_1$. If  $r = a_k -a_1-1>0$, then $k-l$ applications of Lemma \ref{l:3} reduces the density of $\ul{\bf{d}}$ to the density of  $(a_1^l, (a_1+1)^{k-l}; (k-1)a_1 + k-l-1)$. Both are dense by  Theorem \ref{t:ed}.
\end{proof}


\section{Further examples and questions}\label{s:fq}

In this section, we use our  techniques to produce examples of dense dimension vectors of large length and size. Lemma \ref{l:6} and Lemma \ref{l:7} proved in Section \ref{s:rt} have the special feature of reducing the length and the ambient dimension at the same time. This allows us to employ these lemmas in reverse  and produce sequences of dense dimension vectors with large length and size out of a given dense dimension vector.

Recall that the Fibonacci sequence $\{ F_i \}_{i \geq 0}$ is the sequence of nonnegative integers recursively defined by acquiring $F_0 = 0$, $F_1 = 1$, and $F_{i+2} = F_{i+1} + F_i$ for all $0 \leq i$.

\begin{prop}
Let $(a_1, \dots, a_r; b)$ be a dense dimension vector such that for any $1 \leq t \leq r$, $b + a_t \leq n := \sum_{i=1}^r a_i$. Then for every $k \geq 0$, the dimension vector
$$\ul{\bf{d}}_k := (a_1, \dots, a_r, b, F_1.n+F_0.b, F_2.n+F_1.b, \dots, F_k.n + F_{k-1}.b; F_{k+1}.n + F_k.b),$$
where $\{ F_i \}_{0 \leq i}$ is the Fibonacci sequence, is dense.
\end{prop}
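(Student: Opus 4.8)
The plan is to argue by induction on $k$, building $\ul{\bf{d}}_k$ up from $\ul{\bf{d}}_{k-1}$ by a single application of Lemma \ref{l:7} in its ``build-up'' direction (small vector dense $\Rightarrow$ large vector dense). The key bookkeeping observation is that the ambient dimensions satisfy a Fibonacci recursion. Writing $N_j := F_{j+1}\,n + F_j\,b$, so that $N_{-1} = b$, $N_0 = n$, and $N_j = N_{j-1} + N_{j-2}$, the $j$-th appended component $F_j n + F_{j-1} b$ is exactly $N_{j-1}$, whence
$$\ul{\bf{d}}_k = (a_1, \dots, a_r, N_{-1}, N_0, N_1, \dots, N_{k-1}; N_k).$$
In this form $\ul{\bf{d}}_k$ is obtained from $\ul{\bf{d}}_{k-1}$ by appending the previous ambient dimension $N_{k-1}$ as a new component and enlarging the ambient dimension to $N_k$, which is precisely the shape to which Lemma \ref{l:7} applies.

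For the base case $k=0$ we have $\ul{\bf{d}}_0 = (a_1, \dots, a_r, b; n)$, and this is dense by Lemma \ref{l:7} applied to the hypothesis that $(a_1, \dots, a_r; b)$ is dense: indeed $\sum_{i=1}^r a_i = n$, the size of $\ul{\bf{d}}_0$ equals $b$ because every $a_i \leq b$ (as $(a_1, \dots, a_r; b)$ is a genuine configuration in a $b$-dimensional space), and the standing hypothesis $b + a_t \leq n$ for all $t$ supplies the last numerical condition $a_i + b \leq n$ of Lemma \ref{l:7}. It is worth noting that this is the \emph{only} place the hypothesis $b+a_t\le n$ is needed; at later stages the analogous inequalities follow automatically from the Fibonacci structure together with $a_i\le b$.

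For the inductive step I would apply Lemma \ref{l:7} with the distinguished component (the role of ``$b$'') taken to be $N_{k-1}$ and the remaining components being those of $\ul{\bf{d}}_{k-1}$. The three conditions to check are: (i) the remaining components sum to the new ambient dimension $N_k$; (ii) $N_{k-1}$ is the size of $\ul{\bf{d}}_k$; and (iii) $e + N_{k-1} \leq N_k$ for every other component $e$. For (i) I would invoke the telescoping identity $\sum_{j=0}^{m} N_j = N_{m+2} - N_1$ for Fibonacci-type sequences: the components of $\ul{\bf{d}}_{k-1}$ sum to $n + b + \sum_{j=0}^{k-2} N_j = n + b + (N_k - N_1) = N_k$, using $N_1 = n+b$. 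Condition (ii) holds since $N_{-1} < N_0 < N_1 < \cdots$ is increasing and each $a_i \leq b = N_{-1}$, so $N_{k-1}$ is the largest entry. For (iii), the largest competing component is $N_{k-2}$, and $N_{k-2} + N_{k-1} = N_k$ exactly, while every $a_i \leq b \leq N_{k-2}$; thus all inequalities hold, with equality only in the extremal case. The main work — and the only genuine obstacle — is exactly this numerical verification, the crucial point being the telescoping identity that forces the component sum of $\ul{\bf{d}}_{k-1}$ to equal the next ambient dimension $N_k$. Once it is in place, Lemma \ref{l:7} promotes density of $\ul{\bf{d}}_{k-1}$ to density of $\ul{\bf{d}}_k$, completing the induction.
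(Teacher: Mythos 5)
Your proof is correct and follows essentially the same route as the paper: induction on $k$, promoting $\ul{\bf{d}}_{k-1}$ to $\ul{\bf{d}}_k$ via Lemma \ref{l:7}, with the key numerical input being a Fibonacci summation identity (your telescoping $\sum_{j=0}^{m} N_j = N_{m+2} - N_1$ is equivalent to the paper's $F_{t+2} = 1 + \sum_{i=0}^{t} F_i$). Your write-up is in fact slightly more careful than the paper's, since you explicitly verify the size condition and the inequalities $e + N_{k-1} \leq N_k$ required by Lemma \ref{l:7}, which the paper leaves implicit.
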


\begin{proof}
The proof is by induction on $k$. For the base case of the induction, we need to show that the dimension vector $\ul{\bf{d}}_0 = (a_1, \dots, a_r, b; n)$ is dense, which follows from our assumptions and Lemma \ref{l:7}. Assume that $\ul{\bf{d}}_k$ is dense. Lemma \ref{l:7} implies that the dimension vector
$$(a_1, \dots, a_r, b, F_1.n+F_0.b, \dots, F_{k+1}.n + F_k.b; (1 + \sum_{i=0}^k F_i).n + (1 + \sum_{i=0}^{k-1} F_i).b)$$
is dense. Hence, it suffices to show that
\begin{equation} \label{e:f}
F_{t+2} = 1 + \sum_{i=0}^t F_i
\end{equation}
for any $t \geq 0$. We verify the identity \eqref{e:f} by induction on $t$. The case $t=0$ is evident as $F_2 = 1 = 1 + F_0$. Assuming \eqref{e:f}, we can compute
$$F_{t+3} = F_{t+2} + F_{t+1} = 1 + \sum_{i=0}^t F_i + F_{t+1} = 1 + \sum_{i=0}^{t+1} F_i.$$ This concludes the induction and the proof of the proposition.
\end{proof}

\begin{ex}
If we apply the above proposition to the dense dimension vector $(1,1,1; 2)$, with $n=3$ and $b=2$, we get the sequence
$$\ul{\bf{d}}_k = (1,1,1,2, 3F_1 + 2F_0, 3F_2 + 2F_1, \dots, 3F_k + 2F_{k-1}; 3F_{k+1} + 2F_k)$$
of dense dimension vectors. On the other hand, one can easily check by induction on $k$ that $3F_k + 2F_{k-1} = F_{k+3}$ (in fact, more generally, $F_t F_k + F_{t-1} F_{k-1} = F_{t+k-1}$ for all natural numbers $t$ and $k$). Therefore, we obtain that for any $k \geq 0$ the dimension vector
$$\ul{\bf{d}}_k = (1, F_1, F_2, \dots, F_{k+3}; F_{k+4})$$
is dense. Similarly, by applying the above proposition to the dense dimension vectors that we found in previous sections, we can construct infinitely many sequences of ``Fibonacci type" that are dense.
\end{ex}

\begin{prop}
Let $(a_1, \dots, a_r, b; n)$ be a dense dimension vector with $n = \sum_{i=1}^r a_i$. Then, for every $k \geq 1$, the dimension vector
$$\ul{\bf{d}}_k = (a_1, \dots, a_r, b^k; n+ (k-1)b)$$
is dense.
\end{prop}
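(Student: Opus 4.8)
The plan is to prove the statement by induction on $k$, using Lemma \ref{l:6} as the single engine; the point is simply to read that lemma "from small to large," since it asserts that appending a repeated pair $b,b$ and raising the ambient dimension by $b$ preserves density. The base case $k=1$ is immediate, because $\ul{\bf{d}}_1 = (a_1, \dots, a_r, b; n)$ is exactly the dimension vector assumed to be dense in the hypothesis.

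For the inductive step, I would assume that $\ul{\bf{d}}_k = (a_1, \dots, a_r, b^k; n+(k-1)b)$ is dense and deduce the density of $\ul{\bf{d}}_{k+1} = (a_1, \dots, a_r, b^{k+1}; n+kb)$. To this end I would apply Lemma \ref{l:6} with ambient dimension $N = n+kb$, grouping the entries $a_1, \dots, a_r, b^{k-1}$ as the $a_i$-block of the lemma and isolating the last two copies of $b$ as the distinguished repeated pair. The reduced vector produced by Lemma \ref{l:6} is then $(a_1, \dots, a_r, b^{k-1}, b; N-b) = (a_1, \dots, a_r, b^k; n+(k-1)b) = \ul{\bf{d}}_k$, which is dense by the inductive hypothesis. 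Hence $\ul{\bf{d}}_{k+1}$ is dense, closing the induction.

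The only thing requiring attention is the verification of the two numerical hypotheses of Lemma \ref{l:6}. The sum condition demands $b + \big(\sum_{i=1}^r a_i + (k-1)b\big) = N$, which holds because $\sum_{i=1}^r a_i = n$ by assumption, giving $b + n + (k-1)b = n+kb = N$. The size condition $b \leq N/2$ is equivalent to $(2-k)b \leq n$, which is automatic for $k \geq 2$ since the left-hand side is nonpositive, and for $k=1$ reduces to $b \leq n$, true because $b$ is the dimension of a subspace of the $n$-dimensional ambient space. I do not expect a genuine obstacle here: the entire content lies in recognizing that Lemma \ref{l:6} can be run in reverse, and the mild care needed is simply confirming the size inequality $b \leq N/2$ in the first step, where $N$ is smallest; it only becomes easier as $k$ grows.
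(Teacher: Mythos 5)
Your proof is correct and is essentially the paper's own argument: induction on $k$ with the base case given by the hypothesis, and the inductive step $\ul{\bf{d}}_k \Rightarrow \ul{\bf{d}}_{k+1}$ supplied by Lemma \ref{l:6}. The only difference is that you explicitly verify the numerical hypotheses of Lemma \ref{l:6} (the sum condition and $b \leq N/2$), which the paper leaves implicit; this is a welcome addition, not a deviation.
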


\begin{proof}
We prove the assertion by induction on $k$. The induction basis $k = 1$ is our hypothesis. If we assume that the dimension vector $\ul{\bf{d}}_k$ is dense, Lemma \ref{l:6} implies that the dimension vector $\ul{\bf{d}}_{k+1}$ is dense, too.
\end{proof}

The reduction lemmas can be applied to study the density of arbitrary dimension vectors with size bounded by $n/2$. Unfortunately, after the reduction the dimension of some of the vector spaces might be more than half the ambient dimension. The same techniques can be applied even when some of the dimensions are greater than $n/2$. However,  after the reduction, the new problem is no longer the density of the $\pgl{n}$ action on a product of Grassmannians, but the density of the action on a subvariety of a product of flag varieties. This was already encountered in the proof of Lemma \ref{l:9}.  The following example is typical.

\begin{ex}
We show that the dimension vector $(5,5,5,5,13;14)$ is not dense. The reader can check that the same argument works for dimension vectors of the form $(k,k,k,k,3k-2; 3k-1)$. Let $U_1, U_2, U_3, U_4, W$ be general linear subspaces of a $14$-dimensional vector space $X$, where $\dim U_i = 5$ for $1 \leq i \leq 4$ and $\dim W= 13$. We first reduce checking the density to checking the density of a configuration where the ambient vector space has dimension 10.

Let $V= U_1 + U_2$. Let $U_i'= U_i \cap V$ for $i=3,4$ and let $W_1 = W \cap V$. Finally, let $$W_2 = ((U_3 \cap W) + (U_4 \cap W)) \cap V.$$ Then there is a natural restriction morphism $$f: \stab(U_1, U_2, U_3, U_4,W; X) \rightarrow \stab(U_1, U_2, U_3', U_4', W_1, W_2; V).$$ The new twist in this case is that $W_2 \subset W_1$, hence the new data is not a point of a product of Grassmannians, but of a product of partial flag varieties. Since the expected dimensions of the stabilizers are equal and both configurations can be taken to be generic, the density of one configuration is equivalent to the density of the other configuration by Lemma \ref{l:ml}.

Next we reduce the problem to one where the ambient dimension is 6. Set $V' = U_3'+U_4'+W_2$. Let $U_i' = U_i \cap V'$ for $i=1,2$, let $W_1' = W_1 \cap V'$ and $$W_3 = ((U_1 \cap W_1) + (U_2 \cap W_1)) \cap V'.$$ There is a natural restriction morphism
$$f: \stab(U_1, U_2, U_3', U_4', W_1, W_2; V) \rightarrow \stab(U_1', U_2', U_3', U_4', W_1', W_2, W_3; V').$$ Again both configurations are generic subject to the restriction $W_2 \subset W_1$ and $W_2, W_3 \subset W_1'$ and by Lemma \ref{l:ml} the density of one is equivalent to the density of the other. The new twist at this stage is that both $W_2, W_3 \subset W_1'$. Hence, this is a configuration parameterized by a subvariety of a product of Grassmannians and flag varieties defined by imposing some linear conditions on the vector spaces. At this stage, it is clear that the configuration is not dense since the configuration $(1,1,4,4;5)$ exists as a subconfiguration by taking $T_1 = (U_1'+U_3')\cap W_1'$, $T_2 = (U_2' + U_4') \cap W_1'$.
\end{ex}

We can speculate that whenever a dimension vector is not dense, there is always a configuration of vector spaces obtained by repeatedly taking spans, intersections and projections that gives a configuration which is trivially sparse.  Based on the previous example and to get a better inductive set up, the following generalization of Problem \ref{q:main} may be more natural.

\begin{pro}
Let $X$ be a subvariety of a product of flag varieties $\prod_{i=1}^k F(d_{i,1}, \dots, d_{i, j_i}; n)$ obtained by imposing linear relations on  the vector spaces. When does the diagonal action of $\pgl{n}$ have a dense orbit on $X$?
\end{pro}


\bibliographystyle{plain}

\end{document}